\begin{document}

\title{
U-match factorization: sparse homological algebra, lazy cycle representatives, and dualities in persistent (co)homology
}

\author{Haibin Hang\footnote{University of Delaware} \and Chad Giust$\text{i}^\ast$ \and Lori Ziegelmeier\footnote{Macalester College} \and Gregory Henselman-Petrusek\footnote{University of Oxford, correspondence: henselmanpet@maths.ox.ac.uk}}

\date{\today}
\maketitle

\newcommand{\cols}{\text{\sc{Cols}}}
\newcommand{\rows}{\text{\sc{Rows}}}
\newcommand{\svec}{e} 

\newcommand{\allr}{\upto{m}} 
\newcommand{\allc}{\upto{n}} 


\newcommand{\field}{\mathbb K}
\newcommand{\binary}{ {\mathbb F}_2}

\newcommand{\simpcomp}{K}
\newcommand{\finitepoints}{X}
\newcommand{\Simplices}[0]{S}
\newcommand{\Chains}{C}
\newcommand{\Chaincomplex}{\mathcal{C}}
\newcommand{\coeffs}{G}
\newcommand{\boundaryop}{\partial}
\newcommand{\simplexdim}{n}
\newcommand{\simplex}{\sigma}
\newcommand{\Cycles}[0]{Z}
\newcommand{\Boundaries}[0]{B}
\newcommand{\Homologies}[0]{H}
\newcommand{\VectorSpace}[0]{V}
\newcommand{\Basis}{\mathcal{B}}


\newcommand{\chaina}{x}
\newcommand{\chainb}{y}

\newcommand{\mcob}{E}
\newcommand{\mbasisR}[1]{{\mathscr E}_{#1}}
\newcommand{\mbasisL}[1]{{\mathscr E}^*_{#1}} 
\newcommand{\rreduction}{Y}
\newcommand{\rreductionAlt}{\mathfrak Y}

\newcommand{\fchx}[1]{F_{#1} \Chains}
\newcommand{\Birth}{\mathrm{Birth}}
\newcommand{\Death}{\mathrm{Death}}
\newcommand{\del}{\mathrm{clear}} 
\newcommand{\low}{\mathrm{low}}
\renewcommand{\pmod}{\mathscr{P}}
\newcommand{\chaindim}[1]{\epsilon_{#1}}  
\newcommand{\fchaindim}[2]{\epsilon_{#2}^{(#1)} }  
\newcommand{\fchaindimRel}[2]{  \theta_{#2}^{(#1)}  }

\newcommand{\fparammin}{0} 
\newcommand{\fparammax}{N} 
\newcommand{\bornby}[1]{^{(#1)}}
\newcommand{\fparam}{p}

\newcommand{\filta}{F}
\newcommand{\filtb}{G}

\newcommand{\relrange}[2]{\Theta_{#2}^{(#1)}}


\newcommand{\DEF}{\mathrm{def}}
\newcommand{\VAL}{\mathrm{val}}

\newcommand{\lata}{\mathfrak M}
\newcommand{\latb}{\mathfrak D}
\newcommand{\subspacelattice}{\mathfrak W}


\renewcommand{\Im}{\mathrm{Im}}
\newcommand{\Ker}{\mathrm{Ker}}
\newcommand{\spanbrac}[1]{\langle #1 \rangle}

\newcommand{\diagmat}{\mathscr D}

\newcommand{\basisa}{A}
\newcommand{\mbasis}{B}

\newcommand{\IMatch}{\mu}
\newcommand{\mRow}{\mathrm{row}}
\newcommand{\mCol}{\mathrm{col}}

\newcommand{\id}{I} 
\newcommand{\pcols}{ \kappa } 
\newcommand{\prows}{ \rho } 
\newcommand{\nprows}{ \bar{\prows} } 
\newcommand{\npcols}{ \bar{\pcols}} 
\newcommand{\prow}{\rho} 
\newcommand{\pcol}{\kappa} 
\newcommand{\prowp}{\prow^*} 
\newcommand{\pcolp}{\pcol^*} 
\newcommand{\nprow}{\bar{\prow} } 
\newcommand{\npcol}{ \bar{\pcol} } 
\newcommand{\npiv}{k} 
\newcommand{\RR}{ \mathscr{R} } 
\newcommand{\Ri}{  \RR^{-1} } 
\newcommand{\Rrr}{\RR_{\prows \prows}}
\newcommand{\Rirr}{(\Rrr)^{-1}}
\newcommand{\CC}{ \mathscr{C} } 
\newcommand{\Ccc}{ \CC_{\pcols \pcols} } 
\newcommand{\Ci}{ \CC^{-1} } 
\newcommand{\MM}{M} 
\newcommand{\Mrk}{ \MM_{\prows \pcols}} 
\newcommand{\Mirk}{ \MM_{\prows \pcols}^{-1}} 
\newcommand{\DD}{D} 
\newcommand{\Drk}{\DD_{\prows \pcols}}
\newcommand{\Drki}{\Drk^{-1}}
\renewcommand{\AA}{\mathscr{A}}
\newcommand{\Ai}{\AA^{-1}}
\newcommand{\matchscale}{\mu}

\newcommand{\row}{\text{\sc Row}} 
\newcommand{\col}{\text{\sc Col}} 

\newcommand{\upto}[1]{ \mathbf{#1} }

\newcommand{\COB}{COMB} 

\begin{abstract}

Persistent homology is a leading tool in topological data analysis (TDA).  Many problems in TDA can be solved via homological -- and indeed, linear -- algebra.  However, matrices in this domain are typically large, with rows and columns numbered in billions. Low-rank approximation of such arrays typically destroys essential information; thus, new mathematical and computational paradigms are needed for  very large, sparse matrices.  

We present the U-match matrix factorization scheme to address this challenge. U-match has two desirable features.  First, it admits a compressed storage format that reduces the number of nonzero entries held in computer memory by one or more orders of magnitude over other common factorizations.  Second, it permits direct solution of diverse problems in linear and homological algebra, without decompressing matrices stored in memory.  These problems include look-up and retrieval of rows and columns; evaluation of birth/death times, and extraction of generators in persistent (co)homology; and, calculation of bases for boundary and cycle subspaces of filtered chain complexes.  Such bases are key to unlocking a range of other topological techniques for use in TDA, and U-match factorization is designed to make such calculations broadly accessible to practitioners.  

As an application, we show that individual cycle representatives in persistent homology can be retrieved at time and memory costs orders of magnitude below current state of the art, via global duality.  Moreover, the algebraic machinery needed to achieve this computation already exists in many modern solvers.

\tb{Key words} persistent homology, topological data analysis, matrix reduction, algorithm engineering

\end{abstract}

\newcommand{\EE}{E}

\newcommand{\homology}{\mathrm{H}}
\newcommand{\betti}{\beta}
\newcommand{\topspace}{X}
\newcommand{\hdegree}{n}
\newcommand{\barcode}{\mathrm{Barcode}}
\newcommand{\cell}{\sigma}

\newcommand{\intmax}{m}

\newcommand{\pmodsummanda}{U}
\newcommand{\pmodsummands}{\mathscr U}
\newcommand{\pmodsummandaa}{W}
\newcommand{\pmodsummandss}{\mathscr W}

\newcommand{\cyclea}{z}
\newcommand{\cocyclea}{z^*}

\newcommand{\fparamb}{q}

\newcommand{\Ralt}{\mathfrak R}
\newcommand{\Valt}{\mathfrak V}

\section{Introduction}

Persistent homology provides topological summaries of data across a hierarchy of scales.  To obtain such a summary, one typically executes a series of refinements: first, data is transformed  from a raw format into a filtered chain complex; second, the complex is transformed into a sequence of vector spaces and linear maps called a persistence module; third and finally, the module is transformed into a  multiset of intervals called a barcode.

Barcodes find diverse applications in data science, but information is lost at each stage of refinement.  
For example, to associate an individual interval with a specific topological feature, one must generally have knowledge of the persistence module.  
To implement an algorithm that determines the range of filtration parameters where an arbitrary cycle, $\zeta$  represents a nontrivial homology class $[\zeta]$, one must generally know something about the underlying filtered complex.  

These examples underscore the point that lower-level objects  -- persistence modules and filtered chain complexes -- are essential to developing full-fledged models of scientific data.  However, two challenges  limit our ability to work with such objects effectively:

\begin{enumerate}
    \item \underline{\smash{Obtaining}} linear bases.  Linear algebra can solve a variety of important problems in homological persistence (c.f.\ \S\ref{sec:exampleproblems}), however few high-performance persistent homology libraries return bases for a persistence module.  Fewer still give  access to bases for filtered chain complexes, and none, to our knowledge, gives convenient access to bases for subspaces of (relative) (co)cycles and (co)boundaries.  Without such data, computer algebra has no basis on which to operate.
    \item \underline{\smash{Storing and using}} linear bases to solve problems.  Chain complexes used in TDA  have cycle and boundary spaces of very high dimension --  often in the billions or hundreds of billions.  It is impractical to store a column basis for such a space in the form of a sparse CSC or triplet matrix.  Thus, new data structures are needed. 
\end{enumerate}

We address these challenges by a lazy\footnote{A \emph{lazy} method relies on efficient just-in-time computations to avoid storage of excessively large data, such as boundary matrices for chain complexes.} strategy that combines two fundamental ingredients in modern persistent homology computation,             \emph{algebraic symmetry}, expressed by a notion of global duality (Theorem \ref{thm:globalduality}), and           \emph{computational asymmetry}, specifically a large disparity in computation time for persistent homology versus cohomology.
 The strategy uses a matrix decomposition scheme which we call U-match to simplify much of the machinery involved.
    
    Before we begin, we will briefly unpack some of the essential ideas; the reader may wish to refer to \cite{de2011dualities} for further background detail.
  \medskip

\subsection{Persistence reviewed}  
  In order to apply persistent homology, data is encoded as a chain complex along with a filtration that describes its hierarchical structure,
 \begin{align*}
        \fchx{} :
            \quad
            0
            =
            \fchx{\fparammin}
            \subseteq
            \cdots
            \subseteq
            \fchx{\fparammax}
            =
            \Chains            
            .
    \end{align*}
   Passing to homology with coefficients in a fixed field $\field$, we obtain a sequence of vector spaces and linear maps induced by the inclusions
    \begin{align*}
        \pmod_\ast( \fchx{} ):
        \quad
        \Homologies_\ast( \fchx{\fparammin} )
        \to
        \cdots
        \to
        \Homologies_\ast( \fchx{\fparammax} )
    \end{align*}
This sequence is called the \emph{(graded) homological persistence module} for $\fchx{}$, or simply \emph{graded PH module} for short.

It is standard to view the graded PH module as a  quiver representation of the directed graph $\fparammin \to \cdots \to \fparammax$.  A celebrated theorem of Gabriel \cite{gabriel1972unzerlegbare} states that every such representation decomposes as a direct sum of indecomposable pure-graded\footnote{By definition, an element $v$ of a graded vector space $V = \bigoplus_p V^p$ is \emph{pure-graded} if $v \in V^p$ for some $p$.} sub-representations.  Concretely, this means that for each homological grading $\simplexdim$
    \begin{align}
        \pmod_\simplexdim( \fchx{} )
        =
        \bigoplus_{V \in \mathscr{V}} V
        \label{eq:pmoddecomposition}
    \end{align}
for some family of submodules $\mathscr{V}$, where each $V = (V^\fparammin \to \cdots \to V^\fparammax) \in \mathscr{V}$ is isomorphic to an \emph{interval representation} of the form $$0\to  \cdots \to  0 \to \field \xrightarrow[]{\cong} \cdots \xrightarrow{\cong} \field \to 0 \rightarrow \cdots \to 0.$$  The \emph{support} of a nonzero  interval representation $V$ is $\supp(V)~=~\{~i~:~V^i~\neq~0\}$,  commonly written as a half-open interval $[a,b)$, where $a = \min \supp(V)$ is the index such that $V^a \neq 0$ and $V^{a-1}=0$; and $b = 1+\max \supp(V)$ is the index such that $V^b = 0$ and $V^{b-1} \neq 0$.  

\medskip

Three related objects derived from this decomposition are paramount to the present work. The first is the \emph{barcode} of Equation \eqref{eq:pmoddecomposition}, the indexed family of half-open intervals $(\supp(V))_{V \in \mathscr{V}}$.  It can be shown that all interval decompositions of a given persistence module yield the same barcode in each grading, up to re-indexing.  Thus  $\pmod_\ast( \fchx{} )$ uniquely determines a \emph{multiset} of half-open intervals, called the barcode of $\pmod_\ast(\fchx{} )$, and in each dimension $\simplexdim$ we have a dimension-$\simplexdim$ homological barcode of $\fchx{}$. Barcodes are among the leading shape descriptors used in TDA.  They are complete invariants of the associated sequence of homology maps, meaning that they determine the sequence up to isomorphism.   They are efficient to store in memory, represent visually, and there are a variety of methods for vectorizing them for statistical or machine learning applications.

The second, containing finer information, is a \emph{persistent homology (PH) cycle basis}. This is a subset $T \su  \Cycles \su \Chains$ of the cycle space $\Cycles$ containing one pure-graded cycle representative for each interval in the barcode.  It can be regarded as a solution to the inverse problem of mapping bars back to features of the space itself; it is strictly more informative than the barcode, but (unlike the barcode) it is not uniquely defined.
            
The third and final object, a superset of the PH cycle basis $T \su T'$, is a linear basis for $\Cycles$ that contains pure-graded bases for both the cycles and the boundaries of each space in the filtration.  So far as we are aware no name has been assigned to such a basis, and we will refer to it as a \emph{saecular basis}, c.f. \cite{saecular}. This is the most informative of the three and can be used to solve a wide range of problems, but like the PH cycle basis, it is not unique. 
        
While standard persistent homology occupies a central role in much of TDA literature, there are several natural variants that provide utility in understanding data. These variants are  obtained by replacing  $\Homologies_\ast(\Chains)$ with relative homology $\Homologies_\ast(\Chains, \filta_\fparam \Chains)$, cohomology    $\Homologies^\ast(\Chains) $, or relative cohomology $\Homologies^\ast(\Chains, \filta_\fparam \Chains),$ and deriving analogous sequences of vector spaces and maps. The barcode of any one of these sequences uniquely determines the others, a fact known as global duality for barcodes \cite{de2011dualities}. Of particular interest will be the \emph{persistent relative cohomology (PrcH) cocycle basis}, obtained from the interval decomposition of the relative cohomological persistence module for $\Homologies^\simplexdim(\Chains, \filta_\fparam \Chains)$.
    
\medskip  

\subsection{Algebraic symmetry (global duality)}
  \emph{Algebraic symmetry}, the first of our two ingredients, refers to the notion of \emph{global duality} introduced in \cite{de2011dualities}.  We give a brief description here, and the reader is referred to \cite{de2011dualities} for further details.
  
 Before we begin, we need to set some notation. To avoid an explosion of subscripts in the paper, we use square braces to index individual elements in a matrix $\DD$ or vector $v$, e.g. $\DD[i,j]$ or $v[i]$, and denote by $\row_i(\DD)$ and $\col_i(\DD)$ the $i$th row or column of a matrix $\DD$, respectively.
 
Let $\fchx{}$ be a filtered chain complex, and make the simplifying assumption that each filtration level differs from the last by the introduction of a single new basis vector; that is, $\filta_\fparam \Chains$ is generated by linearly independent chains $\{\cell_{1}, \ldots, \cell_\fparam\}.$ In this basis, the total boundary matrix $D \in \field^{\fparammax \times \fparammax}$ for $\fchx{}$ is given by the equation $\partial \cell_\fparam = \sum_{\fparamb < \fparam} \DD[\fparamb, \fparam] \cell_\fparamb$. 
We say that $\DD$
        is \emph{$(-1)$-graded}, in the sense that $\dim(\cell_i) = \dim(\cell_j)-1$ whenever $\DD[i,j] \neq 0.$
    
There is an important relationship between the PH and PrcH cycle bases for $\fchx{}$ and $\DD,$ given by the following theorem. The part of Theorem \ref{thm:globalduality} that applies to PH was proved in \cite{cohen2006vines}; the dual component for PrcH was introduced in \cite{de2011dualities}. A matrix with exactly one non-zero entry in each row and each column is a \emph{(generalized) matching matrix}\footnote{A \emph{matching matrix} is a generalized matching matrix with entries in $\{0,1\}$. We deal only with generalized matching matrices in this text and  will, by abuse of terminology, drop the qualifier ``generalized.''.}.
        \begin{theorem}[Global duality for generators, \cite{cohen2006vines, de2011dualities}]
        \label{thm:globalduality}
                Suppose that $\EE$ is a 0-graded invertible upper triangular matrix  and $\MM = \EE^{-1}  \DD \EE$ is a matching matrix.   Then,  
                \begin{enumerate}
                    \item The columns of $\EE$ contain a pure-graded PH cycle basis (for homology in each dimension)
                    \item The rows of $\EE^{-1}$ contain a pure-graded PrcH cocycle basis (for relative cohomology in each dimension).
                \end{enumerate}
            In greater detail, for each pair $(i,j)$ with $\MM[i,j] \neq 0$ and each  $k$ so that $\row_k(\MM) = 0$ and $\col_k(\MM) = 0$, let 
            \begin{align*}
                \tau_{ij} &= \col_i(\DD \EE) &
                \omega_{k} &= \col_k(\EE) 
                \\
                \tau^{ij} &= \row_j(\EE^{-1}\DD) &
                \omega^{k} &= \row_k(\EE^{-1}) 
            \end{align*}      
        
    Then,
        \begin{enumerate}
            \item Each $\tau_{ij}$ represents a pure-graded homology class that ``lives'' at each $t \in [i,j)$ in persistent homology; each $\omega_k$ represents a pure-graded homology class that lives at each $t \in [k, \infty)$.  The set of all $n$-cycles $\tau_{ij}$ together with all $n$-cycles $\omega_k$ forms a PH cycle basis.
            \item Each $\tau^{ij}$ represents a pure-graded relative cohomology class that ``lives'' at each  $t \in (i, j]$ in persistent relative cohomology; each $\omega^k$ represents a pure-graded relative cohomology class that lives at each $t \in (-\infty,  k]$. The set of all relative $n$-cocycles $\tau^{ij}$ together with all relative $n$-cycles $\omega^k$ forms a PrcH cocycle basis.            
        \end{enumerate}
            
        \end{theorem}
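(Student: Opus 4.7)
The plan is to let the defining identity $DE = EM$ (and its companion $E^{-1}D = ME^{-1}$) do nearly all the work, deriving the primal and dual claims by parallel arguments.

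First I would establish the combinatorial structure of $M$. Upper triangularity of $E$ and $E^{-1}$ together with strict upper triangularity of $D$ force $M$ to be strictly upper triangular, so every $M[i,j]\neq 0$ has $i<j$. From $D^2 = 0$ we also get $M^2 = E^{-1}D^2 E = 0$; combined with the matching property this rules out any index serving simultaneously as a row pivot and a column pivot (such a coincidence would immediately produce a nonzero entry of $M^2$). Hence every index is exactly one of three types: unpaired (row and column of $M$ both zero), a row pivot (``birth''), or a column pivot (``death''). Pure-gradedness of every column of $E$ and row of $E^{-1}$ follows from the fact that $E$ is $0$-graded.

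For the primal claim, upper triangularity of $E$ makes $\{\col_1(E),\ldots,\col_p(E)\}$ a basis of $\fchx{p}$ for every $p$, so the columns of $E$ furnish a filtered basis of $\Chains$ in which the boundary takes the matching form $M$. For a pair $(i,j)$ with $M[i,j]\neq 0$, this exhibits $\col_i(E)$ as a pure-graded cycle in $\fchx{i}$ (column $i$ of $M$ vanishes since $i$ is a birth index, so $D\col_i(E) = E\col_i(M) = 0$), and $D\col_j(E) = M[i,j]\col_i(E)$ with $\col_j(E)\in \fchx{j}$ makes $\col_i(E)$ a boundary at time $j$. Earlier trivialization is excluded: writing any putative $c\in \fchx{p}$ with $Dc = \col_i(E)$ in the $E$-basis and using invertibility of $E$ forces some $l\leq p$ with $M[i,l]\neq 0$, but the unique such $l$ is $j$. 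For an unpaired $k$, $\col_k(E)\in \fchx{k}$ is a cycle that never becomes a boundary. Summing over pairs and unpaired indices yields a direct-sum decomposition of $\pmod_\ast(\fchx{})$ into interval modules with the claimed supports, and the cycles $\col_i(E), \col_k(E)$ constitute a PH cycle basis.

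The dual claim follows by mirroring through $E^{-1}D = M E^{-1}$. Reading row $k$: $\row_k(E^{-1})\cdot D = M[k,j]\,\row_j(E^{-1})$ if $(k,j)$ is a pair, and $0$ otherwise. Interpreting right-multiplication by $D$ as the coboundary operator on dual vectors, $\row_k(E^{-1})$ is a cocycle whenever row $k$ of $M$ vanishes (essential and column-pivot cases), while for a row-pivot $i$ the cochain $\row_j(E^{-1})$ is exhibited as a coboundary from filtration level $i$ onward. Upper triangularity of $E^{-1}$ ensures $\row_k(E^{-1})$ vanishes on $\cell_l$ for $l<k$, so rows of $E^{-1}$ descend to well-defined classes in the relative quotients $\Chains/\fchx{p}\Chains$ for $p<k$; running the primal birth/death analysis in the reversed filtration direction produces the half-open intervals $(i,j]$ and $(-\infty,k]$.

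The main obstacle, as I see it, is the bookkeeping for the dual half: relative cohomology reverses the direction of the filtration, which flips the half-open intervals and swaps the roles of $i$ and $j$ in the ``alive at time $t$'' condition, and one must be careful with the convention that right-multiplication by $D$ realizes $\delta$ rather than $\partial$ on dual vectors. Once the filtered basis $\{\col_l(E)\}$ (primal) and $\{\row_l(E^{-1})\}$ (dual) are in hand and the matching structure of $M$ has been classified, everything else reduces to direct manipulation of $DE=EM$, $D^2=0$, and invertibility of $E$.
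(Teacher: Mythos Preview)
The paper does not prove this theorem; it is quoted as background from \cite{cohen2006vines, de2011dualities}. Your argument is correct and is essentially the standard one found in those references: use $DE=EM$ to see that the columns of $E$ form a filtered basis of $\Chains$ in which $\partial$ acts by the matching $M$, classify indices into birth/death/essential via $M^2=0$, and read off the interval supports directly. The dual half via $E^{-1}D=ME^{-1}$ and the reversed relative filtration is likewise the expected route. There is nothing to compare against in the paper itself.

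One point worth noting: the statement as printed sets $\tau_{ij}=\col_i(DE)$, but your own $M^2=0$ argument shows that a row-pivot index $i$ cannot be a column pivot, whence $\col_i(DE)=E\cdot\col_i(M)=0$. The cycle one actually wants is $\col_j(DE)=M[i,j]\cdot\col_i(E)$, which is the scalar multiple of $\col_i(E)$ you work with throughout. You sidestepped this by never invoking the symbol $\tau_{ij}$ and going straight to $\col_i(E)$, so your argument is unaffected; but it would strengthen the write-up to remark explicitly that $\col_j(DE)$ and $\col_i(E)$ differ only by the nonzero scalar $M[i,j]$ and hence represent the same homology class.
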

        
        To the best of our knowledge, there exists no technical term for the matrix $E$ described in Theorem \ref{thm:globalduality}.  For concreteness we give it a name: a \emph{(filtered, pure-graded) Jordan basis} for the total differential boundary matrix $\DD$ is an invertible upper-triangular 0-graded matrix $\EE$ such that $\EE^{-1} \DD \EE$ is a  matching matrix.

        \begin{remark}
        The columns of $\EE$ and rows of $\EE^{-1}$ are useful for much more than computing PH and PrcH basis:
            \begin{enumerate}
                \item One can extract bases for related constructions called persistent relative homology and  persistent cohomology.
                \item One can compute pure-graded bases for the space of (co)cycles and (co)boundaries of $\filta_\fparam \Chains$ for each $\fparam$.
                \item Given an $\simplexdim$-dimensional boundary $b$, one can efficiently calculate the smallest $\fparam$ such that $b \in \filta_\fparam\Boundaries$, the space of boundaries.  More importantly, one can compute an explicit bounding chain $c \in \fchx{p}$.
            \end{enumerate}
        \end{remark}

 \medskip
 
\subsection{Computational asymmetry}
\label{subsec:computationalassymetry}

 There are important scenarios in computation where the practitioner may achieve a predetermined goal in one of two ways: either by applying an algorithm to a matrix $\DD$, or by applying the same algorithm to the anti-transposed\footnote{The antitranspose of $\DD$ is obtained by transposing, then reversing the order of rows and columns. Thus, for example, the first row of $\DD^\perp$ corresponds to the last column of $\DD$.} matrix $\DD^\perp$.
 For example, to compute the rank of $\DD$, one could apply Gaussian-Jordan column reduction either to $\DD$ or to $\DD^\perp$; both methods yield the rank of the matrix.
However, there are cases where one route consumes significantly more time and memory than the other.  We refer to this phenomenon as \emph{computational asymmetry}.
    
Computational asymmetry is endemic to one of the most important algorithms in topological analysis, the so-called \emph{standard algorithm} for persistent homology.  This algorithm is a constrained form of Gaussian-Jordan elimination, and it yields a triplet of matrices $(R, D, V)$ such that  $R = DV$ where 

    \begin{enumerate}
        \item \label{item:rdv1} $V$ is invertible and upper triangular, and 
        \item \label{item:rdv2} $R$ is ``reduced'' in the sense that every nonzero column has its lowest nonzero entry in a distinct row.  Formally, let $\low_R(c): = \max \{ r : \col_c(R)[r] \neq 0 \}$ denote the greatest row index $r$ such that column $c$ has a nonzero entry in row $r$.  Then  $\low_R(c) \neq \low_R( \tilde c )$ for any two distinct nonzero columns $c \neq \tilde c$.
    \end{enumerate}
     This type of decomposition is typically called ``$R = DV$'', however to avoid conflicts in notation we will refer to the triple $(R, D, V)$ as a \emph{right-reduction}.  
    
    Right-reduction enables the computation of pure-graded PH cycle bases and PrcH cocycle bases via the following result of \cite{cohen2006vines, de2011dualities}.  For convenience, given a right-reduction $\rreduction$, write $\mbasisR{\rreduction}$ for the invertible upper triangular matrix such that 
    \begin{align*}
    \col_j( \mbasisR{\rreduction} ) 
    = 
    \begin{cases}
        \col_i(R) & j = \low_R(i) \text{ for some } i \\
        \col_j(V) & else
    \end{cases}
    \end{align*}
    \begin{theorem}[Dual generators and right-reduction]
    \label{thm:globalRDV}
    Suppose that
    \begin{align*}
        \underbrace{R = D V}_{\rreduction}
         &&
        \underbrace{\Ralt = \DD^\perp \Valt}_{\rreductionAlt}
    \end{align*}
    are right-reductions.  Then $\mbasisR{\rreduction}$ and $(\mbasisR{\rreductionAlt}^{\perp})^{-1}$ are (filtered, graded) Jordan bases of $\DD$.  In particular, Theorem \ref{thm:globalduality} applies to both $\mbasisR{\rreduction}$ and $(\mbasisR{\rreductionAlt}^{\perp})^{-1}.$
    \end{theorem}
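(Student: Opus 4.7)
The plan is to verify directly that $E := \mbasisR{\rreduction}$ is a Jordan basis for $\DD$, then deduce the statement for $(\mbasisR{\rreductionAlt}^\perp)^{-1}$ by applying the first result to $\DD^\perp$ and transporting the conclusion back through anti-transposition.

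For $E$ itself, I would work column by column. The columns of $E$ are either columns of $V$ or, in the ``pivot row'' case $j = \low_R(i)$, the column $\col_i(R)$, whose lowest nonzero entry sits in row $j$ by definition of $\low_R$. Together with upper-triangularity of $V$, this makes $E$ upper triangular with nonzero diagonal, hence invertible. The standard reduction produces a 0-graded $V$ because column additions are performed only between columns whose $R$-entries share a lowest row, and since $\DD$ is $(-1)$-graded the corresponding simplices must share a common dimension. Thus $R = \DD V$ is $(-1)$-graded, and the column substitution defining $E$ preserves the grading in both cases.

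The matching matrix property then follows from the two structural identities $\DD^2 = 0$ and $R = \DD V$: column $j$ of $\DD E$ equals $\DD^2 \col_i(V) = 0$ when $j$ is a pivot row, and $\DD \col_j(V) = \col_j(R)$ otherwise. In the nontrivial case, $\col_j(R) = \col_{\low_R(j)}(E)$ by construction of $E$, so $E^{-1} \col_j(R)$ equals the standard basis vector $e_{\low_R(j)}$. Hence $E^{-1} \DD E$ has at most one nonzero entry per column, and injectivity of $\low_R$ on nonzero columns gives at most one nonzero per row, which is precisely the matching property.

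For the second claim, apply the first result to $\DD^\perp$ to obtain $E'^{-1} \DD^\perp E' = M'$ with $E' := \mbasisR{\rreductionAlt}$ and $M'$ a matching matrix. Anti-transposing both sides and using the identities $(AB)^\perp = B^\perp A^\perp$ and $(A^{-1})^\perp = (A^\perp)^{-1}$ gives $E'^\perp \DD (E'^\perp)^{-1} = M'^\perp$; setting $F := (E'^\perp)^{-1}$, this rearranges to $F^{-1} \DD F = M'^\perp$, which is again a matching matrix. Upper-triangularity and invertibility of $F$ follow because anti-transposition exchanges entries symmetrically across the anti-diagonal and commutes with inversion. The main obstacle is the grading bookkeeping: one must verify that 0-gradedness of $E'$ with respect to the cochain grading on $\DD^\perp$ yields 0-gradedness of $F$ with respect to the chain grading on $\DD$ after the combined anti-transposition and inversion. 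Once this index dictionary is set up cleanly, each individual step reduces to a direct computation.
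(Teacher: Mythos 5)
The paper states Theorem~\ref{thm:globalRDV} without proof, attributing it to \cite{cohen2006vines, de2011dualities}, so there is no in-paper argument to compare against. Your reconstruction captures the expected line of reasoning: column-by-column analysis of $E := \mbasisR{\rreduction}$, invoking $\DD^2 = 0$ for columns indexed by pivot rows, $\DD\col_j(V) = \col_j(R)$ otherwise, and injectivity of $\low_R$ to secure the matching property; the anti-transposition algebra for the second claim is also correct.

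Two points need tightening. First, the theorem as stated does not assume $V$ is $0$-graded, yet your proof requires it — and so does the conclusion, since a Jordan basis is by definition $0$-graded and $\mbasisR{\rreduction}$ inherits columns of $V$. You supply this by appealing to the behavior of the standard algorithm, but the hypothesis of the theorem allows arbitrary right-reductions. The clean repair is either to treat $0$-gradedness of $V$ as an implicit hypothesis (note that the paper makes the corresponding hypothesis explicit in Theorem~\ref{thm:jordanfromumatch}), or to observe that since $\DD$ is $(-1)$-graded, zeroing the off-grade entries of $V$ does not change $R = \DD V$, so one may pass to a $0$-graded $V$ without loss. Second, you explicitly defer the grading verification for $F = (\mbasisR{\rreductionAlt}^\perp)^{-1}$, and this is more than routine bookkeeping: anti-transposition reverses the dimension ordering, so one must check carefully that $0$-gradedness of $\mbasisR{\rreductionAlt}$ relative to the grading on $\DD^\perp$ passes through anti-transposition and inversion to give $0$-gradedness of $F$ relative to the original grading on $\DD$. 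Until that dictionary is spelled out, the second half of the argument is incomplete rather than merely terse.
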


In light of Theorem \ref{thm:globalRDV}, the practitioner may choose to compute persistent homology in one of two ways: either apply the standard algorithm to $\DD$, or apply it to $\DD^\perp$.  However,  the former consistently consumes \underline{\smash{orders of magnitude more time and memory}} than the latter, in many real world applications.\footnote{In particular, this asymmetry arises in applications of the standard algorithm to filtered clique complexes \cite{de2011dualities, otter2017roadmap, bauer2017phat}; the asymmetry is less pronounced for other topological models, e.g.\ cubical complexes \cite{bauer2017phat}.    These results are corroborated by experiments in the preset work, c.f. \S\ref{sec:experiments}.}  This is the  sense in which calculating  persistent homology via the standard algorithm is computationally asymmetric.  

It is important to note that {barcodes} for filtered complexes can be extracted from either $R$ or $\Ralt$ with very little effort; in fact, for this computation one only needs to know one of  two partially defined functions: $\low_R$ or $\low_\Ralt$.  Either of these can be found by visual inspection of the sparsity pattern of $R$ or $\Ralt$.  In practice, many barcode solvers also ``throw away'' 
the columns of $V$ or $\Valt$ that correspond to zero columns of $R$, since these lie effectively outside the domain of the $\low{}$ function.
Discarding these columns from $V$ as soon as they are computed can save a tremendous amount of computer memory, c.f. \S\ref{sec:experiments}.   

As compared to the problem of computing a barcode, the problem of computing {Jordan bases} is 
highly sensitive to the choice of $R = DV$ versus $\Ralt = \DD^\perp \Valt$ decomposition.  On the one hand, extracting a Jordan basis $\mbasisR{\rreduction}$ from $\rreduction$ is primarily a matter of reading-off some columns of $R$ and $V$. 
On the other hand,  extracting a Jordan basis $(\mbasisR{\rreductionAlt}^{\perp})^{-1}$ from $\rreductionAlt$  involves the much more intensive process of inverting the anti-transpose of a large matrix.   

Thus the practitioner faces not one, but {two} significant  asymmetries:(i) it is  easier to compute $\rreductionAlt$ than $\rreduction$, and (ii)  it is  easier to extract a Jordan basis from $\rreduction$ than from $\rreductionAlt$.  The latter poses no obstacle for barcodes, but interlocks with the former to make Jordan bases very challenging to compute, in general.  

This disparity 
has been reflected both in algorithm development and in software implementation over the past decade.  While efficient barcode computation has surged ahead in both arenas (see \S\ref{sec:literature}), few software packages to this day even offer the option to return a PH cycle basis.  

This dearth of basis solvers cannot be attributed entirely to the difficulty of computing a basis, however.  An equally important question concerns what, precisely, the user might be able to do with such a basis, if they had one.  Barcodes require little memory to store, and are easy to analyze on essentially any computer platform.  Bases become interesting only when paired with a computer system that can perform sparse algebraic operations (matrix multiplication, manipulation of sparse vectors, etc.).  Most of the established systems for sparse linear algebra over non-floating point fields today are ill-suited to this task, due partly to a conflict of data structures;  standard software libraries generally use CSR, CSM, and triplet format, while high-performance persistence solvers rely on specialized ``lazy'' structures, as we will discuss in later sections.

The initiative to bring algebraic topology to data science is largely an initiative to bring \underline{\smash{linear homological algebra}} to data science.  Moreover, essentially all branches of computer linear algebra are about bases: how to compute them, how to work with them, etc.  To bring topological data analysis to fruition, therefore, the scientific community must   \underline{both} overcome the computational asymmetry described above, \underline{and} develop new algorithms for sparse linear algebra to  take advantage of modern, lazy data structures.

    \medskip
     
     \subsection{Lazy global duality}
Lazy methods have become key to modern methods of working with persistent Jordan bases, since $\EE$ has a tremendous number of nonzero entries, in general.
The archetypal example of a lazy method in this field is the computational technique introduced in \cite{bauer2019ripser}, which does not store the rows of a reduced matrix in memory, but rather re-constructs them each time they are needed. 
    At the cost of extra computation, such lazy methods circumvent some of the most challenging problems in computation for TDA, such as sparse matrices that are too large to store in memory.
    
    At present, most lazy methods focus on column access to  $(\EE^{-1})^\perp$.    
    The present work draws motivation, in part, from the problem of developing 
    lazy evaluation methods for both rows and columns of $\EE$ and $\EE^{-1}$.  
    
    \medskip
    
    \section{Contributions}
    
The primary contribution of this paper is a suite of \emph{lazy global duality} methods for computational TDA.
These methods grow from a novel,   memory-efficient sparse matrix factorization scheme, called \emph{U-match decomposition}. Once computed, a U-match decomposition of $\DD$ allows the user to retrieve persistent cycle \underline{and} cocycle representatives in a lazy fashion, using only a small number of algebraic operations.  

The  U-match decomposition algorithm that we present here performs only one  matrix reduction, and stores only one upper-triangular array in memory; by contrast, most state of the art primal-dual algorithms to compute PH generators currently  use two reductions and save two upper-triangular arrays.  In experiments, therefore, we  find that U-match decomposition can approximately halve time and memory costs for  generator computations, even after accounting for some dramatic performance improvements recently reported in \cite{vcufar2020ripserer}.  Moreover, the upper triangular array used in this decomposition happens to be computed as a byproduct by most state of the art persistent cohomology solvers; thus, the  approach is suitable for broad-scale implementation.

The U-match factorization scheme also permits lazy solution to a range of highly versatile problems in persistent homology, e.g. determining the filtration level at which two cycles become homologous.  These problems demonstrate, concretely, that the U-match scheme permits efficient access to information about a filtered complex which cannot be recovered from PH generators alone.

The  scheme has been implemented in the forthcoming software library \emph{ExHACT} \cite{ExHACT}.  This implementation has several important properties. Like the algorithm deployed in \emph{Eirene} \cite{henselman2016matroid},  and another which was recently (independently) proposed in \cite{chacholski2020algorithmic}, our method performs only row reductions on the boundary matrices in each dimension. However, unlike \cite{henselman2016matroid} and \cite{chacholski2020algorithmic}, the new approach is ``left-looking''\footnote{Within  the context of sparse matrix algorithms, the term \emph{left-looking} refers roughly to processes that eliminate nonzero entries of a column $c$ only when $c$ is selected; by contrast, \emph{right-looking} algorithms eliminate entries in $c$ whenever certain other columns appear in a sequence.  Left-looking algorithms are more compatible with compressed sparse row and column data structures, and are therefore preferred in many applications.}
; this yields substantial advantages in terms of sparse matrix manipulation and storage. 

The implementation incorporates a number of performance-enhancing techniques, some of which  apply to  U-match decomposition in general.  These include  compression methods that discard  data associated to non-pivot rows and  short-circuiting techniques that both save algebraic operations and sparsify output arrays.  The implementation also deploys several methods specific to persistence computation, e.g. the so-called ``twist" \cite{CKPersistent11} or ``clear-and-compress" technique \cite{BKRClear14, ZCComputing05}. The implementation also dovetails with   matrix compression schemes that have  proved highly successful in reducing time and memory cost of computation \cite{bauer2019ripser}.

Taken together, these methods represent an  important step toward realizing the promise of homological algebra as a general analytic tool for large-scale data -- especially in applications that focus on a small subset of features (e.g.\ the longest bar), and in topological models that demand a wider range of algebraic machinery (e.g. cellular sheaves). 
\section{Organization}

The paper is organized as follows. In \S\ref{sec:literature}, we review existing literature on PH computation, and in \S\ref{sec:notation} we recall some terminology and set notation. In \S\ref{sec:executivesummary}, we introduce the U-match decomposition of a matrix and discuss its basic properties and applications. In \S\ref{sec:overallblockstructure} we describe a set of \emph{inner identities} which play a fundamental role in simplifying the computation and application of U-match decomposition. In \S\ref{sec:lazyumatch}, we discuss lazy methods for U-match decomposition. In particular,  we develop a simple, flexible, and efficient scheme to store a minimal collection of compressed sparse upper triangular matrices, and detail how to use them to efficiently recover entries of other matrices of interest in persistence computations. Each of the saved matrices has only as many rows and columns as pivot elements in the decomposition -- potentially many fewer than $\DD$.  In \S\ref{sec:gloabl_duality_with_umatch}, we connect U-match decomposition directly back to persistent homology computation via global duality as characterized in Theorem \ref{thm:globalduality}, and  discuss lazy methods for global duality.  In \S\ref{sec:factorization_algorithms}, we give algorithms for computing U-match and discuss methods for optimization. Finally, in \S\ref{sec:experiments}, we present the results of some numerical experiments with our implementation, and in \S\ref{sec:conclusion} we summarize and discuss future directions.

Appendix \ref{sec:blockidentities} provides an expanded discussion of certain technical block matrix identities, and Appendix \ref{sec:asideonordertheory} briefly sketches an interesting connection with order theory.  Appendix \ref{sec:earlystopping} details a performance-enhancing technique for sparsificaiton and early stopping during U-match factorization generally, and Appendix \ref{sec:lazy_jordan_alt} describes how this method can be applied to homology computations, in particular.

\section{Related literature}
\label{sec:literature}

Our approach bears close connection to several standard methods in sparse linear algebra, such as LU decomposition. However it differs in several important respects:
    
\begin{enumerate}
    \item Order of rows and columns is centrally important to persistent homology computations, and must be preserved.  Thus, much of the main-stream machinery for optimal pivot ordering (e.g.\ in LU decomposition) does not apply.
    \item Exact numerical precision is required for homological rank calculations, so methods to control numerical error are unnecessary.
    \item Unlike  classical sparse matrix data structures such as CSC, CSR, triplet, etc.,  sparse boundary matrices often admit  methods to generate both rows \underline{and} columns efficiently.  Moreover, it is often more natural to index these arrays by names of cells in a cell complex than to index by integers.  Our   scheme is well suited to accommodate and leverage these unique properties.
\end{enumerate}

The scheme also bears close connection to existing factorization methods in persistent homology, such as $R = DV$.  We discuss this connection, and introduce a new, detailed treatment of several  matrix identities which are practically significant but have not been fully expanded in the current literature.

We will focus primarily on sequential approaches to persistent homology computation. Other, non-sequential approaches include the chunk algorithm \cite{BKRClear14},  spectral sequence procedures \cite{LSVspectral11, EHComputational10}, Morse-theoretic batch reduction \cite{HMM+Discrete14, HMM+Efficiency10, RWSTheory11, BLRandom13, GRW+Memoryar, henselman2016matroid, maria2019discrete, scaramuccia2020computing, DWComputing12}, distributed algorithms \cite{bauer2014distributed, morozov2020towards, lewis2015parallel},  GPU acceleration \cite{zhang2020gpu, hylton2019tuning}, streaming \cite{kerber2019barcodes}, and homotopy collapse \cite{botnan2015approximating, dey2019simba, boissonnat2018strong}.
 There are closely related techniques in matrix factorization and zigzag persistence \cite{milosavljevic2011zigzag, carlsson2019persistent, CSZigzag10}.
 
 The first algorithm to compute persistent homology was introduced and subsequently refined/expanded in  \cite{ELZTopological02, ZCComputing05, cohen2006vines}.  It is commonly known as the \emph{standard algorithm}, and constitutes a central pillar of persistent homology computation today, both in theory and applications.  
 It has become convention to express the output of the standard algorithm in terms of a so-called $R = DV$  decomposition described in the introduction.
 The underlying algebra concerning Smith normal form was expanded in \cite{skraba2013persistence}. 
 A corresponding treatment for persistent (relative) cohomology was developed in \cite{ MSVPersistent11, de2011dualities}, applying the standard algorithm to the anti-transpose of the differential matrix. There are a wide range of implementations, including \cite{
            bauer2019ripser, 
            bauer2014distributed, 
            bauer2017phat,
            vcufar2020ripserer,
            dey2014simpers,
            dey2019simba,
            fasyrtda,
            kaji2020cubical,
            lesnickrivet,
            maria2014gudhi,
            morozovdionysus,
            nandaperseus,
            deygicomplex,
            perryplex,
            TVAjavaPlex12,
            zhang2020gpu,
            zhang2019hypha}.    
 
 The cohomology algorithm is significantly faster for filtered clique complexes, empirically.
 This phenomenon been has widely replicated \cite{de2011dualities, otter2017roadmap}, and substantial work has been devoted to understanding this asymmetry; see for example \cite{zhang2019hypha, bauer2019ripser}.
 
 The barcodes produced by persistent homology and persistent (relative) cohomology are the same  \cite{de2011dualities}; a majority of existing state of the art solvers therefore employ variants of the cohomology algorithm.
 Existing applied and theoretical work seeks to leverage this asymmetry to accelerate computation of generators in homology.
 The Eirene library accomplishes this via a right-looking block reduction method which iteratively computes Schur complements and is therefore (in principle) agnostic to row versus column operations.  The Ripserer library accomplishes this by first identifying pivot elements via elementary row operations, thereby reducing the scope of column operations to pivot columns \cite{vcufar2020ripserer}; this offers impressive computational advantages, as non-pivot columns account for a disproportionate number of algebraic operations.
    Recently, a purely row-based algorithm to compute generators in PH, developed via categorical homotopy theory,   has been proposed in \cite{chacholski2020algorithmic}.
 These methods relate, in a loose conceptual sense, to several other acceleration techniques which leverage knowledge about order, sparsity, and linear dependence to reduce the number of algebraic operations needed to perform persistence computations \cite{ZCComputing05, BKRClear14, CKPersistent11, lampret2020chain, bauer2019ripser,  henselman2016matroid}.

The lazy regime developed in this work draws from and overlaps with these existing techniques to a great extent.  
It naturally incorporates the clear/compress/twist optimization and the short-circuiting technique associated with certain Morse vector fields, as  described in \S\ref{sec:shortcircuitph}.  The decomposition procedures presented in Algorithms \ref{alg_lrdec} and \ref{alg:revised_lrdec} are nearly identical, mathematically, to the cohomology algorithm, which applies the standard algorithm to the antitranspose of $\DD$; the primary functional difference lies only with the type of  information that is stored versus discarded on each iteration.  Moreover, the use of lazy data structures in these algorithms follows from the pioneering example of \cite{bauer2019ripser}, which was also a source of broader inspiration for this work.  Finally, many of the concepts introduced in our discussion have immediate analogs for $R = DV$ decomposition, which were largely worked out in \cite{de2011dualities}.

Nevertheless, our approach departs from existing techniques in the following particulars:

\begin{enumerate}
    
    \item While the use of row operations to compute PH generators has been developed explicitly via the machinery of matroids \cite{henselman2016matroid} and  homotopy \cite{chacholski2020algorithmic}, and implicitly via global duality \cite{de2011dualities}, the \underline{\smash{synthesis of global duality with lazy techniques}} is novel, to the best of our knowledge. 
    
    \item Unlike the right-looking methods implemented in Eirene \cite{henselman2016matroid} and proposed in \cite{chacholski2020algorithmic}, the U-match decomposition procedures described in this work (Algorithms \ref{alg_lrdec} and \ref{alg:revised_lrdec}) look left.  This has substantial significance for sparse matrix manipulation and storage.
    
    \item U-match decomposition itself presents an elegant framework to study filtered chain complexes which extends strictly beyond homological persistence.  A collection of illustrative examples appears in \S\ref{sec:lazy_global_duality}.

\end{enumerate}

\section{Notation and conventions}
\label{sec:notation}

In this section, we define notation and conventions used throughout this work. Write $\upto{m}$ for the ordered sequence $(1, \dots, m)$.  Throughout the text, $\field$ denotes a field.

Given matrix $\DD \in \field^{m \times n}$ and sequences $\sigma = (s_1, \ldots, s_p)$, $\tau = (t_1, \ldots, t_q)$ with $p \le m$, $q \le n$, we write $\DD_{\sigma \tau}$ for the matrix such that $\DD_{\sigma \tau}[i,j] = \DD[s_i, t_j]$.  That is,
\begin{align*}
    \DD_{\sigma \tau}
    \quad 
    =
    \quad
    \left [
    \begin{array}{ccc}
          \DD[s_1, t_1] & \cdots & \DD[s_1, t_q]   \\   
         \vdots & & \vdots \\
      \DD[s_q, t_1]    & \cdots &   \DD[s_p, t_q] \\ 
    \end{array}    
    \right ]
\end{align*}

In several cases, we wish to express that two matrices are equal up to a certain permutation of rows and columns.  In such cases we  use the symbol $\equiv$, together with row and column labels that indicate the permutation.  Thus, for example, we may write
\begin{align*}
    \begin{array}{l |cc|}
        \multicolumn{1}{c}{}& \multicolumn{1}{c}{} & \multicolumn{1}{c}{}  \\  \cline{2-3}
         & a & b   \\   
         & c & d \\ \cline{2-3}     
    \end{array}  
    \quad
    \equiv
    \quad
    \begin{array}{l |cc|}
        \multicolumn{1}{c}{}& \multicolumn{1}{c}{2} & \multicolumn{1}{c}{1}  \\  \cline{2-3}
         1 & b & a   \\   
         2 & d & c \\ \cline{2-3}     
    \end{array}      
    \quad
    \equiv
    \quad
    \begin{array}{l |cc|}
        \multicolumn{1}{c}{}& \multicolumn{1}{c}{1} & \multicolumn{1}{c}{2}  \\  \cline{2-3}
         2 & c & d   \\   
         1 & a & b \\ \cline{2-3}     
    \end{array}      
\end{align*}
If we omit labels on the rows or columns of a matrix $\DD \in \field^{m \times n}$, the implicit ordering is that of $\upto{m}$ and $\upto{n}.$

By a \emph{matching} between $S$ and $T$ we mean a partial matching on the directed bipartite graph with vertex set $(S,T)$.  In concrete terms, this means a subset $\emptyset\neq\IMatch \subseteq S \times T$ such that $(s,t) = (s', t')$ whenever  $s = s'$ or $t = t'$  for some   $(s,t) , (s', t') \in \IMatch$.

\section{U-match decomposition}
\label{sec:executivesummary}

A \emph{U-match decomposition} of a matrix $\DD \in \field^{m \times n}$ is a tuple $(\RR, \MM, \DD, \CC)$ such that 
    \begin{align}
    \RR \MM = \DD \CC
    \label{eq:umatchdef}
    \end{align}
where $\MM$ is a matching matrix and $\RR$ and $\CC$ are each upper unitriangular.  The choice of $\RR$ and $\CC$ as symbols is arbitrary, but it provides a useful mnemonic, since $\RR$ has an equal number of rows to $\DD$, and $\CC$ has an equal number of columns to $\DD$.

Throughout the discussion, we will identify $\DD$ with the linear map $\field^n \to \field^m$ given by left-multiplication with $\DD$.  Under this convention, the equation that defines a U-match decomposition corresponds to a commutative diagram 
\begin{equation}
\begin{tikzcd}
\field^n\arrow[r, "\MM"]\arrow[d, "\CC"] &\field^m\arrow[d, "\RR"]\\
\field^n\arrow[r, "\DD"]& \field^m
\end{tikzcd}
\label{eq:umatch}
\end{equation}

U-match decomposition is rich in mathematical structure, but one property above all will lead our discussion.  This is the \emph{matching} property, and it can be deduced directly from Equation \eqref{eq:umatchdef}.  
The matching property asserts that left-multiplication with $\DD$ does one of two things to each column $c$ of matrix $\CC$: (i) send $c$ to 0 -- in this case we call $c$  \emph{unmatched} -- or (ii) or send $c$ to a nonzero scalar multiple of some column $b$ of $\RR$ -- in this case $b$ and $c$ are \emph{matched}.  See Example \ref{ex:umatch} for illustration.

In fact, this example illustrates something more precise: not only does $\DD$ map columns of $\CC$ to columns of $\RR$, but $\MM$ completely determines the mapping.  More precisely,\footnote{Equation \ref{eq:matchingproperty_proto} is an equivalent form of the \emph{matching identity} defined in \S\ref{sec:matchingidentity}.} 
    \begin{align}
    \MM[i,j] \neq 0 \implies \DD \cdot \col_j( \CC) =\MM[i,j] \cdot  \col_i(\RR) 
    \label{eq:matchingproperty_proto}
    \end{align}
Thus matrix  $\MM$ tells us which column of $\CC$ maps to which column of $\RR$ and with what scaling factor.  Moreover, because $\MM$ is a matching matrix, we can infer that no two columns of $\CC$ map to the same column of $\RR$.  Thus, left-multiplication by $\DD$ determines a \emph{bona-fide}  matching between columns of $\CC$ and (nonzero scalar multiples of) columns of $\RR$.  Combining this with the fact that $\RR$ and $\CC$ are upper triangular, we arrive at the name \emph{U-match}.

Every part of the U-match decomposition has a name.   We call $\DD$ the \emph{mapping array} and $\MM$ the \emph{matching array}, respectively.  Since the columns of $\CC$ and $\RR$ form ordered bases, we refer to these as \emph{columnar ordered matching bases}, or \COB{}s.  To distinguish the two, we call $\CC$ the \emph{domain} \COB{} and $\RR$ the \emph{codomain} \COB{}.

\begin{example}  
\label{ex:umatch}
The following is a U-match decomposition.
    \begin{align}
    \underbrace
        {
            \begin{array}{|cc|}
                \cline{1-2}
                 1 & 1   \\   
                 &  1 \\ \cline{1-2}     
            \end{array}     
        }
        _
        { \scriptsize\Centerstack{\text{$\RR$} \\ \text{codomain} \\ \text{\COB{}}}}
    \quad   
    \underbrace
        {
            \begin{array}{|cc|}
                \cline{1-2}
                &    \\   
                3 &   \\ \cline{1-2}     
            \end{array}             
        }
        _
        { \scriptsize\Centerstack{\text{$\MM$} \\\text{matching} \\ \text{array}} }    
    \quad \quad
    =
    \quad \quad
    \underbrace
        {
            \begin{array}{|cc|}
                \cline{1-2}
                 3 &  -6   \\   
                 3 &  -6 \\ \cline{1-2}     
            \end{array}     
        }
        _
        { \scriptsize\Centerstack{\text{$\DD$} \\\text{factored} \\ \text{array}} }
    \quad    
    \underbrace
        {
            \begin{array}{|cc|}
                \cline{1-2}
                1 & 2   \\   
                 & 1   \\ \cline{1-2}     
            \end{array}             
        }
        _
        { \scriptsize\Centerstack{\text{$\CC$} \\\text{domain} \\ \text{\COB{}}}}   
        \label{eq:umatchexample}
	\end{align}  
Left-multiplication with $\DD$ maps the first column of $\CC$ to a scalar multiple of the second column of $\RR$:  
    \begin{align}
    \underbrace
        {
            \begin{array}{|c|}
                \cline{1-1}
                 1   \\   
                 1 \\ \cline{1-1}     
            \end{array}     
        }
        _
        { \col_{2}(\RR) }
    \;\;
    \cdot
    \;\;
    \underbrace
        {
            3    
        }
        _
        { \MM[2,1]  }    
    \quad \quad
    =
    \quad \quad
    \underbrace
        {
            \begin{array}{|cc|}
                \cline{1-2}
                 3 &  -6   \\   
                 3 &  -6 \\ \cline{1-2}     
            \end{array}     
        }
        _
        { \DD }
    \;\;
    \cdot
    \;\;
    \underbrace
        {
            \begin{array}{|c|}
                \cline{1-1}
                 1   \\   
                 0 \\ \cline{1-1}     
            \end{array}     
        }
        _
        { \col_{1}(\CC) }      
	\end{align}  
We say that these two columns are matched.  
Multiplication with $\DD$ sends the second column, $\col_2(\CC) = \bigl[\!\begin{smallmatrix} 2 \\ 1 \end{smallmatrix}\!\bigr]$, to 0, so we say that this column is unmatched.
\end{example}

We will reformulate the matching property in terms of the so-called  \emph{matching identity} in \S\ref{sec:matchingidentity} (Lemma \ref{lem:matchingidentity}).  This will provide a useful closed-form expression for $\DD \cdot \col_c(\CC)$  in terms of $\CC$ and $\MM$.  To do so, however, we must first introduce some notation for indexing.

\subsection{Row and column operation matrices}

Multiplying any U-match decomposition $\RR \MM = \DD \CC$ on the left with $\Ri$ yeids an equivalent identity, $\MM = \Ri \DD \CC$.  Viewed in this light, we can regard $\MM$ as the result of performing a sequence of column-operations (adding columns left-to-right) and row operations (adding rows bottom\footnote{Here ``bottom'' means bottom of the page; thus, for example, we might add a multiple of $\row_2(\DD)$ to $\row_1(\DD)$, but not vice versa.} to top) on $\DD$.  For this reason, we refer to $\CC$ as the \emph{column operation matrix} of the decomposition, and $\Ri$ as the \emph{row operation matrix}.  Row and column operation matrices are \underline{\smash{not uniquely defined}}, c.f. \S\ref{sec:uniqueCOMB}.

\subsection{Indexing}
\label{sec:indexing}

Since U-match decomposition is largely about matching, we  need precise notation to talk about matched indices.

 The \emph{index matching} of a U-match decomposition $\RR\MM = \DD \CC$ is the support of $\MM$.  Concretely, this means the set relation  $\IMatch \su \{1, \dots, m\} \times \{1, \ldots, n\}$ between the  row and column indices of $\MM$ such that 
    \begin{align*}
        (r,c) \in \IMatch        
        \iff
        \MM[r,c] \neq 0 .
    \end{align*}
The elements $(x,y)$ of this set correspond exactly to the \underline{\smash{pivot indices}} of the Gaussian elimination procedure (Algorithm \ref{alg_lrdec}) described in \S\ref{sec:factorization_algorithms}.  Indeed, the language of pivots and pivot elements permeates the literature on this style of decomposition.   We will therefore use the terms ``pivot index'' and ``matching index'' interchangeably.

The  \emph{definition} and \emph{values} of $\IMatch$ are $\DEF(\IMatch) = \{ p :  (p, q) \in \IMatch \}$ and $\VAL(\IMatch) = \{q : (p,q) \in \IMatch\}$, respectively.  There are natural bijections 

\begin{equation}
\begin{tikzcd}
{\DEF(\IMatch)} \arrow[r, shift left= 0.5ex, "\mCol"]
&
{\VAL(\IMatch)}\arrow[l, shift left= 0.5ex, "\mRow"]
\end{tikzcd}
\end{equation}
such that $\mRow(c) = r$ and $\mCol(r) = c$ for each $(r,c) \in \IMatch$

The row indices contained  in  $\DEF(\IMatch)$ and in the  set  complement  $ \{1, \ldots, m\} \backslash \DEF(\IMatch)$ can be arranged into strictly increasing sequences  $\prows = (\prow_1, \cdots,  \prow_k)$ and $\nprows = (\nprows_1, \ldots, \nprows_{n-k})$, respectively.  
A similar convention applies to  $\VAL(\IMatch)$ and $ \{1, \ldots, n\} \backslash \VAL(\IMatch)$, as expressed in the following table.

\begin{center}
    \begin{tabular}{rll}
  \toprule
        &rows&columns\\
        \midrule
    all indices&$\upto{m} = (1 < \cdots < m) $
    &
    $\upto{n} = (1< \cdots < n)$\\
    
        \midrule
    matched indices&$\prows  = ( \prow_1 < \cdots < \prow_k )$
    &
    $\pcols  =  ( \pcol_1 < \cdots < \pcol_k )$\\
    \midrule
    unmatched indices&$    \nprows  = ( \nprow_1 < \cdots < \nprow_{m-k} )$
    &
    $\npcols  =  ( \npcol_1 < \cdots < \npcol_{n-k} )$    \\
    \bottomrule
\end{tabular}
\end{center}
We further write $\prowp_p = \mCol(\prow_p)$ for the  column matched to  row $\prow_p$, and $\pcolp_p = \mRow(\pcol_p)$ for the  row matched to column $\pcol_p$.  Thus 
    \begin{align*}
        \IMatch
        & \;= \;
        \{ (\prow_1, \prowp_1), \ldots, (\prow_k, \prowp_k) \} \\
        &
        \; = \; 
        \{ (\pcolp_1, \pcol_1), \ldots, (\pcolp_k, \pcol_k) \} 
    \end{align*}
Note, however, that 
    \begin{align*}
        (\prow_p, \prowp_p) \neq (\pcolp_p, \pcol_p)
    \end{align*}
in general.

\subsection{The matching identity}
\label{sec:matchingidentity}

We may now formalize the statement that left-multiplication with $\DD$ ``matches columns to columns.''  Equation \eqref{eq_matchcolumn} contains both the statement and the proof.  Equation \eqref{eq_matchrow} encapsulates the dual statement that right-multiplication with $\DD$ ``matches rows to rows.''


\begin{lemma}[Matching identity for columns]
\label{lem:matchingidentity}
For any U-match decomposition $\RR \MM = \DD \CC$, one has 
    \begin{align}
        \DD \cdot \col_c(\CC) 
        & = 
        \col_c(\DD \cdot \CC) 
        \nonumber \\
        &=
        \col_c(\RR \cdot \MM)
        \nonumber\\
        &=
            \begin{cases}
                \col_{\mRow(c)} (\RR) \cdot \MM[\mRow(c), c] & c \text{ is a matched index } \\
                0 & else
            \end{cases}
        \label{eq_matchcolumn}
    \end{align}
\end{lemma}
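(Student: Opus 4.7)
The statement breaks into three successive equalities, so the plan is to treat each in turn, with the only substantive content lying in the third.

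First, the equality $\DD \cdot \col_c(\CC) = \col_c(\DD \cdot \CC)$ is a standard fact about matrix-vector multiplication: the $c$-th column of a matrix product $\DD\CC$ is obtained by applying $\DD$ to the $c$-th column of $\CC$. I would invoke this without further comment. The second equality $\col_c(\DD\CC) = \col_c(\RR\MM)$ is simply an extraction of the $c$-th column from both sides of the defining identity $\RR\MM = \DD\CC$ of the U-match decomposition (Equation \eqref{eq:umatchdef}).

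The work, such as it is, lies in rewriting $\col_c(\RR\MM)$ using the matching structure of $\MM$. The plan is to use $\col_c(\RR\MM) = \RR \cdot \col_c(\MM)$ and then split on cases according to whether $c \in \VAL(\IMatch)$ or not. Because $\MM$ is a matching matrix, each column contains at most one nonzero entry; more precisely, the nonzero entries of $\col_c(\MM)$ are exactly those $\MM[r,c]$ with $(r,c) \in \IMatch$. If $c$ is unmatched, no such $r$ exists, so $\col_c(\MM) = 0$ and therefore $\RR \cdot \col_c(\MM) = 0$. If $c$ is matched, then by the definition of $\mRow$ there is a unique $r = \mRow(c)$ with $\MM[r,c] \neq 0$, so $\col_c(\MM) = \MM[\mRow(c), c] \cdot \svec_{\mRow(c)}$, where $\svec_{\mRow(c)}$ is the standard basis vector. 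Multiplying on the left by $\RR$ picks out the column $\col_{\mRow(c)}(\RR)$ scaled by $\MM[\mRow(c),c]$, yielding the claimed formula.

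There is no real obstacle here: the lemma is essentially a bookkeeping identity that repackages $\RR\MM = \DD\CC$ in column-indexed form, and every step is forced by the definitions from \S\ref{sec:indexing} together with the uniqueness of the nonzero entry in each column of a matching matrix. The value of the statement is not the difficulty of the argument but the convenient closed form it provides for downstream use, in particular to justify the informal matching description accompanying Equation \eqref{eq:matchingproperty_proto}.
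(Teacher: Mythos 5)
Your proof is correct and follows essentially the same route the paper has in mind: the paper itself remarks just before stating the lemma that ``Equation \eqref{eq_matchcolumn} contains both the statement and the proof,'' treating the chain of equalities as self-evident, and your writeup simply unpacks those three steps (column extraction commutes with left multiplication, substitute the defining identity $\RR\MM = \DD\CC$, then case-split on whether $\col_c(\MM)$ is zero or a scaled standard basis vector) with the expected bookkeeping.
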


\begin{lemma}[Matching identity for rows]
\label{lem:matchingidentityDUAL}
For any U-match decomposition $\RR \MM = \DD \CC$, one has 
    \begin{align}
        \row_r( \Ri) \cdot \DD 
        & = 
        \row_r( \Ri \cdot \DD )
        \nonumber \\
        &=
        \row_r(\MM \cdot \Ci)
        \nonumber\\
        &=
            \begin{cases}
                \row_{\mCol(r)} (\Ci) \cdot \MM[r, \mCol(r)] & r \text{ is a matched index } \\
                0 & else
            \end{cases}
        \label{eq_matchrow}
    \end{align}
\end{lemma}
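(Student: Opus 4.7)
The plan is to dualize the proof of Lemma \ref{lem:matchingidentity}, exploiting the symmetry between rows and columns in the defining equation $\RR \MM = \DD \CC$. First I would left-multiply this equation by $\Ri$ and right-multiply by $\Ci$, which is legitimate because $\RR$ and $\CC$ are upper unitriangular and hence invertible. This yields the equivalent identity
\begin{align*}
\MM \cdot \Ci \; = \; \Ri \cdot \DD,
\end{align*}
which already encodes the heart of the lemma: the action of $\DD$ from the right on the rows of $\Ri$ factors through the sparse matching matrix $\MM$.

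Next I would extract the $r$th row of both sides. On the left, ordinary matrix multiplication gives $\row_r(\Ri) \cdot \DD = \row_r(\Ri \cdot \DD)$, which covers the first two displayed equalities of the lemma. On the right, I would invoke the matching property of $\MM$: every row of $\MM$ contains at most one nonzero entry. Consequently $\row_r(\MM \cdot \Ci) = \row_r(\MM) \cdot \Ci$ is zero whenever $r \notin \DEF(\IMatch)$, since in that case $\row_r(\MM) = 0$; otherwise it equals $\MM[r, \mCol(r)] \cdot \row_{\mCol(r)}(\Ci)$, because the unique nonzero entry of $\row_r(\MM)$ sits in column $\mCol(r)$ with value $\MM[r, \mCol(r)]$. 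Combining these two cases reproduces the piecewise expression in \eqref{eq_matchrow}.

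There is no real obstacle in the argument; it is a mechanical transposition of the columns version in Lemma \ref{lem:matchingidentity}. The one point worth flagging is bookkeeping: the rows version naturally pairs $\Ri$ (the row operation matrix) with $\Ci$ and uses the bijection $\mCol$, whereas the columns version pairs $\CC$ and $\RR$ directly and uses $\mRow$. This dichotomy is the formal counterpart to the intuitive observation made earlier in the section, namely that $\MM$ is obtained from $\DD$ by simultaneously applying row operations (left-multiplication by $\Ri$) and column operations (right-multiplication by $\CC$); the two matching identities simply isolate the row-side and column-side perspectives on the same reduction.
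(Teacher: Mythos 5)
Your proof is correct and follows exactly the route the paper intends: the paper treats the displayed chain of equalities as self-contained (stating that "Equation \eqref{eq_matchcolumn} contains both the statement and the proof" and that \eqref{eq_matchrow} is the dual), and your write-up simply makes explicit the same three steps — rearranging $\RR\MM = \DD\CC$ to $\MM\Ci = \Ri\DD$, extracting row $r$, and invoking the one-nonzero-per-row property of $\MM$.
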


\subsection{Linear dual spaces and matching}

The matching identity for rows (Lemma \ref{lem:matchingidentityDUAL}) has a natural interpretation in terms of linear dual spaces; this interpretation is not vital to our main narrative, but it does lend a complementary perspective.

Each length-$n$ row vector $v^*$ naturally determines a linear map $\field^n \to \field, \; u \mapsto v^* \cdot u$ from the space $\field^n$ of length-$n$ column vectors to the ground field $\field$.  Since $\Ci \CC = I$, the set $\{ \row_i(\Ci) : i \le n \}$ forms a basis dual to $\{ \col_j(\CC) : j \le n\}$, in the sense that $\row_i(\Ci) \cdot \col_j(\CC) = \delta_{ij}$, the Kronecker delta.

Viewed in this light, the matching identity for rows  states that the linear map of dual spaces $(\field^m)^* \to (\field^n)^*, \; v^* \mapsto (v^* \circ \DD)$ carries each element of the basis dual to $\RR$ either to 0 or to a nonzero scalar multiple of some element in the basis dual to $\CC$.

\subsection{Proper decomposition}

A  {U-match decomposition} $\RR \MM  = \DD \CC$ is \emph{proper} if both of the following axioms hold, where $\id^{n \times n}$ denotes the $n \times n$ identity matrix: 
    \begin{enumerate}[label=\textbf{(A\arabic*)}]
        \item \label{item:proper1} $\col_k(\MM) = 0 \implies \row_k(\CC)$ is the $k$th standard unit row vector.  This is equivalent to the condition that
            \begin{align*}
                \CC_{\upto{n} \npcols} = \id^{n\times n}_{\upto{n} \npcols}.
            \end{align*}
        \item \label{item:proper2} $\row_k(\MM) = 0 \implies \col_k(\RR)$ is the $k$th standard unit column vector; equivalently, 
            \begin{align*}
                \RR_{ \nprows \upto{m}} = \id^{m\times m}_{\nprows \upto{m}}.
            \end{align*}
    \end{enumerate}
    
Proper decompositions have useful properties we that will leverage in later sections.  Most of the algorithms that compute U-match decomposition actually compute \emph{proper} decompositions.

\subsection{Existence}

Every matrix $\DD$ admits a proper U-match decomposition; \emph{a fortiori}, every matrix has a U-match decomposition.  We provide a constructive proof in \S\ref{sec:factorization_algorithms} (Algorithm \ref{alg_lrdec} and Proposition \ref{prop:correctnessofalgorithm1}).
These results are technical in nature, so we defer them to the end of the discussion.

\subsection{Uniqueness of matching arrays, non-uniquness of \COB{}s, and codetermination of proper \COB{}s}
\label{sec:uniqueCOMB}

A single matrix $\DD$ may admit multiple distinct U-match factorizations, and indeed multiple distinct proper U-match factorizations.  However, the associated matching array is unique.

\begin{example}[Proper \COB{}s are not uniqu]
Since $N \id = \id N$, any upper-unitriangular matrix can be a (co)domain \COB{} for mapping array $\DD = \id$.  
\end{example}

\begin{theorem}[Matching arrays are unique]
\label{thm:matchingunique}
Let $\DD$ be a matrix. For any two U-match decompositions $\RR \MM = \DD \CC$ and $\stilde \RR \stilde \MM = \DD \stilde \CC$, one has $\MM = \stilde \MM$.
\end{theorem}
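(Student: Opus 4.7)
The plan is to show that every entry of $\MM$ is determined by $\DD$ alone, via minor identities, so that no two U-match decompositions of $\DD$ can disagree on $\MM$. Writing $\MM=\Ri\DD\CC$, I would separately establish that the support and the pivot values of $\MM$ depend only on $\DD$.

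For the support, observe that left-multiplication by an upper unitriangular matrix effects row operations from bottom to top, and right-multiplication by the same effects column operations from left to right; both preserve the rank of every ``southwest'' submatrix $\DD_{[i,m]\times[1,j]}$. For a matching matrix, $\mathrm{rank}(\MM_{[i,m]\times[1,j]})$ equals $|\IMatch\cap([i,m]\times[1,j])|$, so this count is determined by $\DD$, and its discrete second difference in $(i,j)$ recovers the indicator function of $\IMatch$. Consequently the support of $\MM$ is intrinsic to $\DD$, so we may write $\IMatch$ for the common support of any matching array arising from $\DD$.

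For the values, fix $(r,c)\in\IMatch$, set $\IMatch_{r,c}:=\IMatch\cap([r,m]\times[1,c])$, and let $I:=I(r,c)$ and $J:=J(r,c)$ denote the row and column sets of $\IMatch_{r,c}$, each of size $k$. I would expand $\det(\DD_{I,J})$ by the Cauchy-Binet formula applied to $\DD=\RR\MM\Ci$. A term indexed by $k$-subsets $(I',J')$ can be nonzero only if (i) $\det(\MM_{I',J'})\neq 0$, which forces $(I',J')$ to be the row and column sets of some $S\subseteq\IMatch$ of size $k$; and (ii) $\det(\RR_{I,I'})\neq 0$ and $\det(\Ci_{J',J})\neq 0$, which by upper triangularity require the dominance conditions $I'\succeq I$ and $J'\preceq J$ (componentwise after sorting into increasing order). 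The first forces every row of $S$ to be $\geq r$, since the smallest element of $I'$ is $\geq$ the smallest element of $I \geq r$; the second, symmetrically, forces every column of $S$ to be $\leq c$. Hence $S\subseteq\IMatch_{r,c}$, and by $|S|=k=|\IMatch_{r,c}|$ we conclude $S=\IMatch_{r,c}$, i.e.\ $(I',J')=(I,J)$. The sole surviving term is $\det(\RR_{I,I})\det(\MM_{I,J})\det(\Ci_{J,J})=\det(\MM_{I,J})$, since the outer factors are principal minors of unitriangular matrices and so equal $1$. Because $\MM_{I,J}$ has exactly one nonzero entry in each row and column, $\det(\MM_{I,J})=\varepsilon\prod_{(r',c')\in\IMatch_{r,c}}\MM[r',c']$ with $\varepsilon\in\{\pm1\}$ the sign of the matching permutation on $\IMatch_{r,c}$, which is determined by $\IMatch$ alone.

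Therefore $\prod_{(r',c')\in\IMatch_{r,c}}\MM[r',c']=\varepsilon^{-1}\det(\DD_{I(r,c),J(r,c)})$ depends only on $\DD$. Using the identity $\IMatch_{r,c}=\IMatch_{r+1,c}\sqcup\{(r,c)\}$ for $(r,c)\in\IMatch$, the ratio of two such products isolates $\MM[r,c]$, which is thereby determined by $\DD$ (the denominator is nonzero because all pivot entries of a matching matrix are nonzero by definition). The same minor formula applied to $\stilde\MM$ yields $\stilde\MM[r,c]=\MM[r,c]$, completing the proof. The main obstacle is the dominance step of the Cauchy-Binet argument: showing that only the canonical term survives reduces to the combinatorial fact that any $k$-subset of $\IMatch$ whose row set dominates $I(r,c)$ and whose column set is dominated by $J(r,c)$ must lie entirely within the southwest region of $(r,c)$ and, by cardinality, coincide with $\IMatch_{r,c}$.
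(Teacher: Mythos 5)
Your proof is correct. The argument for the \emph{support} of $\MM$---that multiplication by upper unitriangular matrices preserves the ranks of all lower-left (southwest) corner submatrices, and for a matching matrix these ranks encode the indicator of $\IMatch$ via a discrete second difference---is essentially the same as the paper's. The argument for the \emph{pivot values} is genuinely different. The paper concatenates the two decompositions to obtain $U\MM = \stilde\MM V$ with $U = \stilde\RR^{-1}\RR$ and $V = \stilde\CC^{-1}\CC$ upper unitriangular, and observes directly that for $(r,c)$ in the common support the entry is preserved: $(U\MM)[r,c] = U[r,r]\,\MM[r,c] = \MM[r,c]$ because column $c$ of $\MM$ has its sole nonzero at row $r$, and symmetrically $(\stilde\MM V)[r,c] = \stilde\MM[r,c]$. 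You instead expand $\det(\DD_{I,J})$ by iterated Cauchy--Binet applied to $\DD = \RR\MM\Ci$, use the componentwise dominance criterion for nonvanishing minors of a triangular matrix (which does hold, by the standard pigeonhole argument) to isolate the single surviving term, and arrive at the closed formula $\prod_{(r',c')\in\IMatch_{r,c}} \MM[r',c'] = \pm\det\bigl(\DD_{I(r,c),\,J(r,c)}\bigr)$, from which each $\MM[r,c]$ is recovered as a ratio of determinants. This route is longer than the paper's one-line observation for the values, but it buys something extra: an explicit Cramer-style expression for each pivot value purely in terms of minors of $\DD$, directly generalizing the classical formula for LU pivots as ratios of leading principal minors. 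Both routes are sound.
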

\begin{proof}
By vertically concatenating the associated diagrams from  Equation \eqref{eq:umatch} and reversing some arrows, one can obtain a U-match decomposition of form $\stilde{\RR}^{-1} \RR \MM = \stilde \MM \stilde{\CC}^{-1} \CC$. Multiplication by invertible upper triangular matrices does not change the rank of any submatrix in the lower-lefthand corner of an array, so the number of nonzero entries in each $p \times q$ submatrix in the lower lefthand corners of $\MM$ and $\stilde \MM$ is the same.  Thus, $\MM$ and $\stilde \MM$ have equal sparsity patterns.  Furthermore, every nonzero entry of a matching array remains the same after multiplication by a unitriangular matrix, so the nonzero elements of $\MM$ and $\stilde \MM$ are identical.
\end{proof}

In light of Theorem \ref{thm:matchingunique}, we may speak  of the  \underline{\smash{unique matching array}} associated to $\DD$ by U-match decomposition.\footnote{Theorem \ref{thm:matchingunique} reflects a deeper structural result concerning bifiltrations of linear spaces (or, more generally, modular lattices), c.f.\  Appendix \S\ref{sec:asideonordertheory}.}

However, in proper U-match decompositions $\DD$ determines three out of four blocks of both $\RR$ and $\CC$.  This can be shown using the ``inner identities'' which will be proved in Theorem  \ref{thm:umatchblockidentities}.

\begin{proposition}[Row and column operation matrices are ``mostly'' unique]
\label{prop:1blockdiff}
If $(\RR, \MM, \DD, \CC)$ is a proper U-match decompositions, then 
\begin{align*}
    \Ri
    \equiv
    \begin{array}{l |cc|}
        \multicolumn{1}{c}{}& \multicolumn{1}{c}{\nprows} & \multicolumn{1}{c}{\prows}  \\  \cline{2-3}
        \nprows  & \id & - \DD_{\nprows \pcols} \Drki   \\   
        \prows   & 0 & * \\ \cline{2-3}     
    \end{array}      
    &&
    \CC
    \equiv
    \begin{array}{l |cc|}
        \multicolumn{1}{c}{}& \multicolumn{1}{c}{\pcols} & \multicolumn{1}{c}{\npcols}  \\  \cline{2-3}
        \pcols  & * & -\Drki \DD_{\prows \npcols}   \\   
        \npcols   & 0 & \id \\ \cline{2-3}     
    \end{array}      
\end{align*}
Thus, $\DD$ uniquely determines three out of four blocks in both $\RR$ and $\CC$.
\end{proposition}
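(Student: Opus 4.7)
The plan is to extract the four blocks of $\CC$ and $\Ri$ in two stages: the identity and zero blocks come straight from the proper-decomposition axioms, and the two remaining blocks come from comparing blocks of the defining identity $\RR\MM=\DD\CC$.

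First I would translate the axioms into block form. Axiom \ref{item:proper1} says that for each $k\in\npcols$ the row $\row_k(\CC)$ is a standard unit row vector; re-indexing $\CC$ by $(\pcols,\npcols)$, this reads $\CC_{\npcols\pcols}=0$ and $\CC_{\npcols\npcols}=\id$, which is exactly the lower row of the block form claimed for $\CC$. Dually, axiom \ref{item:proper2} gives $\RR_{\nprows\nprows}=\id$ and $\RR_{\prows\nprows}=0$. Because $\RR$ is invertible upper unitriangular, the diagonal block $\Rrr$ is itself invertible, and applying the standard block-inversion formula to $\RR\equiv\begin{pmatrix}\id&\RR_{\nprows\prows}\\0&\Rrr\end{pmatrix}$ yields $\Ri\equiv\begin{pmatrix}\id&-\RR_{\nprows\prows}\Rrr^{-1}\\0&\Rrr^{-1}\end{pmatrix}$, so $(\Ri)_{\nprows\nprows}=\id$ and $(\Ri)_{\prows\nprows}=0$, as claimed.

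Next I would compare three blocks of $\RR\MM=\DD\CC$. The $(\prows,\pcols)$-block reads $\Rrr\Mrk=\Drk\Ccc$: because $\Mrk$ is square with exactly one nonzero entry in each row and column (hence invertible) and $\Rrr$ is invertible, the product $\Drk\Ccc$ is invertible, which forces the square factors $\Drk$ and $\Ccc$ to be individually invertible. The $(\prows,\npcols)$-block reads $0=\Drk\CC_{\pcols\npcols}+\DD_{\prows\npcols}$ (using that $\MM$ vanishes on $\npcols$-columns and that $\CC_{\npcols\npcols}=\id$), which rearranges to $\CC_{\pcols\npcols}=-\Drki\DD_{\prows\npcols}$. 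The $(\nprows,\pcols)$-block reads $\RR_{\nprows\prows}\Mrk=\DD_{\nprows\pcols}\Ccc$; combining this with $\Rrr\Mrk=\Drk\Ccc$ to eliminate $\Mrk\Ccc^{-1}$ gives $\RR_{\nprows\prows}\Rrr^{-1}=\DD_{\nprows\pcols}\Drki$, so $(\Ri)_{\nprows\prows}=-\DD_{\nprows\pcols}\Drki$.

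The main obstacle is that invertibility of $\Drk$ (and of $\Ccc$) is not a priori visible from their definitions, and the formula for $\CC_{\pcols\npcols}$ is meaningless without it; fortunately the single identity $\Rrr\Mrk=\Drk\Ccc$ resolves this at no extra cost. After that, the argument reduces to reading off four block equations in the right order. The invertibility claim and the block identities used above are exactly instances of the inner identities catalogued in Theorem \ref{thm:umatchblockidentities}, which the proposition statement explicitly permits us to invoke.
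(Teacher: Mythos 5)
Your proof is correct, and all the individual steps check out: the axioms give the $\id$ and $0$ blocks, the $(\prows,\pcols)$-block of $\RR\MM=\DD\CC$ gives $\Rrr\Mrk = \Drk\Ccc$ and forces both $\Drk$ and $\Ccc$ to be invertible, the $(\prows,\npcols)$-block yields $\CC_{\pcols\npcols}=-\Drki\DD_{\prows\npcols}$, and combining the $(\nprows,\pcols)$-block with the $(\prows,\pcols)$-block yields $(\Ri)_{\nprows\prows}=-\RR_{\nprows\prows}\Rirr=-\DD_{\nprows\pcols}\Drki$. Your route differs from the paper's in a useful way: the paper simply forward-references the inner identities of Theorem \ref{thm:umatchblockidentities} (and Lemma \ref{lem:invertiblepivblock} for invertibility of $\Drk$), whereas you re-derive the needed block relations from scratch by comparing blocks of the defining equation $\RR\MM=\DD\CC$ under the permuted block form. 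This effectively inlines the relevant fragment of the appendix argument that the paper defers, making your proof self-contained and exhibiting exactly which block of the defining identity pins down which block of $\Ri$ and $\CC$; the paper's citation-based version is shorter but opaque until one reads Appendix \ref{sec:blockidentities}. One cosmetic note: be careful that the paper's inline equivalence $\CC_{\upto{n}\npcols}=\id^{n\times n}_{\upto{n}\npcols}$ in Axiom \ref{item:proper1} appears to have the index pair transposed relative to the axiom's text and to Equation \eqref{eq:properaxiom_blockform_ColOper}; your reading ($\CC_{\npcols\pcols}=0$, $\CC_{\npcols\npcols}=\id$) agrees with the latter and with Lemma \ref{lem:blockuppertriangularcombs}, and is the one needed to make the block computation go through.
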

\begin{proof}

It follows from the inner identities (Theorem \ref{thm:umatchblockidentities}) that 
    $
        \RR_{\nprows\prows} = -\DD_{\nprows  \pcols} (\DD_{\prows  \pcols})^{-1}
    $
    and
    $
    \CC_{\pcols\npcols} = - (\DD_{\prows  \pcols})^{-1} \DD_{\prows  \npcols}
    $.  The fact that $\Ri_{\upto{m} \nprows} = \id^{m \times m}_{\nprows}$ and $\CC_{\upto{n} \npcols} = \id^{n\times n}_{\upto{n} \npcols}$ follows from axioms \ref{item:proper1} and \ref{item:proper2}.    The desired conclusion follows.
\end{proof}

 Moreover, a proper domain \COB{} uniquely determines the corresponding codomain \COB{}, and vice versa:

\begin{proposition}[Proper \COB{}s codetermine]
\label{prop:LRcorrespondence} If $\RR \MM = \DD \CC$ and $\stilde \RR \MM = \DD \stilde \CC$ are proper U-match decompositions, then $\RR = \stilde \RR$ if and only if  $\CC = \stilde \CC$.
\end{proposition}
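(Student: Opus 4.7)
The plan is to verify each implication separately by examining the appropriate difference matrix and exploiting the rigidity imposed by proper decompositions. The main ingredients are the matching identity (Lemma \ref{lem:matchingidentity}), the proper-decomposition axioms \ref{item:proper1} and \ref{item:proper2}, and invertibility of $\Drk$, which is guaranteed by the inner identities referenced in Proposition \ref{prop:1blockdiff}.

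For the direction $\CC = \stilde \CC \implies \RR = \stilde \RR$, I would apply the matching identity to both decompositions. For each matched column index $c \in \pcols$ it yields
\[
\MM[\mRow(c), c] \cdot \col_{\mRow(c)}(\RR) \;=\; \DD \cdot \col_c(\CC) \;=\; \DD \cdot \col_c(\stilde \CC) \;=\; \MM[\mRow(c), c] \cdot \col_{\mRow(c)}(\stilde \RR).
\]
Since $\MM[\mRow(c), c] \neq 0$, the columns of $\RR$ and $\stilde \RR$ agree at every index in $\prows$. Axiom \ref{item:proper2} then forces the remaining columns (at indices in $\nprows$) of both matrices to be the corresponding standard unit columns, so these agree as well. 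Hence $\RR = \stilde \RR$.

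For the converse direction $\RR = \stilde \RR \implies \CC = \stilde \CC$, subtracting the two U-match equations gives $\DD X = 0$ for $X := \CC - \stilde \CC$. Axiom \ref{item:proper1} forces the $\npcols$-indexed rows of both $\CC$ and $\stilde \CC$ to be standard unit row vectors, so $X_{\npcols, \upto{n}} = 0$ and only rows at $\pcols$ can contribute. Restricting $\DD X = 0$ to the $\prows$-indexed rows then collapses to $\Drk \cdot X_{\pcols, \upto{n}} = 0$, and invertibility of $\Drk$ yields $X_{\pcols, \upto{n}} = 0$. Hence $X = 0$ and $\CC = \stilde \CC$.

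The one nontrivial ingredient, not visible from the U-match equation itself, is the invertibility of $\Drk$; this must be drawn from the inner identities of Theorem \ref{thm:umatchblockidentities}. Apart from that, the argument is a direct application of the matching property and the proper axioms, with an asymmetric use of axiom \ref{item:proper2} in the forward direction and axiom \ref{item:proper1} in the reverse.
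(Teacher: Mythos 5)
Your proof is correct, but it takes a genuinely different route from the paper's. The paper proves the statement in two lines: it appeals to the inner identities (Theorem~\ref{thm:umatchblockidentities}), which give closed-form expressions for $\CC$ and $\Ci$ in terms of $\Rirr$, $\MM$, and $\DD$ alone -- so $\RR$ determines $\CC$ -- and then invokes anti-transpose symmetry for the converse. By contrast, you argue each direction from scratch: the forward direction $\CC = \stilde\CC \Rightarrow \RR = \stilde\RR$ via the matching identity (Lemma~\ref{lem:matchingidentity}), which recovers the $\prows$-indexed columns of $\RR$ from $\DD$ and the matched columns of $\CC$, combined with axiom~\ref{item:proper2} for the $\nprows$-indexed columns; and the reverse direction via the difference matrix $X = \CC - \stilde\CC$, axiom~\ref{item:proper1} to kill the $\npcols$-indexed rows of $X$, and invertibility of $\Drk$ to force the rest to zero. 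Your route is more elementary and self-contained: it needs only the matching identity, the proper axioms, and invertibility of $\Drk$, avoiding both the heavier block-identity machinery and the anti-transpose correspondence. One small sourcing slip: the invertibility of $\Drk$ is established in Lemma~\ref{lem:invertiblepivblock}, not in the inner identities of Theorem~\ref{thm:umatchblockidentities} (Proposition~\ref{prop:1blockdiff} uses $\Drki$ but presupposes invertibility rather than proving it); citing Lemma~\ref{lem:invertiblepivblock} directly would be cleaner and logically prior. Apart from that attribution, the argument is sound.
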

\begin{proof}
Theorem \ref{thm:umatchblockidentities} shows that $\RR$ uniquely determines  $\CC$.  The converse holds by the anti-transpose symmetry, which is defined in \S\ref{sec:antitransposesymmetry}.
\end{proof}

\begin{remark}
If $\DD$ is an invertible square matrix then every row index matches to a column index and vice versa.  Thus, for invertible $\DD$, every U-match decomposition is proper.  Proposition \ref{prop:LRcorrespondence} implies that in this case $\RR$ uniquely determines $\CC$ and vice versa.  This observation serves as a sanity check, since the same conclusion can be deduced directly from ordinary matrix algebra.  
\end{remark}

\subsection{Anti-transpose symmetry}
\label{sec:antitransposesymmetry}

The \emph{anti-transpose} of a matrix $\DD$ is the matrix $\DD^\perp$ obtained by transposing $\DD$, then reversing the order of rows and columns.
To each (proper) U-match decomposition $\RR \MM = \DD \CC$ corresponds an ``anti-transposed'' (proper) U-match decomposition  $(\CC^{-1})^{\perp} \MM^{\perp} = \DD^\perp (\RR^{-1})^\perp$.  This transformation carries the domain \COB{} of the original decomposition to the codomain \COB{} in the new decomposition, and vice versa.

The significance of the anti-transpose for persistent (co)homology  was first explored in \cite{de2011dualities};  much of the current discussion grows  from that work.

\subsection{Subspace bases}
\label{sec:fundamentalsubspaces}

U-match decomposition provides an elegant means to solve one of the most versatile problems in computational algebra: constructing a basis for a subspace.  As we will see, U-match decomposition yields bases for images, kernels, and inverse images not only of $\DD$, but of every upper-lefthand block submatrix of $\DD$.

\subsubsection*{A lattice of subspaces}

Formally, let us define filtrations
\begin{alignat}{6}
    0 &= \filta_0 \field^n &&\subseteq \cdots &&\subseteq  \filta_n \field^n  &&= \field^n
    \label{eq:domainfiltration} \\
    0 &= \filtb_0 \field^m &&\subseteq \cdots &&\subseteq  \filtb_m \field^m  &&= \field^m
    \label{eq:codomainfiltration}
\end{alignat}
where $\filta_\fparam \field^n$ is the subspace of $\field^n$ consisting of vectors supported on $\upto{p}$, and $\filtb_\fparam \field^n$ is defined similarly as a subspace of $\field^m$.  By abuse of notation, we will sometimes abbreviate these expressions to $\filta_\fparam$ and $\filtb_\fparam$.  We will likewise write $\filta_*$ and $\filtb_*$ for the nested sequences  \eqref{eq:domainfiltration} and \eqref{eq:codomainfiltration}, respectively. Further, let us write
    $
    \DD_\bullet \filta_\fparam  = \{ \DD v : v \in \filta_\fparam \}
    $
    and 
    $
    \DD^\bullet \filta_\fparam  = \{ v : Dv \in \filta_\fparam \}        
    $
for the direct image and inverse image of $\filta_\fparam$ under $\DD$, respectively. Then we have nested sequences
\begin{alignat*}{6}
    \Ker(\DD) &= \DD^\bullet \filtb_0 &&\subseteq \cdots &&\subseteq  \DD^\bullet \filtb_m  &&= \field^m
    \\
    0 &= \DD_\bullet \filta_0 &&\subseteq \cdots &&\subseteq  \DD_\bullet \filta_n  &&= \Im(\DD) 
\end{alignat*}
denoted $\DD^\bullet \filtb_*$ and $\DD_\bullet \filta_*$, respectively.

We show (Theorem \ref{thm:fundamentalsubspaces}) that if $\RR \MM = \DD \CC$ is a U-match decomposition and $\cols(\CC)$ is the set of columns of $\CC$, then $\cols(\CC)$, contains a basis for each subspace in either  $\filta_*$ or $\DD^\bullet \filta_*$.    It therefore contains a basis for the sum and intersection of any two spaces in  $\filta_*$ and $\DD^\bullet \filta_*$, by the following elementary result from linear algebra: 

\begin{lemma}\label{lem:subspace_bases}
Let $V$ be a vector space. If a basis $T$ for $V$ contains bases for subspaces $V_1$ and $V_2$, then it contains bases for $V_1 \cap V_2$ and $V_1 + V_2$.
\end{lemma}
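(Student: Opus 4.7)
The plan is to exhibit the natural candidates for the two bases and verify each: take $T_i \subseteq T$ to be the subset that forms a basis for $V_i$, and then show that $T_1 \cup T_2$ is a basis for $V_1 + V_2$ and that $T_1 \cap T_2$ is a basis for $V_1 \cap V_2$.

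First I would dispatch the sum. The set $T_1 \cup T_2$ lies inside $T$, which is linearly independent, so $T_1 \cup T_2$ is automatically linearly independent. On the other hand, any $v \in V_1 + V_2$ can be written as $v_1 + v_2$ with $v_i \in V_i$, and expanding each $v_i$ over $T_i$ expresses $v$ as a linear combination of elements of $T_1 \cup T_2$. This is the easy half.

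The substantive step is the intersection. For $T_1 \cap T_2$, linear independence is again immediate since $T_1 \cap T_2 \subseteq T$. To see that it spans $V_1 \cap V_2$, I would take an arbitrary $v \in V_1 \cap V_2$ and expand it in two ways: once over $T_1$ (using that $T_1$ is a basis for $V_1$) and once over $T_2$ (using that $T_2$ is a basis for $V_2$). Both expansions are expressions of $v$ as a linear combination of elements of $T$, and since $T$ is a basis for $V$, such an expression is unique. Therefore the two coefficient sequences must agree term-by-term as functions on $T$, which forces every basis element with a nonzero coefficient to lie simultaneously in $T_1$ and in $T_2$, i.e.\ in $T_1 \cap T_2$. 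Hence $v \in \spanbrac{T_1 \cap T_2}$.

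There is no real obstacle here; the only subtle point is the uniqueness-of-expansion argument for the intersection, which is the standard trick for deducing $\spanbrac{T_1 \cap T_2} = V_1 \cap V_2$ from the hypothesis that $T$ is globally a basis (as opposed to merely a generating set). Without the ambient basis $T$ the conclusion would fail, so it is worth isolating this use of the hypothesis in the write-up.
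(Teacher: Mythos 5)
Your proof is correct and follows essentially the same route as the paper's: both handle the sum by observing that $T_1 \cup T_2$ spans $V_1 + V_2$ and is independent as a subset of $T$, and both establish the intersection by comparing the $T_1$- and $T_2$-expansions of an element of $V_1 \cap V_2$ and invoking uniqueness of representation over the global basis $T$ to conclude that the support lies in $T_1 \cap T_2$. If anything, your write-up is the cleaner of the two, since it names $T_1 \cap T_2$ explicitly as the candidate basis rather than working with unnamed subsets $T_1', T_2'$.
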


\begin{proof}
Let $T_1, T_2 \subseteq T$ be bases for $V_1$ and $V_2$ respectively. Then $T_1 \cup T_2$ spans $V_1 + V_2.$ Since $T_1 \cup T_2 \subseteq T$ is linearly independent, it is thus a basis.

On the other hand, since $V_1\cap V_2 \subseteq V_1, V_2$, we can uniquely represent every element $V_1\cap V_2$ using linear combinations of elements of some subset $T_1' \subseteq T_1$ and of some subset $T_2'\subseteq T_2$. However, since $T_1', T_2' \subseteq T$, and there is a unique representation of every element of $V$ as a linear combination of elements of $T$, these two sets must be the same, and thus a basis for $V_1 \cap V_2.$
\end{proof}

In fact, Lemma \ref{lem:subspace_bases} implies more; if we write 
    $
    \subspacelattice^n 
    $
for the order lattice of linear subspaces of $\field^n$, and 
    $$
    \subspacelattice^n_\mathrm{dom}
    \subseteq
    \subspacelattice^n 
    $$
for the \underline{bounded order sublattice}\footnote{In this context, a sublattice is bounded if it contains $0$ and $\field^n$.} of $\subspacelattice^n$ generated by $\filta_*$ and $\DD^\bullet \filta_*$, then $\cols(\CC)$ contains a basis for every element of $\subspacelattice^n_\mathrm{dom}$ (Theorem \ref{thm:bifiltrationbasis}).    Likewise, if 
    $$
    \subspacelattice^m_\mathrm{cod}
    \subseteq
    \subspacelattice^m 
    $$
denotes the sublattice of $\subspacelattice^m $ generated by $\filtb_*$ and $\DD_\bullet \filta_*$, then $\cols(\RR)$ contains a basis for every element of $\subspacelattice^m_\mathrm{cod}$ (Theorem \ref{thm:bifiltrationbasis}).

\subsubsection*{Kernel and image}

The connection between these subspaces and the persistence computation is foreshadowed by the following language.  Let us say that a vector $v \in \filta_\fparam \field^n \backslash \filta_{\fparam-1} \field^n$ is \emph{born} at time $\fparam$, and a vector $u \in \DD_\bullet \filta_\fparam \backslash \DD_\bullet \filta_{\fparam-1}$ is \emph{bounded} by time $p$.  Then
    \begin{enumerate}
        \item $\filta_\fparamb \cap \DD^\bullet 0$ is the set of \underline{kernel vectors} born by time $\fparamb$ 
        \item $\filtb_\fparamb \cap \DD_\bullet \filta_\fparam$ is the set of \underline{\smash{image  vectors}} born by time $\fparamb$ that are bounded by time  $\fparam$
    \end{enumerate}

\subsubsection*{Selection of basis elements (quick reference)}

A note to those concerned more with the punchline than  the technical details of proof:

Bases for $\DD_\bullet \filta_\fparam$ and $\DD^\bullet \filtb_\fparam$ are given as follows:
\begin{align}
    \DD^\bullet \filtb_\fparam: 
        &&
        \{ \col_c( \CC) : \col_c(\MM)  \in \filtb_\fparam \}
        \label{eq:basispullback}
    \\        
    \DD_\bullet \filta_\fparam: 
        &&
        \{ \col_{ r }( \RR) : (r,c) \in \IMatch, \; \;  c \le \fparam \} 
        \label{eq:basispushforward}
\end{align}
To obtain a basis for $\filta_\fparamb \cap \DD^\bullet \filtb_\fparam$, simply remove any elements of Equation \eqref{eq:basispullback} that are  supported outside of  $\upto{\fparamb}$.  To obtain a basis for $\filtb_\fparamb \cap \DD_\bullet \filta_\fparam$, likewise  remove any elements of Equation \eqref{eq:basispushforward} that are  supported outside of  $\upto{\fparamb}$.

\subsubsection*{Technical statements}

\begin{theorem}
\label{thm:fundamentalsubspaces}
Let $\RR\MM = \DD \CC$ be any U-match decomposition. 
\begin{enumerate}
    \item The intersection $\cols(\CC) \cap \DD^\bullet \filta_\fparam$ is a basis for  $\DD^\bullet \filta_\fparam$.
    \item The intersection $\cols(\RR) \cap \DD_\bullet \filta_\fparam$ is a basis for $\DD_\bullet \filta_\fparam$.    
\end{enumerate}
\end{theorem}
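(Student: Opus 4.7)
The plan is to exploit two facts that follow from upper unitriangularity: (a) $\cols(\CC)$ and $\cols(\RR)$ are bases of $\field^n$ and $\field^m$, respectively; and (b) the inverses $\CC^{-1}$ and $\RR^{-1}$ are likewise upper unitriangular, hence preserve the filtrations $\filta_*$ on $\field^n$ and $\filtb_*$ on $\field^m$. Combined with the matching identity (Lemma \ref{lem:matchingidentity}), these reduce both parts to sparse bookkeeping on $\MM$.

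For part (1), reading the codomain filtration $\filtb_\fparam$ in place of $\filta_\fparam$ as in the quick reference, I would write an arbitrary $v \in \field^n$ uniquely as $v = \CC \alpha$. Then $\DD v = \RR \MM \alpha$, and since $\RR^{-1}$ preserves $\filtb_*$, the condition $\DD v \in \filtb_\fparam$ is equivalent to $\MM \alpha \in \filtb_\fparam$. Because $\MM$ is a matching matrix, its columns have pairwise disjoint supports, so no cancellation occurs: $\MM \alpha \in \filtb_\fparam$ iff $\alpha_c = 0$ for every $c$ with $\col_c(\MM) \notin \filtb_\fparam$. This identifies $\DD^\bullet \filtb_\fparam$ with the span of $T := \{ \col_c(\CC) : \col_c(\MM) \in \filtb_\fparam \}$, a linearly independent subset of $\cols(\CC)$. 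Specializing the analysis to $\alpha = e_c$ (the $c$-th standard coordinate vector) shows that $\col_c(\CC) \in \DD^\bullet \filtb_\fparam$ iff $\col_c(\MM) \in \filtb_\fparam$, so $T = \cols(\CC) \cap \DD^\bullet \filtb_\fparam$.

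For part (2), take $v \in \DD_\bullet \filta_\fparam$ and pick $u \in \filta_\fparam$ with $v = \DD u$; expand $u = \CC \alpha$. Since $\CC$ preserves $\filta_*$, the condition $u \in \filta_\fparam$ is equivalent to $\alpha_c = 0$ for all $c > p$. The matching identity then expresses $v = \RR \MM \alpha$ as a linear combination of $S := \{ \col_r(\RR) : (r,c) \in \IMatch,\ c \le p \}$, with coefficients $\alpha_c \MM[r,c]$ at index $r = \mRow(c)$. Conversely each member of $S$ lies in $\DD_\bullet \filta_\fparam$, because $\col_r(\RR) = \MM[r, \mCol(r)]^{-1} \DD \col_{\mCol(r)}(\CC)$ and $\col_{\mCol(r)}(\CC) \in \filta_{\mCol(r)} \subseteq \filta_\fparam$. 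Linear independence of $S$ is automatic as a subset of $\cols(\RR)$, and $S = \cols(\RR) \cap \DD_\bullet \filta_\fparam$ follows from uniqueness of expansion in the basis $\cols(\RR)$.

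The only real subtlety is the reduction itself: rather than wrestle with $\DD$ directly, one absorbs $\RR$ (resp.\ $\CC$) into the filtration via upper unitriangularity, leaving only the extremely sparse structure of $\MM$ to analyse. Once that move is made, both statements follow from the pairwise-disjoint-support property of the columns of a matching matrix, together with the bijective correspondence between matched rows and matched columns encoded by $\mRow$ and $\mCol$.
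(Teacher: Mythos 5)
Your proof is correct and in essence the same as the paper's: both arguments absorb the upper unitriangular factors $\RR$ and $\CC$ into the filtrations (the paper via a commutative-diagram reduction to the special case $\DD = \MM$, you via the explicit change of coordinates $\alpha = \CC^{-1}v$) and then finish off with the observation that the columns of a matching matrix have pairwise disjoint supports. Your version is more spelled-out -- the paper leaves the verification of the matching-matrix special case to the reader -- and you correctly flag that the $\filta_\fparam$ appearing in $\DD^\bullet \filta_\fparam$ must be read as the codomain filtration $\filtb_\fparam$.
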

\begin{proof}
The commutative diagram that defines U-match decomposition induces the following pair of  diagrams:
\begin{equation*}
\begin{tikzcd}
    \filta_\fparam \arrow[r, "\MM_\bullet"] \arrow[d,  shift left=0.6ex, "\CC_\bullet"]  \arrow[d,  shift right=0.6ex, leftarrow, "\CC^\bullet" ']
    &
    \MM_\bullet \filta_\fparam  \arrow[d,  shift left=0.6ex, "\RR_\bullet"]  \arrow[d,  shift right=0.6ex, leftarrow, "\RR^\bullet" ']
\\
    \filta_\fparam \arrow[r, "\DD_\bullet"]
    & 
    \DD_\bullet \filta_\fparam
\end{tikzcd}
\quad
\quad
\begin{tikzcd}
    \MM^\bullet \filta_\fparam \arrow[r, leftarrow, "\MM^\bullet"] \arrow[d,  shift left=0.6ex, "\CC_\bullet"]  \arrow[d,  shift right=0.6ex, leftarrow, "\CC^\bullet" ']
    &
    \filta_\fparam  \arrow[d,  shift left=0.6ex, "\RR_\bullet"]  \arrow[d,  shift right=0.6ex, leftarrow, "\RR^\bullet" ']
\\
    \DD^\bullet \filta_\fparam \arrow[r, leftarrow, "\DD^\bullet"]
    & 
    \filta_\fparam
\end{tikzcd}
\end{equation*}
Since direct and inverse image of isomorphisms also preserve bases, the desired conclusion follows from the (straightforward to verify) special case where $\DD$ is a matching matrix.
\end{proof}

\begin{theorem}
\label{thm:bifiltrationbasis}
If $\RR\MM = \DD \CC$ is any U-match decomposition, then
    \begin{enumerate}
        \item The columns of $\CC$ contain a basis for every element of $\subspacelattice^n_\mathrm{dom}$.
        \item The columns of $\RR$ contain a basis for every element of $\subspacelattice^n_\mathrm{cod}$.        
    \end{enumerate}
\end{theorem}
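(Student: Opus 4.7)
The plan is to reduce Theorem \ref{thm:bifiltrationbasis} to Theorem \ref{thm:fundamentalsubspaces} and Lemma \ref{lem:subspace_bases} by induction on lattice-expression complexity. Since the lattice $\subspacelattice^n_\mathrm{dom}$ is, by definition, the smallest bounded sublattice of $\subspacelattice^n$ containing $\filta_*$ and $\DD^\bullet \filtb_*$, every one of its elements is obtained from these generators (together with $0$ and $\field^n$) by finitely many sums and intersections. It therefore suffices to show (i) that $\cols(\CC)$ contains a basis for each generator, and (ii) that the class of subspaces admitting a basis in $\cols(\CC)$ is closed under sum and intersection.

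For step (i), note that for each $\fparam$ the columns $\col_1(\CC), \ldots, \col_\fparam(\CC)$ are supported on $\upto{\fparam}$ since $\CC$ is upper unitriangular, and their restriction to rows $\upto{\fparam}$ is itself upper unitriangular, hence invertible. These $\fparam$ vectors are therefore a basis for $\filta_\fparam$ contained in $\cols(\CC)$. For the other family of generators, Theorem \ref{thm:fundamentalsubspaces} directly furnishes the basis $\cols(\CC) \cap \DD^\bullet \filtb_\fparam$ for $\DD^\bullet \filtb_\fparam$. The bounding elements $0$ and $\field^n$ are trivial: $\emptyset$ is a basis for $0$, and since $\CC$ is invertible, all of $\cols(\CC)$ is a basis for $\field^n$.

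For step (ii), observe that $\cols(\CC)$ is itself a basis for $\field^n$. Hence Lemma \ref{lem:subspace_bases}, applied with $V = \field^n$ and $T = \cols(\CC)$, tells us that whenever subspaces $W_1, W_2 \subseteq \field^n$ each admit bases contained in $\cols(\CC)$, so do $W_1 \cap W_2$ and $W_1 + W_2$. A straightforward induction on the length of a lattice expression now shows that $\cols(\CC)$ contains a basis for every element of $\subspacelattice^n_\mathrm{dom}$, establishing part (1).

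Part (2) follows by the same strategy with the roles of $\CC$ and $\RR$, $\filta$ and $\filtb$, and $\DD^\bullet$ and $\DD_\bullet$ swapped appropriately: upper unitriangularity of $\RR$ produces a basis for each $\filtb_\fparam$ in $\cols(\RR)$; Theorem \ref{thm:fundamentalsubspaces} produces a basis for each $\DD_\bullet \filta_\fparam$ in $\cols(\RR)$; and invertibility of $\RR$ makes $\cols(\RR)$ a basis for $\field^m$, so Lemma \ref{lem:subspace_bases} applies to close the class of realizable subspaces under sum and intersection. I do not anticipate any real obstacle beyond the mild bookkeeping of formalizing the induction on lattice expressions; the substantive content has already been packaged in Theorem \ref{thm:fundamentalsubspaces} and Lemma \ref{lem:subspace_bases}.
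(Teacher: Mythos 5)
Your proof is correct and takes essentially the same approach as the paper, which cites the same two ingredients (Theorem \ref{thm:fundamentalsubspaces} for the pullback generators and Lemma \ref{lem:subspace_bases} for closure under sums and intersections) and simply leaves the induction on lattice-expression length implicit. You also correctly read $\DD^\bullet\filtb_\fparam$ where the paper's statements contain the typo $\DD^\bullet\filta_\fparam$, and you supply the brief unitriangularity argument for the $\filta_\fparam$ generators that the paper omits.
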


\begin{proof}
  The columns of $\CC$ contain bases for each subspace $\filta_p$ and, by Theorem \ref{thm:fundamentalsubspaces}, each subspace $\DD^\bullet \filta_p$. Applying Lemma \ref{lem:subspace_bases}, the first claim follows.  The second claim is argued similarly.
\end{proof}

\subsection{Solving systems of linear equations}

As with LU decomposition, U-match decomposition can provide an effective means to solve systems of linear equations. Let $x$ be a vector, and define its support to be the set of indices of non-zero entries of $x$, $$\supp(x) = \{p_1 < p_2 \cdots < p_k \} = \{ p  \;:\; x[p] \neq 0\}$$ with total ordering inherited from the  indexing set for the vector's entries.

\begin{proposition}
\label{prop:solvingsystems}
Let $\RR \MM = \DD \CC$ be a U-match decomposition.  
\begin{enumerate}
    \item There exists a solution $x$ to equation $\DD x = b$ iff the column vector $\Ri b$ vanishes on non-pivot row indices, i.e. $(\Ri b)_{\nprows} = 0$.  In this case
    \begin{align}
        x : = \CC_{\allc \pcols} (\MM_{\prows \pcols})^{-1} (\Ri)_{\prows \allr } b
        \label{eq:primalsolution}
    \end{align}
satisfies the equation.  
    \item Dually, there exists a solution $x$ to equation $y \DD = c$ iff the row vector $c \CC_{\npcols}$ vanishes on non-pivot column indices, i.e. $ (c\CC)_{\npcols})=0$.  In this case 
    \begin{align}
        y : = c \CC_{\allc \pcols} (\MM_{ \prows \pcols })^{-1} (\Ri)_{\prows \allr} 
        \label{eq:dualsolution}        
    \end{align}    
satisfies the equation.    
\end{enumerate}
\end{proposition}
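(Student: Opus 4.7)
The plan is to reduce the solvability question to the essentially trivial case of a matching matrix. Rewriting the defining identity of U-match as $\DD = \RR \MM \Ci$, the equation $\DD x = b$ is equivalent to $\MM(\Ci x) = \Ri b$, since $\RR$ and $\CC$ are invertible. Setting $y = \Ci x$ yields a bijective correspondence between solutions of $\DD x = b$ and solutions of the much simpler equation $\MM y = \Ri b$; solving the latter and transporting back via $x = \CC y$ will produce the claimed formula.

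Because $\MM$ is a matching matrix, its image is exactly the subspace of $\field^m$ consisting of vectors supported on the pivot rows $\prows$: rows outside $\prows$ are identically zero, and conversely any vector supported on $\prows$ is hit by rescaling the appropriate columns of $\MM$. Hence $\MM y = \Ri b$ admits a solution iff $(\Ri b)_{\nprows} = 0$, which is the criterion asserted in (1). Under this condition, $\MM_{\prows \pcols}$ is a matching matrix with no zero row or column, hence a permutation of a diagonal matrix with nonzero diagonal, and is in particular invertible. A natural solution is then $y = \id^{n \times n}_{\allc \pcols}(\MM_{\prows \pcols})^{-1} (\Ri)_{\prows \allr} b$, supported on $\pcols$, and multiplying on the left by $\CC$ recovers the formula in Equation \eqref{eq:primalsolution}. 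Direct verification that this $x$ satisfies $\DD x = b$ amounts to expanding $\DD x = \RR \MM \Ci \CC_{\allc \pcols}(\cdots)$, using $\Ci \CC_{\allc \pcols} = \id^{n\times n}_{\allc \pcols}$ to collapse to $\RR \MM_{\allr \pcols}(\MM_{\prows \pcols})^{-1}(\Ri)_{\prows \allr} b$, cancelling the pivot block of $\MM$, and finally invoking the hypothesis $(\Ri b)_{\nprows} = 0$ to reassemble all of $\Ri b$ from its pivot-row restriction.

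The main obstacle is bookkeeping rather than mathematical depth: once one views U-match as a pair of invertible changes of basis reducing $\DD$ to $\MM$, both the solvability criterion and the solution formula are forced. The care needed lies in verifying that the slicings $\CC_{\allc \pcols}$, $(\Ri)_{\prows \allr}$, and $\MM_{\prows \pcols}$ compose correctly, and that the hypothesis is used in exactly the right place. Part (2), the dual statement for $y \DD = c$, follows by an entirely parallel argument on row vectors: write $c = (y\RR)\MM\Ci$, set $z = y\RR$, and observe that the image of $z \mapsto z \MM$ consists of row vectors supported on $\pcols$, so solvability is equivalent to $(c\CC)_{\npcols} = 0$; the analogous explicit solution is then produced by the same block manipulations. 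Alternatively, one may simply apply (1) to the anti-transposed U-match decomposition of $\DD^\perp$ from \S\ref{sec:antitransposesymmetry}, which swaps the roles of rows and columns and thereby reduces (2) to (1) directly.
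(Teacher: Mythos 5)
Your proof is correct and takes essentially the same approach as the paper: rewrite $\DD = \RR\MM\Ci$, observe that solvability of $\DD x = b$ is governed by whether $\Ri b$ lies in the image of the matching matrix $\MM$ (i.e.\ is supported on the pivot rows $\prows$), and confirm the explicit formula by direct substitution, with part~(2) handled by a parallel argument or by anti-transpose symmetry. The paper's version is considerably terser; your change-of-variables framing via $y = \Ci x$ and the explicit reassembly of $\Ri b$ from its pivot-row restriction simply make the same argument more transparent.
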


\begin{proof}
Claim 1 follows from the observation that $b = \DD x  \implies  \Ri b = \Ri \DD x =  \MM \Ci x $. Since the non-pivot rows of $\MM \Ci$ vanish, the non-pivot rows of $\Ri b$ must vanish, also.  Conversely, if we assume that these coefficients do in fact vanish, then the correctness of the proposed solution can be confirmed by direct substitution, if we recall the fact that $\DD = \RR \MM  \Ci$. The second claim is similar.
\end{proof}

\newcommand{\qhat}{\hat q }

\begin{corollary}  
\label{cor:minmaxsolutions}
Suppose that both problems stated in Proposition \ref{prop:solvingsystems} have solutions, and let $x$ and $y$ be the solutions defined in Equations \eqref{eq:primalsolution} and \eqref{eq:dualsolution}, respectively.
    \begin{enumerate}
    \item \emph{Minimality of primal solutions}.  Solution $x$ is minimal in the sense that $\max \supp(x) \le \max \supp (\hat x)$ for any $\hat x$ such that $D \hat x = b$.  
    \item \emph{Maximality of dual solutions}.  Solution $y$ is maximal in the sense that $\min \supp(y) \ge \min \supp( \hat y)$ for any $\hat y$ such that $\hat y D = c$.
    \end{enumerate}
\end{corollary}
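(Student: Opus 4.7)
The plan is to reframe both claims using the columns of $\CC$ and rows of $\Ri$ as bases, exploiting upper-unitriangularity to convert support bounds into statements about coefficient vectors. Specifically, because $\CC$ is upper unitriangular, for any vector $v \in \field^n$ with coefficient vector $\gamma := \Ci v$ one has $v[j] = \gamma[j] + \sum_{k > j} \CC[j,k]\gamma[k]$, so that $\max \supp(v) = \max \supp(\gamma)$; dually, since $\Ri$ is upper unitriangular, any row vector $w$ with $\delta := w \RR$ satisfies $\min \supp(w) = \min \supp(\delta)$, by the analogous calculation $w[j] = \delta[j] + \sum_{r < j} \delta[r] \Ri[r,j]$.

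For the primal minimality claim, I will set $\gamma := \Ci \hat x$ and multiply $\DD \hat x = b$ on the left by $\Ri$; using the U-match identity $\Ri \DD = \MM \Ci$, this becomes the matching equation $\MM \gamma = \Ri b$. Because $\MM$ is a matching matrix, such an equation determines $\gamma[c]$ uniquely for each $c \in \pcols$ and leaves $\gamma[c]$ completely free for $c \in \npcols$. A direct check using $\Ci \CC = I$ shows that the explicit solution $x$ from Equation \eqref{eq:primalsolution} satisfies $\Ci x = I_{\allc \pcols} (\MM_{\prows \pcols})^{-1} (\Ri)_{\prows \allr} b$, so its coefficient vector is supported on $\pcols$. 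Setting $M := \max \supp(\Ci x) \in \pcols$, any other solution $\hat x$ must have $(\Ci \hat x)[M] = (\Ci x)[M] \neq 0$, forcing $\max \supp(\hat x) \ge M = \max \supp(x)$. The dual maximality claim follows by the symmetric argument on row vectors: setting $\delta := \hat y \RR$, the equation $\hat y \DD = c$ transforms via $\DD \CC = \RR \MM$ into $\delta \MM = c \CC$, which pins down $\delta[r]$ for $r \in \prows$ and leaves it free for $r \in \nprows$; the explicit $y$ from Equation \eqref{eq:dualsolution} has $\delta^y$ supported in $\prows$, so a competing $\hat y$ can only introduce additional support on $\nprows$ at indices strictly below $\min \supp(\delta^y)$, giving $\min \supp(\hat y) \le \min \supp(y)$.

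The principal obstacle is simply bookkeeping: keeping straight which duality (columns of $\CC$ versus rows of $\Ri$, max- versus min-support, pivot rows versus pivot columns) applies to each half of the corollary. Once the equivalence between support bounds and basis coefficients is set up via upper-unitriangularity, each half reduces to the one-line observation that a matching-matrix equation fixes the matched coordinates of the unknown and leaves the unmatched coordinates entirely free.
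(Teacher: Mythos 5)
Your proof is correct, and it follows a genuinely different route from the paper's. The paper argues by contradiction: assuming $\qhat = \max\supp(\hat x) < q = \max\supp(x)$, it observes that $b$ must then lie in the column space of the truncated submatrix $\DD_{\upto{m}\upto{\qhat}}$, and applies Proposition~\ref{prop:solvingsystems}, claim 1, to the truncated U-match decomposition $\RR \MM_{\upto{m}\upto{\qhat}} = \DD_{\upto{m}\upto{\qhat}}\CC_{\upto{\qhat}\upto{\qhat}}$, whose matching no longer contains the pair $(p,q)$, to contradict $(\Ri b)[p] \neq 0$; the dual claim is handled by anti-transpose symmetry. You instead parametrize the full solution set directly: the change of variables $\gamma = \Ci\hat x$ converts $\DD\hat x = b$ into the matching equation $\MM\gamma = \Ri b$, which pins down $\gamma$ on $\pcols$ and leaves it free on $\npcols$; since the explicit $x$ has $\Ci x$ supported purely on $\pcols$, every competing solution's coefficient vector contains $\supp(\Ci x)$, and upper-unitriangularity of $\CC$ transfers the bound $\max\supp(\Ci\hat x) \ge \max\supp(\Ci x)$ back to $\max\supp(\hat x) \ge \max\supp(x)$. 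Your approach is self-contained and avoids having to verify a fresh U-match decomposition for the truncated matrix; it also exposes the structure clearly --- the solution set is an affine coset of $\Ker\DD$ parametrized by the unmatched coordinates, and the proposed $x$ is precisely the representative with those coordinates zeroed out. One small wording slip in the dual half: you say the competing $\hat y$ ``can only introduce additional support on $\nprows$ at indices strictly below $\min\supp(\delta^y)$,'' but in fact it can introduce support anywhere in $\nprows$. The correct and sufficient observation is simply that $\supp(\delta^y)\subseteq\supp(\delta^{\hat y})$, which already gives $\min\supp(\delta^{\hat y}) \le \min\supp(\delta^y)$ regardless of where the extra support lands.
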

\begin{proof}
Let $q = \max \supp(x)$.  By construction, $\MM[p,q] \neq 0$ for some $p$ such that $(\Ri b)[p] \neq 0$.  

Suppose, for a contradiction, that there exists a solution $\DD \hat x = b$ such that $\qhat = \max \supp(\hat x) < q$. 
Then $b$ lies in the column space of $\DD_{\upto{m} \upto{ \qhat}}$.  However, one can check that $\RR \MM_{\upto{m} \upto{ \qhat }} = \DD_{\upto{m}\upto{ \qhat }} \CC_{\upto{ \qhat }, \upto{ \qhat }}$ is a valid U-match decomposition, and the accompanying matching consists of all pairs $(r,c)$ such that $\MM[r,c] \neq 0$ and $c \le \qhat < q$.  These pairs do not include $(p,q)$ since $q$ is too big; however $(\Ri b)[p] \neq 0$ as we have already observed.  Thus, we derive a contradiction, via claim 1.  The corresponding statement for $y$ holds by anti-transpose symmetry. 
\end{proof}

In the special case of a linear equation $\CC x = b$, where $b \in \Ker(\DD)$, U-match decomposition provides an especially simple closed form solution. To state the solution in symbols, let us write
    \begin{align*}
        \del_T(v)[p]
        = 
        \begin{cases}
            0 & p \in T \\
            v[p] & else \\
        \end{cases}    
    \end{align*}
for any vector $v \in \field^m$ and any set $T \su \upto{m}$.    

\begin{proposition}[Solving for kernel vectors]
\label{prop:kerneldeletion}
Let $\RR \MM =  \DD \CC$ be a proper U-match decomposition.  Then
    \begin{align*}
        \DD b = 0 \quad &\implies \quad  \Ci b = \del_\pcols(b) \\
        c \DD = 0 \quad &\implies \quad  c \Ri = \del_{\prows}(c)
    \end{align*}
\end{proposition}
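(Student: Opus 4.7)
The plan is to verify each equality coordinate-by-coordinate, splitting $\upto{n}$ (respectively $\upto{m}$) into its matched and unmatched portions.  Properness fixes the unmatched coordinates immediately, while the hypothesis together with the matching identity pins down the matched coordinates.

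For the first claim, set $x := \Ci b$, so that $\CC x = b$.  When $k\in\npcols$, axiom \ref{item:proper1} gives $\row_k(\CC)=e_k^T$, hence $x[k]=(\CC x)[k]=b[k]$, matching $\del_\pcols(b)[k]$.  For $k\in\pcols$, left-multiply the hypothesis $\DD b=0$ by $\Ri$ and use the identity $\Ri\DD=\MM\Ci$ obtained from $\RR\MM=\DD\CC$; this yields $\MM x = 0$.  Because $\MM$ is a matching matrix, the scalar equation in row $r\in\prows$ reads $\MM[r,\mCol(r)]\cdot x[\mCol(r)]=0$, forcing $x[\mCol(r)]=0$.  Since $\mCol$ restricts to a bijection $\prows\to\pcols$, every pivot-column coordinate of $x$ is killed, as required.

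The second claim is established by the mirror argument with rows and columns swapped: axiom \ref{item:proper2} handles the unmatched rows $\nprows$, while the hypothesis $c\DD=0$ (post-multiplied by $\CC$ and simplified via $\DD\CC=\RR\MM$) combines with the row matching identity (Lemma \ref{lem:matchingidentityDUAL}) to dispose of the matched rows $\prows$.  Equivalently, one may deduce it from the first claim by applying the anti-transpose symmetry of \S\ref{sec:antitransposesymmetry}, under which $(\RR,\MM,\DD,\CC)\mapsto((\CC^{-1})^\perp,\MM^\perp,\DD^\perp,(\RR^{-1})^\perp)$ and the sets $\pcols,\prows$ are exchanged.

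The only real obstacle is notational bookkeeping — keeping the bijections $\mRow,\mCol$ and the various pivot/non-pivot sets straight, and taking care which operation matrix ($\RR,\Ri,\CC,$ or $\Ci$) appears at each stage of the manipulation.  No algebraic input beyond the matching identity and the proper axioms is required.
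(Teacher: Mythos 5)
Your proof is correct, and it takes a genuinely more elementary route than the paper's. The paper proves the first implication by invoking Theorem~\ref{thm:fundamentalsubspaces} to obtain $\{\col_p(\CC) : p \in \npcols\}$ as a basis for $\Ker(\DD)$, expanding $b$ in that basis, and then using axiom~\ref{item:proper1} to read off the coefficients. You bypass Theorem~\ref{thm:fundamentalsubspaces} entirely: you derive $\MM\Ci b = 0$ directly from the defining identity $\Ri\DD = \MM\Ci$, then exploit the matching structure of $\MM$ to kill the pivot-column coordinates of $x=\Ci b$ and axiom~\ref{item:proper1} to pin down the non-pivot ones. What the paper's route buys is a clean conceptual picture (``kernel vectors live in the non-pivot columns of $\CC$''); what yours buys is self-containment at the coordinate level, needing only the defining equation, the matching structure, and the proper axiom --- no appeal to the subspace-basis machinery. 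Both approaches dispatch the second implication by anti-transpose symmetry, exactly as the paper does. One small polish: since you rely crucially on $\mCol$ restricting to a bijection $\prows \to \pcols$, it is worth noting explicitly (as you implicitly do) that this is just the definition of a matching relation; otherwise the argument is complete.
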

\begin{proof}
Fix $b$ such that $\DD b = 0$.  The set $\{\col_p(\CC) : p \in \npcols \}$ is a basis for $\Ker(\DD)$, by Theorem \ref{thm:fundamentalsubspaces}, so there exists a unique vector of coefficients $a$ such that $b = \sum_{p \in \npcols} a[p] \col_p(\CC)$.  Axiom \ref{item:proper1} of proper U-match decomposition implies that $\col_p(\CC)[q] = \delta_{pq}$, the Dirac delta, for any $p, q \in \npcols$.  Therefore $a[p]$ must equal $b[p]$ for all $p \in \npcols$.  This establishes the first implication.  The second follows by anti-transpose symmetry.
\end{proof}

\begin{remark}
A special case of Proposition \ref{prop:kerneldeletion} was noted in some of the earliest papers on persistent homology computation \cite{ZCComputing05}, where it was leveraged to reduce the number of algebraic operations performed.  The idea  was  adapted in \cite{CKPersistent11, BKRClear14}, which introduce computation with a ``twist,'' clearing, and compression.
\end{remark}

\subsection{Related factorizations}

U-match decomposition has several simple relationships with other factorization schemes.

\subsubsection*{LU decomposition}

\begin{lemma}[LU decomposition]
\label{lem:lu}
Each U-match decomposition $\RR \MM =\DD \CC$ corresponds to an LU decomposition  $LP =  N U$  where 
    \begin{align*}
    P = Q \MM_{\prows  \pcols} && L = Q\RR_{\prows  \prows}Q && N = Q\DD_{\prows  \pcols} && U = \CC_{\pcols\pcols}   
    \end{align*}
and $Q$ is an exchange matrix.    
\end{lemma}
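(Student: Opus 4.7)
My plan is to reduce the identity $LP = NU$ to a single block equation by exploiting the involutivity of the exchange matrix, and then verify separately that $L$ is lower triangular, $U$ is upper triangular, and $P$ is a (generalized) permutation. Since $Q$ is an exchange matrix, $Q^2 = I$, so expanding the definitions gives $LP = (Q\RR_{\prows\prows}Q)(Q\MM_{\prows\pcols}) = Q\,\RR_{\prows\prows}\,\MM_{\prows\pcols}$ and $NU = Q\,\DD_{\prows\pcols}\,\CC_{\pcols\pcols}$. Left-cancelling $Q$, the target reduces to the block identity
\begin{align*}
\RR_{\prows\prows}\MM_{\prows\pcols} = \DD_{\prows\pcols}\CC_{\pcols\pcols}.
\end{align*}

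To prove this, I would partition both row and column index sets of the U-match equation according to pivot status. Since $\MM$ has nonzero entries only at pivot positions, $\MM_{\nprows\upto{n}} = 0$ and $\MM_{\upto{m}\npcols} = 0$; reading off the $(\prows,\pcols)$ block of $\RR\MM = \DD\CC$ therefore yields
\begin{align*}
\RR_{\prows\prows}\MM_{\prows\pcols} = \DD_{\prows\pcols}\CC_{\pcols\pcols} + \DD_{\prows\npcols}\CC_{\npcols\pcols},
\end{align*}
and the cross summand $\DD_{\prows\npcols}\CC_{\npcols\pcols}$ vanishes by the inner identities of Theorem~\ref{thm:umatchblockidentities}.

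Finally, I would verify the triangular/permutation structure so that the factorization actually merits the name \emph{LU}. As a principal submatrix of the upper unitriangular matrix $\RR$, the block $\RR_{\prows\prows}$ is itself upper unitriangular; conjugation by the exchange matrix $Q$ reflects indices via $i \mapsto k{+}1{-}i$, interchanging strict upper and strict lower triangular parts, so $L = Q\RR_{\prows\prows}Q$ is lower unitriangular. By the same principal-submatrix argument (without conjugation), $U = \CC_{\pcols\pcols}$ is upper unitriangular. Because $\MM_{\prows\pcols}$ is a generalized matching matrix (one nonzero per row and column), so is $P = Q\MM_{\prows\pcols}$.

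The main obstacle is the cross-term in the block expansion above: the matching structure of $\MM$ alone does not force $\DD_{\prows\npcols}\CC_{\npcols\pcols}$ to vanish, as one easily sees by constructing a non-proper U-match decomposition with a nonzero entry of $\CC$ in this block. The vanishing is a genuinely ``inner'' structural feature of U-match decomposition, supplied by Theorem~\ref{thm:umatchblockidentities} (equivalent, for proper decompositions, to axiom~\ref{item:proper1}); without that input, the bare U-match identity $\RR\MM = \DD\CC$ would leave the LU relation unestablished.
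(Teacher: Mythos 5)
Your reduction of $LP=NU$ to the block identity $\RR_{\prows \prows}\MM_{\prows \pcols}=\DD_{\prows \pcols}\CC_{\pcols\pcols}$, the block expansion isolating the cross term $\DD_{\prows \npcols}\CC_{\npcols \pcols}$, and the verification that $L$ is lower unitriangular, $U$ upper unitriangular, and $P$ a generalized permutation are all correct, and your argument is more careful than the paper's one-sentence proof, which simply asserts the block identity and attributes it to the upper-triangularity of $\RR$ and $\CC$. You have put your finger on a real defect in that terse justification: triangularity alone does not kill the cross term, and for a non-proper decomposition the claimed identity can genuinely fail. Concretely, take $\DD = \bigl[\!\begin{smallmatrix} 1 & 1 & 0 \\ 0 & 0 & 1 \end{smallmatrix}\!\bigr]$, $\CC = \bigl[\!\begin{smallmatrix} 1 & -1 & 0 \\ 0 & 1 & 1 \\ 0 & 0 & 1 \end{smallmatrix}\!\bigr]$, $\RR = \bigl[\!\begin{smallmatrix} 1 & 1 \\ 0 & 1 \end{smallmatrix}\!\bigr]$, $\MM = \bigl[\!\begin{smallmatrix} 1 & 0 & 0 \\ 0 & 0 & 1 \end{smallmatrix}\!\bigr]$; then $\RR\MM=\DD\CC$ with $\prows=(1,2)$, $\pcols=(1,3)$, $\npcols=(2)$, and $\RR_{\prows \prows}\MM_{\prows \pcols}=\bigl[\!\begin{smallmatrix} 1 & 1 \\ 0 & 1 \end{smallmatrix}\!\bigr]\neq \id=\DD_{\prows \pcols}\CC_{\pcols\pcols}$, precisely because $\DD_{\prows\npcols}\CC_{\npcols\pcols}=\bigl[\!\begin{smallmatrix} 1 \\ 0 \end{smallmatrix}\!\bigr]\bigl[\begin{smallmatrix} 0 & 1 \end{smallmatrix}\bigr]\neq 0$.

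So the lemma as printed, which quantifies over all U-match decompositions, does require the extra hypothesis of properness (or at minimum $\CC_{\npcols\pcols}=0$); under that hypothesis your proof is complete and correct. Two small refinements. First, the one fact you need, $\CC_{\npcols\pcols}=0$, is Axiom~\ref{item:proper1} verbatim and already appears as a block of Equation~\eqref{eq:operationblocks} in Lemma~\ref{lem:blockuppertriangularcombs}; invoking the full inner identities of Theorem~\ref{thm:umatchblockidentities} is heavier machinery than necessary. Second, the block expansion on the left-hand side uses only $\MM_{\nprows\upto{n}}=0$, not $\MM_{\upto{m}\npcols}=0$, and your final paragraph asserts without exhibiting that the cross term can be nonzero --- for the lemma to actually fail one needs $\DD_{\prows\npcols}$ to act nontrivially on the offending block of $\CC$, which the example above arranges.
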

\begin{proof}
Follows from the fact that $\RR_{\prows  \prows}\MM_{\prows  \pcols} =  \DD_{\prows  \pcols} \CC_{\pcols\pcols}$, since $\RR$ and $\CC$ are upper-triangular.
\end{proof}

\subsubsection*{Echelon form}

Proposition \ref{prop:echelonform} employs the matrix indexing notation described in \S\ref{sec:notation}.  The block structure of U-match decomposition is expanded at greater length in  \S\ref{sec:overallblockstructure}.

\begin{proposition}[Reduced echelon form]
\label{prop:echelonform}
Let 
\begin{align*}
    \Ri_{ech}
    :=
    \begin{array}{l |cc|}
        \multicolumn{1}{c}{}& \multicolumn{1}{c}{\nprows} & \multicolumn{1}{c}{\prows}  \\  \cline{2-3}
        \nprows & \id  &    \\   
        \prows &   & \MM_{\prows  \pcols}\AA^{-1}  \\ \cline{2-3}     
    \end{array}  
\end{align*}
Then the product $\Ri_{ech} \Ri$ is upper unitriangular, and up to (i) permutation of rows and columns, and (ii) scaling of leading row entries, the matrix product $\Ri_{ech} \Ri \DD$ is in reduced row echelon form. Likewise, if 
\begin{align*}
    \CC_{ech}
    :=
    \begin{array}{l |cc|}
        \multicolumn{1}{c}{}& \multicolumn{1}{c}{\pcols} & \multicolumn{1}{c}{\npcols}  \\  \cline{2-3}
        \pcols & (\MM_{\prows \pcols})^{-1} \Rirr \MM_{\prows  \pcols}  &    \\   
        \npcols &   & \id  \\ \cline{2-3}     
    \end{array}  
\end{align*}
Then the product $\CC\CC_{ech}$ is upper unitriangular, and up to (i) permutation of rows and columns, and (ii) scaling of leading row entries, the matrix product $\DD\CC\CC_{ech} $ is in reduced column echelon form.
\end{proposition}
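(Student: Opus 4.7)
The plan is to leverage the U-match identity $\Ri \DD = \MM \Ci$ (obtained by left-multiplying $\RR\MM=\DD\CC$ by $\Ri$), which turns the row-echelon claim into the block computation
\begin{equation*}
\Ri_{ech}\Ri\DD \;=\; \Ri_{ech}\MM\Ci.
\end{equation*}
Since $\MM$ is supported only on the block $(\prows,\pcols)$ and $\Ri_{ech}$ restricts to the identity on $\nprows$-rows, the product $\Ri_{ech}\MM$ vanishes outside the single block $(\prows,\pcols)$, where it equals $\MM_{\prows\pcols}\AA^{-1}\MM_{\prows\pcols}$. Using the inner identity from Theorem \ref{thm:umatchblockidentities} (which relates $\MM_{\prows\pcols}$, $\CC_{\pcols\pcols}$, $\RR_{\prows\prows}$, and $\DD_{\prows\pcols}$) together with the defining choice of $\AA$, I would show that right-multiplication by $\Ci$ collapses this expression to the matching matrix $\MM_{\prows\pcols}$ on the $(\prows,\pcols)$ sub-block of $\Ri_{ech}\MM\Ci$. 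After this collapse, nonzero rows of $\Ri_{ech}\Ri\DD$ lie only in $\prows$, each pivot column $c\in\pcols$ contains exactly one nonzero entry (at the matched row $\mRow(c)\in\prows$), and the remaining columns indexed by $\npcols$ are free. Applying the permutation that sends each matched row to the position of its paired pivot column and rescaling the leading entries to $1$ then yields the standard form of reduced row echelon.

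To establish the preliminary claim that $\Ri_{ech}\Ri$ is upper unitriangular, I would use the explicit block description of $\Ri$ from Proposition \ref{prop:1blockdiff}. In the block ordering $(\nprows,\prows)$, $\Ri$ has identity in the $(\nprows,\nprows)$ block, $-\DD_{\nprows\pcols}\Drki$ in the $(\nprows,\prows)$ block, zero in the $(\prows,\nprows)$ block, and $\Rirr$ in the $(\prows,\prows)$ block. Multiplying on the left by $\Ri_{ech}$ (block-diagonal in this ordering, with $(\prows,\prows)$ block $\MM_{\prows\pcols}\AA^{-1}$) gives a product whose $(\prows,\prows)$ block equals $\MM_{\prows\pcols}\AA^{-1}\Rirr$. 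Invoking the same inner identity characterizing $\AA$, this block simplifies to an upper unitriangular matrix in the natural ordering on $\prows$; combined with the vanishing $(\prows,\nprows)$ block and the identity $(\nprows,\nprows)$ block, this yields upper unitriangularity of the full product.

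The dual statement for $\CC\CC_{ech}$ and $\DD\CC\CC_{ech}$ follows from the row case by the anti-transpose symmetry of \S\ref{sec:antitransposesymmetry}: applying the first claim to the U-match decomposition $(\Ci)^\perp\MM^\perp=\DD^\perp(\Ri)^\perp$ of $\DD^\perp$ translates, via anti-transposition, into the column-echelon statement for $\DD$, with the roles of $\Ri_{ech}$ and $\CC_{ech}$ exchanged.

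The main technical obstacle will be invoking the inner identities from Theorem \ref{thm:umatchblockidentities} at precisely the right places so that both (i) $\MM_{\prows\pcols}\AA^{-1}\Rirr$ collapses to an upper unitriangular matrix and (ii) $\MM_{\prows\pcols}\AA^{-1}\MM_{\prows\pcols}\Ci_{\pcols\pcols}$ collapses to $\MM_{\prows\pcols}$. Both reductions hinge on the same block relation tying together $\MM_{\prows\pcols}$, $\CC_{\pcols\pcols}$, $\RR_{\prows\prows}$, and $\DD_{\prows\pcols}$ (with $\Ci_{\pcols\pcols}=\CC_{\pcols\pcols}^{-1}$ by properness of the decomposition), and once that relation is applied explicitly the remainder of the argument is routine block algebra with careful bookkeeping of the natural vs.\ $(\nprows,\prows)$ orderings.
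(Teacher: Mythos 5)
Your approach mirrors the paper's own (which simply cites the inner identities from Theorem~\ref{thm:umatchblockidentities}), and the portion of your argument concerning the echelon structure of $\Ri_{ech}\Ri\DD$ is sound: using $\Ri\DD=\MM\Ci$, the $\nprows$ rows vanish and the $(\prows,\pcols)$ block collapses to $\MM_{\prows\pcols}\AA^{-1}\MM_{\prows\pcols}\Ci_{\pcols\pcols}=\MM_{\prows\pcols}\Ai\MM_{\prows\pcols}\Mirk\AA=\MM_{\prows\pcols}$, a generalized permutation matrix; the dual statement follows by anti-transpose as you indicate.

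However, there is a genuine gap at the unitriangularity step. You assert that the $(\prows,\prows)$ block $\MM_{\prows\pcols}\AA^{-1}\Rirr = \MM_{\prows\pcols}\DD_{\prows\pcols}^{-1}$ ``simplifies to an upper unitriangular matrix,'' but this does not follow from the inner identities and in fact fails in general. What the inner identities give you is that $\Ci_{\pcols\pcols} = \Mirk\Rirr\DD_{\prows\pcols}$ is upper unitriangular (in the $\pcols$ ordering), and hence $\MM_{\prows\pcols}\DD_{\prows\pcols}^{-1}$ is upper triangular only after a \emph{row-and-column permutation} governed by the matching bijection $\mCol$ -- i.e., in the $\pcols^*$ ordering of Lemma~\ref{lem:atri}, not in the natural ordering on $\prows$. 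When the matching does not preserve the linear orders, the matrix is not upper triangular as stated. A concrete instance: take
\begin{align*}
\DD=\begin{pmatrix} 0 & 1 & 1 \\ 0 & 0 & 1 \\ 1 & 1 & 0 \end{pmatrix},
\end{align*}
which is already row-reduced, so $\RR=\id$, $\Rirr=\id$, $\AA=\Drk=\DD$, and $\MM_{\prows\pcols}=\MM=\begin{pmatrix} 0&1&0\\0&0&1\\1&0&0\end{pmatrix}$. Then
\begin{align*}
\MM_{\prows\pcols}\AA^{-1}\Rirr=\MM\,\DD^{-1}=\begin{pmatrix} 1 & -1 & 0 \\ 0 & 1 & 0 \\ -1 & 1 & 1 \end{pmatrix},
\end{align*}
which has a nonzero entry below the diagonal. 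So the step as written would fail. (The product $\Ri_{ech}\Ri\DD=\MM$ is still in reduced echelon form up to permutation and scaling, so the second claim of the proposition survives; only the unitriangularity of $\Ri_{ech}\Ri$ is at issue.) Note that the paper's own proof is a single sentence pointing to the inner identities and does not supply an argument for this clause either, so the gap you should flag here is in the asserted unitriangularity itself, not merely in your write-up of it: any proof along these lines would need to either weaken the claim to ``upper triangular up to permutation'' or impose an additional hypothesis ensuring the matching bijection is order-preserving.
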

\begin{proof}
Follows from the formulae for $\Ri \DD$ and $\DD \CC$ in Proposition \ref{thm:umatchblockidentities}.
\end{proof}

\subsubsection*{Right-reduction $(R = DV)$}

Right-reduction, or $R = DV$ decomposition as it is commonly known, was defined in \S\ref{subsec:computationalassymetry}.  We refer to coefficient  $R[ \low_R(p),p]$ as the pivot entry of matrix $R$ in column $p$; these so-called pivot entries are in fact the bona-fide pivot elements of a standardized Gauss-Jordan elimination process on $D$ \cite{cohen2006vines}.

We say that an $R=DV$ decomposition is \emph{unitriangular} if $V$ is unitriangular.  It is \emph{proper} if, in addition,  $V$ obeys the same  axiom as a domain matching basis in a proper U-match decomposition -- concretely, for each pivot element $(q, p)$, $\row_p(V)$ must be a unit vector.

\begin{lemma}[$R = DV$, general]
\label{lem:umatchRDVgeneral}
To each  $R = DV$ decomposition corresponds at least one $U$-match decomposition  $\RR \MM =  \DD V$, and vice versa.
\end{lemma}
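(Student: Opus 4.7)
The plan is to construct the correspondence explicitly in both directions. The forward direction requires a bit of work to extract the matching structure from an $R=DV$ decomposition, while the reverse direction is essentially a matter of reading off the reduced matrix from the U-match equation.

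For the forward direction, suppose $R = DV$ with $V$ upper unitriangular and $R$ reduced. First I would define the matching array: for each nonzero column $j$ of $R$, set $\MM[\low_R(j), j] := R[\low_R(j), j]$, and $\MM[i,j] := 0$ otherwise. By the reduced property, distinct nonzero columns of $R$ have distinct values of $\low_R$, so $\MM$ has at most one nonzero entry in each row and each column; hence $\MM$ is a matching matrix. Let $\prows$ be the set of indices $\low_R(j)$ for $j$ a nonzero column of $R$. I would then define $\RR$ column by column: for each $i = \low_R(j) \in \prows$, set $\col_i(\RR) := \col_j(R)/R[i,j]$; for each $i \notin \prows$, set $\col_i(\RR)$ to the $i$th standard unit column vector. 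To check that $\RR$ is upper unitriangular, observe that for pivot columns, the vector $\col_j(R)/R[i,j]$ has lowest nonzero entry at row $i$, with value $1$, so $\RR[i,i]=1$ and $\RR[k,i]=0$ for $k > i$; for non-pivot columns, this is immediate. Finally, direct computation gives $\col_j(\RR\MM) = \MM[\low_R(j),j]\cdot \col_{\low_R(j)}(\RR) = \col_j(R)$ for each nonzero column of $R$, and $\col_j(\RR\MM)=0$ for the other columns. Thus $\RR\MM = R = DV$, giving the desired U-match decomposition $\RR\MM = \DD V$.

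For the reverse direction, suppose $\RR \MM = \DD \CC$ is a U-match decomposition. Set $V := \CC$ and $R := \RR\MM$, so $R = \RR\MM = \DD\CC = \DD V$ and $V$ is invertible upper triangular by hypothesis. It remains to check that $R$ is reduced. For each $j \in \pcols$, the matching identity (Lemma~\ref{lem:matchingidentity}, read inside $\RR\MM$) gives $\col_j(R) = \MM[\pcolp_j, j] \cdot \col_{\pcolp_j}(\RR)$. Since $\RR$ is upper unitriangular, the lowest nonzero entry of $\col_{\pcolp_j}(\RR)$ occurs at row $\pcolp_j$ (where $\RR$ has a $1$ on the diagonal), so $\low_R(j) = \pcolp_j$. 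Because $\MM$ is a matching matrix, the assignment $j \mapsto \pcolp_j$ is injective on $\pcols$, so distinct nonzero columns of $R$ have distinct low-row indices; for $j \notin \pcols$, $\col_j(\MM)=0$ forces $\col_j(R)=0$. Hence $R$ is reduced, and $R = DV$ is a valid right-reduction.

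The main conceptual obstacle is confirming, in the forward direction, that the reduced property of $R$ is exactly what is needed to make $\RR$ upper unitriangular after normalizing each pivot column. Once this translation of ``distinct lowest nonzero entries'' into ``distinct unit diagonal slots'' is made precise, the rest of the argument is bookkeeping. Note that on the non-pivot rows/columns the choice of $\RR$ is not canonical (hence ``at least one''): any upper unitriangular extension would work, which already foreshadows the non-uniqueness discussed in \S\ref{sec:uniqueCOMB}.
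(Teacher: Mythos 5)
Your proof is correct and takes essentially the same approach as the paper's: in the forward direction, the paper speaks of transforming $R$ into a matching array by row operations from below (implicitly defining $\Ri$), while you construct $\RR$ explicitly column by column and then verify $\RR\MM = R$, which amounts to the same thing; in the converse direction, both you and the paper observe that $\RR\MM$ is automatically reduced, with you supplying the short verification via the matching identity. One small point of bookkeeping: you assume $V$ is upper \emph{uni}triangular, whereas the paper's definition of a right-reduction only asks $V$ to be invertible upper triangular. Since a U-match decomposition $\RR\MM = \DD V$ forces $V$ to be unitriangular, your added hypothesis is really necessary for the lemma as stated; the paper is slightly loose on this point and your version makes the needed assumption explicit.
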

\begin{proof}
Let $R = DV$  be given.  One can  transform $R$ into a matching array by elementary  operations which add lower rows to higher ones;  these operations yield a matrix equation $\MM = \Ri \DD V$ and left-multiplication by $\RR$ yields a U-match decomposition $\RR \MM = \DD V$. The converse follows from the observation that $\RR \MM$ is reduced for any U-match decomposition $\RR \MM =  \DD V$.
\end{proof}

\begin{lemma}[$R = DV$, proper]
\label{lem:RDVproperunique}
To each proper $R = DV$ decomposition corresponds a \emph{unique} proper $U$-match decomposition of form $\RR \MM =  \DD V$, and vice versa.
\end{lemma}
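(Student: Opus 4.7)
The plan is to leverage two uniqueness results already in hand: Theorem \ref{thm:matchingunique}, which fixes $\MM$ from $\DD$ alone, and Proposition \ref{prop:LRcorrespondence}, which says that in a proper U-match decomposition the domain and codomain \COB{}s determine one another. Together these will reduce the lemma to verifying that the properness axioms transfer cleanly between the two types of decomposition, and that the forward and reverse constructions are mutually inverse.

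For the forward direction I would start from a proper $R = \DD V$. Lemma \ref{lem:umatchRDVgeneral} supplies at least one U-match $\RR \MM = \DD V$, and Theorem \ref{thm:matchingunique} shows that $\MM$ is the unique matching array of $\DD$. Axiom \ref{item:proper1} is inherited by hypothesis, since the condition on $V$ coincides on both sides. For axiom \ref{item:proper2}, the matching identity (Lemma \ref{lem:matchingidentity}) combined with $\RR \MM = R$ forces, for each pivot $(q, p)$, the formula $\col_q(\RR) = \MM[q, p]^{-1} \col_p(R)$; the remaining (non-pivot) columns of $\RR$ are undetermined by $\RR\MM = R$ and may legitimately be set equal to the unit vectors $e_k$, which is exactly what \ref{item:proper2} requires. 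Uniqueness of such $\RR$ then follows from Proposition \ref{prop:LRcorrespondence}. For the reverse direction I would start from a proper U-match $\RR \MM = \DD V$ and set $R := \DD V = \RR \MM$. Upper unitriangularity of $V$ is given. Reducedness of $R$ follows by the matching identity: each nonzero column of $R$ is a scalar multiple of a column of the upper-unitriangular $\RR$ whose lowest nonzero entry lies in the matched row, and $\MM$ being a matching matrix forces distinct pivot rows. Properness of $R = DV$ is just axiom \ref{item:proper1} on $V$, inherited from the U-match hypothesis. Uniqueness here is trivial because $R$ is literally $\DD V$.

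That the two constructions are mutually inverse follows with no further work: forward-then-reverse recovers $R = \RR \MM = \DD V$, and reverse-then-forward preserves $V$, recovers the same $\MM$ by Theorem \ref{thm:matchingunique}, and hence the same $\RR$ by Proposition \ref{prop:LRcorrespondence}. The subtlest point I foresee will be verifying that the prescribed formula $\col_q(\RR) = \MM[q,p]^{-1} \col_p(R)$ actually fits inside an upper-unitriangular matrix, i.e., produces a column with a $1$ in row $q$ and zeros below; this comes down to the defining property $q = \low_R(p)$ together with the normalizing scalar $\MM[q,p]^{-1}$. All other verifications reduce immediately to earlier results in the paper.
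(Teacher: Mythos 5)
Your proof is correct and follows the same essential route as the paper's, invoking Lemma \ref{lem:umatchRDVgeneral} for existence and Proposition \ref{prop:LRcorrespondence} for uniqueness of $\RR$ given $V$. You go further than the paper's terse two-sentence proof by explicitly verifying that properness of $\RR$ can be arranged (setting the non-pivot columns, which are unconstrained by $\RR\MM = R$, to unit vectors) and by checking the two constructions are mutually inverse, which closes a small gap the paper leaves implicit.
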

\begin{proof}
Existence follows from Lemma \ref{lem:umatchRDVgeneral}.  The codomain \COB{}, $\RR$, is uniquely determined by the domain \COB{}, $V$, by  Proposition \ref{prop:LRcorrespondence}, so the decomposition is unique. 
\end{proof}

\begin{remark} The standard algorithm for $R = DV$ decomposition \cite{cohen2006vines} therefore yields a simple method to obtain proper U-match decompositions.
\end{remark}

\section{Block submatrices}
\label{sec:overallblockstructure}

Block submatrices play an important role in the story of U-match decomposition.  In particular, the \emph{inner identities} described below  hold the key to defining and proving the correctness of the  lazy look-up procedures which will be described in \S\ref{sec:lazylookup}; these procedures, in turn, constitute one of the central contributions of the present work.  

\underline{Nota Bene:} the content of this section is primarily technical.   The reader may simply wish to skim the {inner identities} (\S \ref{sec:inneridentities}), and return when the need arises.

To begin, let us remind the reader of the notational conventions introduced in \S\ref{sec:notation} and \S\ref{sec:indexing}.  Under these conventions, $\DD$ has the following block structure: 
	\begin{align*}
	\DD
	\quad
	\equiv
	\quad
    \begin{array}{c |ccc|ccc|}
    \multicolumn{1}{c}{}& \multicolumn{1}{c}{\pcol_1} & \multicolumn{1}{c}{\cdots} & \multicolumn{1}{c}{\pcol_{\npiv}} &  \multicolumn{1}{c}{ \npcol_1 } & \multicolumn{1}{c}{\cdots} & \multicolumn{1}{c}{\npcol_{n-k}} \\  \cline{2-7}
     \nprow_1 &  &  &  & &  &  \\   
     \vdots &  & \DD_{\nprows \pcols } &  &  & \DD_{\nprows \npcols} &\\ 
     \nprow_{m-k} &  &  & &  & &\\ \cline{2-7}
     \prow_1 &  &  &  & & &\\ 
     \vdots &   & \DD_{\prows \pcols } &  & & \DD_{ \prows \npcols} & \\ 
     \prow_{\npiv} &  &  &  & & &\\ \cline{2-7} 
    \end{array} 
	\end{align*}
The meaning of $\equiv$ is unambiguous in this case, since the row and column labels show  exactly how rows and columns of the matrix on the left must be permuted to obtain the matrix on the right.

\begin{lemma}
\label{lem:invertiblepivblock}
 The block submatrices $\MM_{\prows \pcols}$ and $\DD_{\prows \pcols}$ are invertible.
\end{lemma}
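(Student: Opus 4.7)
My plan is to prove the two invertibility claims in turn. The first, for $\MM_{\prows \pcols}$, is nearly immediate from the definitions of \S\ref{sec:indexing}: each row $r \in \prows$ of $\MM$ carries exactly one nonzero entry, located at column $\mCol(r) \in \pcols$, and symmetrically each column $c \in \pcols$ carries its unique nonzero entry at row $\mRow(c) \in \prows$. Restricting $\MM$ to rows $\prows$ and columns $\pcols$ therefore yields a $k \times k$ matrix with exactly one nonzero entry in each row and each column, i.e., a generalized permutation matrix, which is invertible.

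For the second claim my plan is to establish the closed-form factorization
\begin{align*}
    \DD_{\prows \pcols} \;=\; \RR_{\prows \prows} \, \MM_{\prows \pcols} \, (\Ci)_{\pcols \pcols}
\end{align*}
and observe that each of the three factors on the right is invertible. To derive the factorization, I rewrite the defining identity $\RR \MM = \DD \CC$ as $\DD = \RR \MM \Ci$ and set $P := \RR \MM$. Since $\MM$ vanishes on every column indexed by $\npcols$, so does $P$; this forces the sum expressing $(P \Ci)_{\prows \pcols}$ to collapse to a sum over $l \in \pcols$ only, giving $\DD_{\prows \pcols} = P_{\prows \pcols}\,(\Ci)_{\pcols \pcols}$. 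By the dual observation, since $\MM$ vanishes on every row indexed by $\nprows$, the sum expressing $P_{\prows \pcols} = (\RR \MM)_{\prows \pcols}$ collapses to a sum over $\prows$, giving $P_{\prows \pcols} = \RR_{\prows \prows}\,\MM_{\prows \pcols}$. Chaining the two produces the claimed factorization.

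To conclude, each factor is invertible. The submatrix $\RR_{\prows \prows}$ is a principal submatrix of the upper unitriangular matrix $\RR$ indexed by the strictly increasing sequence $\prows$, so it is itself upper unitriangular, hence invertible; the same reasoning applied to $\Ci$ (noting that the inverse of an upper unitriangular matrix is again upper unitriangular) shows that $(\Ci)_{\pcols \pcols}$ is invertible; and $\MM_{\prows \pcols}$ was handled in the first step. A product of invertibles is invertible, so $\DD_{\prows \pcols}$ is invertible.

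The main hazard in this proof is bookkeeping: one must carefully verify the two ``index-collapse'' identities, checking that restricting a matrix product to rows/columns drawn from increasing subsequences interacts correctly with the zero rows/columns of an internal factor. Once this manipulation is made precise, every other step reduces to standard facts about upper unitriangular matrices.
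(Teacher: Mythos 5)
Your proof is correct, but it takes a genuinely different route for the second claim than the paper does. The paper argues indirectly: since $\MM = \Ri \DD \CC$ and $\RR, \CC$ are invertible, $\DD$ has the same rank as $\MM$, namely $k$; one then shows by induction that discarding non-pivot rows (bottom to top) and non-pivot columns (left to right) does not reduce rank, so the remaining $k\times k$ matrix $\DD_{\prows\pcols}$ is full rank. You instead derive the explicit factorization $\DD_{\prows\pcols} = \RR_{\prows\prows}\,\MM_{\prows\pcols}\,(\Ci)_{\pcols\pcols}$ via two index-collapse steps (exploiting the zero rows and columns of $\MM$) and then note all three factors are invertible, the outer two being upper unitriangular since $\prows$ and $\pcols$ are strictly increasing. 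Your factorization is in fact (after rearranging) exactly the identity $\RR_{\prows\prows}\MM_{\prows\pcols} = \DD_{\prows\pcols}\CC_{\pcols\pcols}$ that the paper invokes later in Lemma \ref{lem:lu} relating U-match to LU decomposition, so your approach has the bonus of proving that identity along the way. The paper's rank argument is shorter to state but leaves the inductive step implicit; your version is more self-contained and yields a constructive formula for $\DD_{\prows\pcols}^{-1}$ as a by-product.
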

\begin{proof}
By Equation (\ref{eq_matchcolumn}), the matrix $\MM_{\prows \pcols}$ is a generalized permutation matrix.  Since $\MM = \Ri \DD \CC$ one argues, by induction, that each  non-pivot row of $\DD$ (working from the bottom up) can be deleted without reducing the rank of $\DD$.  A similar argument holds for columns, working left to right.  The (full-rank) matrix that remains is $\DD_{\prows \pcols}$.
\end{proof}

Now let
    \begin{align*}
        \pcols^* = (\pcolp_1, \ldots, \pcolp_k)
    \end{align*}
denote the sequence of matched \underline{row} indices arranged in the order of their corresponding column indices.  Unlike the sequence $\pcols = (\pcol_1, \ldots, \pcol_k)$, sequence $\pcols^*$ is \underline{not} monotone increasing, in general.

\begin{lemma}
\label{lem:atri}
 The matrix $(\Ri \DD)_{\pcols^* \pcols}$ is invertible and upper triangular (though possibly not \underline{uni}triangular):
    \begin{align*}
        (\Ri \DD)_{\pcols^* \pcols} = 
        \begin{array}{l |cccc|}
        \multicolumn{1}{r}{} & \multicolumn{1}{c}{\pcol_2} & \multicolumn{1}{c}{\pcol_1} & \multicolumn{1}{c}{\cdots} &  \multicolumn{1}{c}{\pcol_\npiv} \\  \cline{2-5}
        \pcolp_1 & * & * & \cdots & *     \\   
        \pcolp_2 &   & * &  &  * \\   
        \vdots & &   & \ddots  & \vdots    \\   
        \pcolp_\npiv &   &   &  & *     \\  \cline{2-5}     
    \end{array}  
    \end{align*}
\end{lemma}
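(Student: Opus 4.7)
The plan is to use the U-match identity in the form $\Ri\DD=\MM\Ci$ and exploit the fact that $\MM$ has exactly one nonzero entry per matched row.  Rewriting entrywise, for any matched row index $\pcolp_p$ and matched column index $\pcol_q$,
\begin{align*}
(\Ri\DD)[\pcolp_p,\pcol_q]
\;=\;
(\MM\Ci)[\pcolp_p,\pcol_q]
\;=\;
\sum_{j}\MM[\pcolp_p,j]\,\Ci[j,\pcol_q].
\end{align*}
Since $\MM$ is a matching matrix, its $\pcolp_p$-th row has a single nonzero entry, located in column $\mCol(\pcolp_p)=\pcol_p$.  Thus the sum collapses to
\begin{align*}
(\Ri\DD)[\pcolp_p,\pcol_q]
\;=\;
\MM[\pcolp_p,\pcol_p]\cdot\Ci[\pcol_p,\pcol_q].
\end{align*}

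Next I would translate this into the desired block statement.  Because the sequence $\pcols=(\pcol_1<\cdots<\pcol_\npiv)$ is increasing, the condition $p>q$ is equivalent to $\pcol_p>\pcol_q$.  Upper-unitriangularity of $\Ci$ then forces $\Ci[\pcol_p,\pcol_q]=0$ whenever $p>q$, and $\Ci[\pcol_p,\pcol_p]=1$ on the diagonal.  Consequently the $(p,q)$ entry of $(\Ri\DD)_{\pcols^*\pcols}$ vanishes for $p>q$, giving upper triangularity, while the diagonal entry equals the matching coefficient $\MM[\pcolp_p,\pcol_p]$, which is nonzero by the very definition of a pivot pair.  Invertibility follows at once from nonvanishing of the diagonal of a triangular matrix.

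There is no real obstacle here; the only delicate point is keeping the two orderings on matched indices straight, namely $\prows=(\prow_1<\cdots<\prow_k)$ ordered by row index versus $\pcols^*=(\pcolp_1,\ldots,\pcolp_k)$ ordered according to the partner column sequence $\pcols$.  Because the argument above reads off entries of $(\Ri\DD)$ along the row-permutation induced by $\pcols^*$ but along the monotone column sequence $\pcols$, the triangular pattern arises from $\Ci$ alone and does not depend on the (possibly non-monotone) row reordering.  If a cleaner closed form is wanted, the computation above can be summarized as the factorization
\begin{align*}
(\Ri\DD)_{\pcols^*\pcols}
\;=\;
\MM_{\pcols^*\pcols}\cdot\Ci_{\pcols\pcols},
\end{align*}
where the first factor is diagonal with nonzero entries and the second is upper unitriangular; this immediately yields both claimed properties.
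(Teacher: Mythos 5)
Your proof is correct, and it uses the same key ingredient as the paper's: rewriting the U-match identity as $\Ri\DD = \MM\Ci$ and exploiting that each matched row of $\MM$ has a single nonzero entry. The paper proves triangularity in essentially the same way (column-support version of your entrywise computation), but for invertibility it detours through the block identity
$(\Ri\DD)_{\prows\pcols}\CC_{\pcols\pcols} = \Ri_{\prows\upto{m}}\DD\,\CC_{\upto{n}\pcols} = \MM_{\prows\pcols}$.
That intermediate equality requires $\CC_{\npcols\pcols}=0$, which is guaranteed by axiom (A1) of a \emph{proper} decomposition but can fail otherwise; the lemma as stated imposes no properness hypothesis, and indeed one can build U-match decompositions where the paper's intermediate identity breaks down. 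Your direct route — collapsing the sum to $(\Ri\DD)[\pcolp_p,\pcol_q]=\MM[\pcolp_p,\pcol_p]\,\Ci[\pcol_p,\pcol_q]$, i.e., the factorization $(\Ri\DD)_{\pcols^*\pcols}=\MM_{\pcols^*\pcols}\Ci_{\pcols\pcols}$ with a diagonal first factor and an upper-unitriangular second — needs no such hypothesis and so proves the statement in the generality actually claimed. This is a genuine (if modest) improvement over the paper's argument.
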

\begin{proof}
Rearranging the U-match decomposition of $\DD$, we have $\Ri \DD = \MM \Ci$.  Since $(\Ci)_{\upto{n}\{ \pcol_p\}}$ vanishes below row $\pcol_p$, the product $(\MM \Ci)_{\upto{n}\{ \pcol_p\}}$ has support on the set of pivot row indices that pair with pivot columns $\pcol_q$, for $q \le p$.  This establishes triangularity.  To see that $(\Ri \DD)_{\prows \pcols}$ is invertible, observe that $(\Ri \DD)_{\pcols^* \pcols}$ and $(\Ri \DD)_{\prows \pcols}$ are equal up to permutation of rows, and 
$$(\Ri \DD)_{\prows \pcols} \cdot \CC_{\pcols \pcols} = \Ri_{\prows \upto{m}} \cdot \DD \cdot \CC_{\upto{n} \pcols} = \MM_{ \prows \pcols}$$ is a generalized permutation matrix.
\end{proof}

\begin{lemma} 
\label{lem:blockuppertriangularcombs}
Suppose that $\RR \MM = \DD \CC$ is a \emph{proper} U-match decomposition.  Matrices $\RR$ and $\CC$ then have the following permuted block structure.  Moreover,  $\RR_{\prows  \prows}$ and $\CC_{\pcols\pcols}$ are upper unitriangular.
	\begin{align}
    \begin{array}{c |ccc|ccc|}
    \multicolumn{1}{c}{}& \multicolumn{1}{c}{\nprow_1} & \multicolumn{1}{c}{\cdots} & \multicolumn{1}{c}{\nprow_{m-k}} &  \multicolumn{1}{c}{ \prow_1 } & \multicolumn{1}{c}{\cdots} & \multicolumn{1}{c}{\prow_\npiv} \\  \cline{2-7}
     \nprow_1 &  &  &  & &  &  \\   
     \vdots &  & \id &  &  & \RR_{\nprows  \prows} &\\ 
     \nprow_{m-k} &  &  & &  & &\\ \cline{2-7}
     \prow_1 &  &  &  & & &\\ 
     \vdots &   & 0 &  & & \RR_{\prows  \prows} & \\ 
     \prow_\npiv &  &  &  & & &\\ \cline{2-7}     
    \end{array} 
    &&
    \begin{array}{c |ccc|ccc|}
    \multicolumn{1}{c}{}& \multicolumn{1}{c}{\pcol_1} & \multicolumn{1}{c}{\cdots} & \multicolumn{1}{c}{\pcol_\npiv} &  \multicolumn{1}{c}{ \npcol_1 } & \multicolumn{1}{c}{\cdots} & \multicolumn{1}{c}{\npcol_{n-k}} \\  \cline{2-7}
     \pcol_1 &  &  &  & &  &  \\   
     \vdots &  & \CC_{\pcols\pcols} &  &  & \CC_{\pcols\npcols} &\\ 
     \pcol_\npiv &  &  & &  & &\\ \cline{2-7}
     \npcol_1 &  &  &  & & &\\ 
     \vdots &   & 0 &  & & \id & \\ 
     \npcol_{n-k} &  &  &  & & &\\ \cline{2-7}     
    \end{array}    
	\label{eq:operationblocks}    
	\end{align}
\end{lemma}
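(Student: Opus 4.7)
The plan is to derive the block structure of $\RR$ and $\CC$ by directly invoking the two properness axioms \ref{item:proper1} and \ref{item:proper2}, and then to obtain the upper unitriangularity of the pivot-indexed diagonal blocks from the upper unitriangularity of $\RR$ and $\CC$ themselves.

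First I would handle $\RR$ using axiom \ref{item:proper2}. Since $\row_k(\MM) = 0$ for every $k \in \nprows$, the axiom gives $\col_k(\RR) = e_k$, the $k$th standard unit column vector, for each such $k$. Reading off the entries of these unit columns at positions indexed by $\nprows$ and $\prows$ respectively yields $\RR_{\nprows \nprows} = \id$ (since $\RR[\nprow_i,\nprow_j] = \delta_{\nprow_i \nprow_j} = \delta_{ij}$) and $\RR_{\prows \nprows} = 0$ (since $\prows \cap \nprows = \emptyset$), which are exactly the top-left and bottom-left blocks in the displayed permutation of $\RR$. A symmetric argument with axiom \ref{item:proper1} shows that $\row_k(\CC) = e_k^T$ for each $k \in \npcols$, so $\CC_{\npcols \npcols} = \id$ and $\CC_{\npcols \pcols} = 0$, giving the bottom-right and bottom-left blocks in the displayed permutation of $\CC$.

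To finish, I would check that $\RR_{\prows \prows}$ and $\CC_{\pcols \pcols}$ are upper unitriangular. The key observation is that the index sequences $\prows = (\prow_1 < \cdots < \prow_\npiv)$ and $\pcols = (\pcol_1 < \cdots < \pcol_\npiv)$ are strictly increasing, so the principal submatrix of an upper unitriangular matrix extracted along such a monotone subsequence is itself upper unitriangular: for $a > b$ one has $\prow_a > \prow_b$, forcing $\RR[\prow_a,\prow_b] = 0$, and on the diagonal $\RR[\prow_a,\prow_a] = 1$; an identical argument handles $\CC_{\pcols\pcols}$. I don't anticipate any real obstacle; the entire argument amounts to unpacking the properness axioms and exploiting monotonicity of the four index sequences. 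The only mild pitfall is careful bookkeeping between the two different row/column permutation orderings displayed in the block forms of $\RR$ and $\CC$, which is settled by matching subscripts against the labels printed beside each block.
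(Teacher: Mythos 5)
Your proof is correct and takes the same route as the paper: you read off the unit columns of $\RR$ from the properness axiom, the unit rows of $\CC$ from the other axiom, and then extract upper unitriangularity of $\RR_{\prows\prows}$ and $\CC_{\pcols\pcols}$ from the monotonicity of $\prows$ and $\pcols$. In fact your bookkeeping is slightly more careful than the paper's own proof, which cites Axiom \ref{item:proper1} for $\RR$ and Axiom \ref{item:proper2} for $\CC$ (swapped relative to their definitions) and writes $\nprows$ where $\npcols$ is meant; you cite the axioms in the intended way.
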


\begin{proof}
Blocks $\RR_{\prows \prows}$ and $\CC_{\pcols \pcols}$ are upper triangular because $\prows$ and $\pcols$ are strictly increasing sequences.
The columns of $\RR$ indexed by $\nprows$ are standard unit vectors, by Axiom \ref{item:proper1} of proper U-match decomposition.  Likewise, the rows of $\CC$ indexed by $\nprows$ are standard unit vectors by Axiom \ref{item:proper2} of proper U-match decomposition.  
\end{proof}

In permuted block form, therefore, a {proper} U-match decomposition $\RR \MM = \DD \CC$ becomes
    \begin{align}
    \begin{array}{l |cc|}
        \multicolumn{1}{c}{}& \multicolumn{1}{c}{\nprows} & \multicolumn{1}{c}{\prows}  \\  \cline{2-3}
        \nprows & \id & \RR_{\nprows\prows}   \\   
        \prows &  &  \RR_{\prows  \prows} \\ \cline{2-3}     
    \end{array}     
    &&    
    \begin{array}{l |cc|}
        \multicolumn{1}{c}{}& \multicolumn{1}{c}{\pcols} & \multicolumn{1}{c}{\npcols}  \\  \cline{2-3}
        \nprows &  &    \\   
        \prows & \MM_{\prows  \pcols} &   \\ \cline{2-3}     
    \end{array}     
    &&
    =
    &&
    \begin{array}{l |cc|}
        \multicolumn{1}{c}{}& \multicolumn{1}{c}{\pcols} & \multicolumn{1}{c}{\npcols}  \\  \cline{2-3}
        \nprows & \DD_{\nprows  \pcols} & \DD_{\nprows  \npcols}   \\   
        \prows & \DD_{\prows  \pcols} &  \DD_{\prows  \npcols} \\ \cline{2-3}     
    \end{array}     
    &&
    \begin{array}{l |cc|}
        \multicolumn{1}{c}{}& \multicolumn{1}{c}{\pcols} & \multicolumn{1}{c}{\npcols}  \\  \cline{2-3}
        \pcols & \CC_{\pcols\pcols} & \CC_{\pcols\npcols}   \\   
        \npcols &  &  \id \\ \cline{2-3}     
    \end{array}   
    \label{eq:umatchblocks}    
	\end{align}    
\noindent where blank entries represent 0.  

\begin{remark}
We use the symbols $\prows$ (matched row indices) and $\nprows$ (unmatched row indices) to label the rows and columns of the square that represents matrix $\RR$ in Equation \ref{eq:umatchblocks}.  Note that the symbol for matched rows appears below and to the left of the symbol for unmatched rows.  This pattern reverses for matrix $\CC$: the symbol for matched indices, $\pcols$, appears above and to the left of the symbol for unmatched indices, $\npcols$.  
This counter-intuitive notational convention is in fact highly natural in the context of anti-transpose symmetry, 
c.f. Remark \ref{rmk:pivotnonpivotorder}.
\end{remark}

\subsection{Inner identities}
\label{sec:inneridentities}

The following identities demonstrate that every part of a proper U-match decomposition $(\RR, \MM, \DD, \CC)$ can be recovered from $\DD$ and the block submatrices $\MM_{\prows \pcols}$ and  $\Rirr$.  In  \S\ref{sec:lazylookup},  we will use these identities to prove correctness of a lazy look-up scheme that solves for any row or column of $\RR, \Ri, \CC$ or $\Ci$ via a single application of back-substitution.  Given their special role, we therefore refer to Equations\footnote{These are note, in fact, equations but equivalences, as discussed in \S\ref{sec:notation}.}  \eqref{eq:inneridentity1} - \eqref{eq:inneridentity4} as  \emph{inner identities}.

\begin{theorem}(Inner identities)
\label{thm:umatchblockidentities}
Posit a proper U-match decomposition $\RR \MM = \DD \CC$, and let $\AA = \Rirr \DD_{\prows  \pcols} = (\Ri \DD)_{\prows \pcols}$.  Then the  following matrix identities hold, where
    \begin{enumerate}
        \item blank entries indicate zero blocks
        \item permutations on the rows and columns of $\RR, \Ri, \CC, \Ci, \Ri \DD,$ and $\DD \CC$ are indicated by the sequences $\prows, \nprows, \pcols, \npcols, \upto{m},\upto{n}$, which appear as labels for blocks of row and column indices.
    \end{enumerate}
\begin{alignat}{2}
    &
    \CC 
    \;
    &&\equiv 
    \;
    \begin{array}{l |cc|}
        \multicolumn{1}{c}{}& \multicolumn{1}{c}{\pcols} & \multicolumn{1}{c}{\npcols}  \\  \cline{2-3}
        \pcols & \Ai \MM_{\prows \pcols} &  - \Ai \Rirr \DD_{\prows \npcols}        \\   
        \npcols &  &  \id \\ \cline{2-3}     
    \end{array}     
    \label{eq:inneridentity1}    
    \\[1em]
    &
    \Ci
    \;
    &&\equiv 
    \;
    \begin{array}{l |c|l}
        \multicolumn{1}{c}{}
        &
        \multicolumn{1}{c}{\upto{n}} 
        &  \multicolumn{1}{c}{} \\  \cline{2-2}
        \pcols & \MM_{\prows \pcols}^{-1} \Rirr \DD_{\prows  \upto{n}}   \\ 
        \cline{2-2}
        \npcols & \id_{\npcols \upto{n} }
        &   \\ \cline{2-2}    
    \end{array}   
    \label{eq:inneridentity2}     
    \\[1em]
    &
    \Ri 
    \;
    &&\equiv 
    \;
    \begin{array}{l |cc|}
        \multicolumn{1}{c}{}& \multicolumn{1}{c}{\nprows} & \multicolumn{1}{c}{\prows}  \\  \cline{2-3}
        \nprows & \id &  -\DD_{\nprows  \pcols} \DD_{\prows \pcols}^{-1}   \\
        \prows &  &  \Rirr \\ \cline{2-3}     
    \end{array}  
    \label{eq:inneridentity3}     
    \\[1em]
    &
    \RR
    \;
    &&\equiv 
    \;
    \begin{array}{l |c|c|}
        \multicolumn{1}{c}{}& \multicolumn{1}{c}{\nprows} & \multicolumn{1}{c}{\prows}  
        \\  \cline{2-3}
         \upto{m} 
        &
        \id_{\upto{m} \nprows} 
         &
         \DD_{\upto{m}  \pcols} \Ai
         \\ \cline{2-3}     
    \end{array}          
    \label{eq:inneridentity4}
\end{alignat}
\end{theorem}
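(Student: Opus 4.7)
The plan is to expand the U-match equation $\RR \MM = \DD \CC$ in permuted block form and solve a small system for the unknown blocks of $\CC$ and $\RR$, then obtain the two identities for $\Ri$ and $\Ci$ by inverting block-triangular matrices. I would rely throughout on the block structure already established in Lemma \ref{lem:blockuppertriangularcombs}: in a proper U-match decomposition, $\RR$ is block upper-triangular with $\RR_{\nprows\nprows} = \id$ and $\RR_{\prows\nprows} = 0$, $\CC$ is block upper-triangular with $\CC_{\npcols\npcols} = \id$ and $\CC_{\npcols\pcols} = 0$, and $\MM$ is supported in the $\prows\pcols$ block. The invertibility of $\MM_{\prows\pcols}$ and $\DD_{\prows\pcols}$ supplied by Lemma \ref{lem:invertiblepivblock} is what makes every inversion step below legitimate, and invertibility of $\RR_{\prows\prows}$ (hence of $\AA$) follows since $\RR$ is invertible and has $\id$ in the $\nprows\nprows$ block.

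First, I would read off from $\RR\MM = \DD\CC$, block by block in the permuted form \eqref{eq:umatchblocks}, four matrix equations:
\begin{align*}
    \RR_{\prows\prows}\MM_{\prows\pcols} &= \DD_{\prows\pcols}\CC_{\pcols\pcols}, &
    0 &= \DD_{\prows\pcols}\CC_{\pcols\npcols} + \DD_{\prows\npcols},\\
    \RR_{\nprows\prows}\MM_{\prows\pcols} &= \DD_{\nprows\pcols}\CC_{\pcols\pcols}, &
    0 &= \DD_{\nprows\pcols}\CC_{\pcols\npcols} + \DD_{\nprows\npcols}.
\end{align*}
The first equation, combined with the definition $\AA = \Rirr\DD_{\prows\pcols}$, yields $\CC_{\pcols\pcols} = \AA^{-1}\MM_{\prows\pcols}$ and $\RR_{\prows\prows} = \DD_{\prows\pcols}\AA^{-1}$; the second yields $\CC_{\pcols\npcols} = -\DD_{\prows\pcols}^{-1}\DD_{\prows\npcols} = -\AA^{-1}\Rirr\DD_{\prows\npcols}$, where the rewriting uses $\AA^{-1}\Rirr = \DD_{\prows\pcols}^{-1}$; and the third yields $\RR_{\nprows\prows} = \DD_{\nprows\pcols}\AA^{-1}$. (The fourth equation is automatically consistent by the third.) Combining $\RR_{\prows\prows}$ and $\RR_{\nprows\prows}$ with the trivial block $\RR_{\upto{m}\nprows} = \id_{\upto{m}\nprows}$ gives \eqref{eq:inneridentity4}, and the two formulas for $\CC_{\pcols\pcols}$ and $\CC_{\pcols\npcols}$ together with $\CC_{\npcols\npcols}=\id$ and $\CC_{\npcols\pcols}=0$ give \eqref{eq:inneridentity1}.

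To obtain \eqref{eq:inneridentity3}, I would apply the standard block inversion formula to the upper-triangular block form of $\RR$:
\[
\RR \equiv \begin{pmatrix}\id & \RR_{\nprows\prows}\\ 0 & \RR_{\prows\prows}\end{pmatrix}
\quad\Longrightarrow\quad
\Ri \equiv \begin{pmatrix}\id & -\RR_{\nprows\prows}\Rirr\\ 0 & \Rirr\end{pmatrix},
\]
and then simplify $-\RR_{\nprows\prows}\Rirr = -\DD_{\nprows\pcols}\AA^{-1}\Rirr = -\DD_{\nprows\pcols}\DD_{\prows\pcols}^{-1}$. The same block inversion strategy applied to $\CC$ gives \eqref{eq:inneridentity2}: the $\pcols$-row block of $\Ci$ is $(\CC_{\pcols\pcols}^{-1},\,-\CC_{\pcols\pcols}^{-1}\CC_{\pcols\npcols})$, which simplifies to $\MM_{\prows\pcols}^{-1}\AA\cdot(\id,\,\Rirr\DD_{\prows\npcols}) = \MM_{\prows\pcols}^{-1}\Rirr\DD_{\prows\upto{n}}$ after substituting the expressions for $\CC_{\pcols\pcols}$ and $\CC_{\pcols\npcols}$.

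There is no deep difficulty here; the entire argument is a sequence of block manipulations, and the only substantive input is the existence of the inverses, which Lemma \ref{lem:invertiblepivblock} has already supplied. The main hazard is bookkeeping: the $\prows/\nprows$ and $\pcols/\npcols$ row- and column-orderings appear in opposite senses for $\RR$ and $\CC$ (as noted in the remark following Equation \eqref{eq:umatchblocks}), so care is needed to keep the block products in the correct order and to verify that the identity and zero blocks land where claimed in \eqref{eq:inneridentity1}--\eqref{eq:inneridentity4}.
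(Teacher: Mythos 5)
Your proof is correct and takes essentially the same approach as the paper's — a direct block-matrix expansion of $\RR\MM = \DD\CC$, followed by inversion of the block-triangular forms. The only substantive difference is organizational: the paper first derives \eqref{eq:inneridentity2} from the dual identity $\MM\Ci = \Ri\DD$ and proves \eqref{eq:inneridentity1} and \eqref{eq:inneridentity3} by a guess-and-verify check that the proposed matrices multiply to $\id$, whereas you obtain all four identities by solving the four block equations and invoking the standard block-inversion formula, which is arguably a bit cleaner. One small slip to fix: in the simplification of the $\pcols$-row block of $\Ci$, the intermediate expression should read $\MM_{\prows\pcols}^{-1}\cdot(\AA,\ \Rirr\DD_{\prows\npcols})$ rather than $\MM_{\prows\pcols}^{-1}\AA\cdot(\id,\ \Rirr\DD_{\prows\npcols})$ (as written, distributing $\AA$ into the second slot gives $\MM_{\prows\pcols}^{-1}\AA\Rirr\DD_{\prows\npcols}$, which is not the correct block); your final answer $\MM_{\prows\pcols}^{-1}\Rirr\DD_{\prows\upto{n}}$ is nevertheless right.
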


\begin{proof} The proof is deferred to Appendix \ref{sec:proofofmatrixidentities}.
\end{proof}

\section{Compressed storage and lazy retrieval}
\label{sec:lazyumatch}

Here we present an effective approach to the following problem: given a U-match decomposition $\RR \MM = \DD \CC$, how can one store the associated matrices with as little memory as possible?  That is, how efficiently can one compress $\RR, \Ri, \CC, \Ci, \DD, \MM$ in storage, while maintaining the ability to quickly read any row or column, when it is needed?

On the one hand, $\DD$ and $\MM$ offer few opportunities for compression:  since we assume that  $\DD$ will be provided as input, its data structure is fixed \emph{a-priori}; $\MM$ has at most one nonzero entry per row and column, so  it presents relatively little to compress.  On the other hand, matrices $\RR, \Ri, \CC, \Ci$, prove excellent candidates for compression. Our storage scheme records \underline{\smash{none}} of these  matrices.  Rather, this approach
    \begin{enumerate}
        \item stores only three arrays in memory: $\DD, \MM$, and $\Rirr$;
        \item reconstructs the rows and columns of $\RR, \Ri, \CC, \Ci$ in a lazy fashion.
    \end{enumerate}

Under this scheme, \underline{\smash{retrieval is fast}}: any row or column can be reconstructed within a constant scalar multiple of matrix-vector multiplication time (Theorem \ref{thm:reconstructionisfast}).  The reconstruction procedure consists of sparse vector concatenation, permutation, matrix-vector multiplication, and at most one sparse triangular solve.

Moreover, \underline{\smash{compression is effective in reducing memory use}}:  numerical experiments show that $\Rirr$ holds orders of magnitude fewer nonzero entries than $\Ri$ in real-world applications, c.f.\ \S\ref{sec:experiments}.  Dropping not only $\Ri$, but $\RR, \Ci$, and $\CC$ as well, therefore offers substantial memory advantages.

\subsection{Data formats}

We will assume that the factored array,  $\DD$, is stored via a primitive data structure that permits $O(1)$ access to both rows and columns.  No further constraints are imposed.  This is consistent with current state of the art methods in persistent homology, where lazy methods can quickly build rows or columns of a boundary matrix.  Such data structures are increasingly available for a wide variety of common chain complex structures.

We will similarly assume that $\MM$ and $\Rirr$ admit $O(1)$ access to rows and columns; in applications where only rows (respectively, columns) are needed, one can drop the assumption of $O(1)$ access to columns (respectively, rows).

\begin{remark}
In a regime that calls for both rows and columns, one can, in the worst case, store two copies each of $\MM$ and $\Rirr$, one in CSR format and the other in CSC.  Such a strategy doubles memory use, but it is only needed in scenarios where the alternative would be to store two copies of each of the larger matrices $\Ri, \RR, \Ci, \CC$.
\end{remark}

\subsection{Lazy retrieval of rows and columns}
\label{sec:lazylookup}

There are 16 functionally distinct types of look-up request one might wish to perform on the matrices of a U-match decomposition, each  corresponding to one element of the following Cartesian product
    \begin{align}
        \{\text{pivot index, non-pivot index}\} \times  \{\text{row, column}\} \times \{\RR, \Ri, \CC, \Ci\}
        \label{eq:lazyparameters}
    \end{align}

 \begin{remark}
 \label{rmk:64cases}
  In fact the relevant number is closer to 64, since for purposes of lazy computation it is relevant to request a $\{$row, column$\}$ with entries sorted in $\{$ascending, descending$\}$ order according to the linear order on $\{$pivot rows, pivot columns$\}$.  Such a discussion is beyond the scope of this work, but merits systematic investigation.
 \end{remark}
    
The main result of this section is Theorem \ref{thm:trisolvebounds}. 
The proof of this result follows almost directly from Theorem \ref{thm:umatchblockidentities}, if we keep in mind the following:
\begin{enumerate}
    \item We can evaluate $T^{-1}b$ and $cT^{-1}$ by solving $Tx = b$ or $yT = c$, directly, for any invertible triangular matrix $T$.  Each one of these problems counts as one application of back-substitution.

    \item The rows and columns of matrix $\AA = \Rirr \DD_{\prows, \pcols}$ can be evaluated in a lazy fashion, since we assume efficient access to the rows and columns of $\Rirr$ and $\DD$.  Matrix $\AA$ is upper-triangular up to permutation of rows, by Lemma \ref{lem:atri} since $\pcols^* = \prows$ up to reordering.  Thus $\AA x = b$ and $y\AA =c$ can each be solved with a single application of  back-substitution.
    
    \item Matrix $\Drk$ factors as the product $\Ri_{\prows \prows} \AA$.  We have efficient access to the rows and columns of $\AA$ and $\Rirr$, so problems $\Drk b$ and $y \Drk  = c$ each count for one back-substitution.
\end{enumerate}

Table \ref{table:trisolves} catalogs the solve operation required for each lookup request.  The problems selected for back-substitution in this table can seem quite out of place at first, however they can be explained by further observation:

\begin{enumerate}\addtocounter{enumi}{3}
    \item Suppose that $T$ is an upper triangualr matrix and $S$ is an array of equal size.  Then to evaluate $\col_i(T^{-1} S)$ one should solve $T s =  \col_i(S)$.  However, to evaluate $S T^{-1}$ one should first solve $Tx = e_i$ to obtain the $i$th column of $T^{-1}$, then multiply this column by $S$ on the left.
\end{enumerate}
For example, retrieving either column $\pcol_i$ of $\CC$ or $\prow_i$ of $\RR$ will involve the matrix $\Ai$, if we follow Theorem \ref{thm:umatchblockidentities}.  However, executing the look-up for $\col_{\pcol_i}(\CC)$ requires a solution to $\AA x = \row_i(\MM_{\prows \pcols})$, while the executing a look-up for $\col_{\prows_i}(\RR)$ requires a solution to $\AA x = e_i$.  This disparity arises from the fact that for $\CC$ we multiply $\Ai$ with $\MM_{\prows \pcols}$ on the right, while for $\RR$ with multiply $\Ai$ with $\DD_{\upto{m} \pcols}$ on the left.

\begin{theorem}
\label{thm:trisolvebounds}
Let $\RR \MM = \DD \CC$ be a proper U-match decomposition.  If we have direct access to $\Rirr$,  $\MM$, and $\DD$, then
    \begin{enumerate}
        \item \emph{We may obtain the following with no triangular solves}: \\
        any row or column of $\Ci$, and any row of  $(\Ri)_{\prows \upto{m}}$
        \item \emph{We may obtain the following with at most one triangular solve}: \\
        any row or column of $\CC$ or $\RR$, and any column of $\Ri$
    \end{enumerate}
\end{theorem}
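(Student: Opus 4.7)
The plan is to derive the triangular-solve count for each of the sixteen look-up cases directly from the inner identities in Theorem \ref{thm:umatchblockidentities}, together with observations (1)--(4) stated just before the theorem. For each requested row or column of $\CC, \Ci, \RR,$ or $\Ri$, I would identify the relevant block(s) of the corresponding inner identity, and check whether the output can be assembled by concatenating standard unit vectors with the image of a single (sparse) matrix-vector multiplication involving $\Rirr, \MM, \DD$, or whether it requires one back-substitution against the matrix $\AA = \Rirr \DD_{\prows \pcols}$. By Lemma \ref{lem:atri}, $\AA$ is upper triangular up to a known permutation of rows, and by hypothesis its rows and columns are cheap to generate, so each of $\AA x = b$ and $y \AA = c$ indeed counts as one back-substitution.

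First I would dispatch the zero-solve cases. Equation \eqref{eq:inneridentity2} expresses $\Ci$ as an identity block $\id_{\npcols \upto{n}}$ stacked under the block $\MM_{\prows \pcols}^{-1} \Rirr \DD_{\prows \upto{n}}$. Since $\MM_{\prows \pcols}^{-1}$ is a rescaled permutation (the entries are reciprocals of the matched entries of $\MM$), reading any row or column of $\Ci$ reduces to reading a row or column of $\Rirr$ and of $\DD_{\prows \upto{n}}$, followed by a rescaled index permutation; no triangular solve is needed. Likewise, Equation \eqref{eq:inneridentity3} shows that $(\Ri)_{\prows \upto{m}}$ equals $\Rirr$ on the $\prows \prows$ block and $0$ on the $\prows \nprows$ block, so any row of $(\Ri)_{\prows \upto{m}}$ is, up to relabeling of column indices, a row of $\Rirr$.

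Next I would handle the one-solve cases. For $\col_{\pcol_i}(\CC)$, Equation \eqref{eq:inneridentity1} identifies the nontrivial block as $\Ai \cdot \col_i(\MM_{\prows \pcols})$, obtained by one solve $\AA x = \col_i(\MM_{\prows \pcols})$; for $\col_{\npcol_j}(\CC)$ the right-hand side is $-\Rirr \cdot \col_{\npcol_j}(\DD_{\prows \upto{n}})$, plus a standard unit vector from the $\id$ block. For $\row_{\pcol_i}(\CC)$, observation (4) applies: one solve $y \AA = e_i^\top$ produces $\row_i(\Ai)$, after which right-multiplication by the cheaply accessible factors $\MM_{\prows \pcols}$ and $\Rirr \DD_{\prows \npcols}$ yields the two blocks of the row. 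Equation \eqref{eq:inneridentity4} handles $\RR$ analogously: $\col_{\prow_i}(\RR) = \DD_{\upto{m} \pcols} \cdot (\Ai e_i)$ and $\row_p(\RR)$ needs $\row_p(\DD_{\upto{m} \pcols}) \cdot \Ai$, each costing one solve. Finally, for $\col_{\prow_i}(\Ri)$, the identity $\DD_{\prows \pcols} = \Rrr \AA$ (immediate from the definition of $\AA$) gives $\DD_{\prows \pcols}^{-1} = \Ai \Rirr$, so the $\nprows$ block of $\col_{\prow_i}(\Ri)$ becomes $-\DD_{\nprows \pcols} \Ai \col_i(\Rirr)$, again a single back-substitution followed by a cheap matrix-vector product; the $\prows$ block is just $\col_i(\Rirr)$.

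The main obstacle is bookkeeping rather than mathematical depth. The inner identities supply the algebraic skeleton almost for free; the delicate point is to verify that every occurrence of $\Ai$ in an inner identity can be collapsed into one back-substitution against $\AA$ rather than one per nonzero entry of the right-hand side. Observation (4) -- that $\col_i(T^{-1} S)$ is produced by a single solve $T s = \col_i(S)$ while $\row_i(S T^{-1})$ is produced by a single solve $T x = e_i$ followed by a matrix-vector product -- is precisely what ensures that each of the sixteen cases fits within the bounds claimed by the theorem.
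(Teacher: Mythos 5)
Your proposal is correct and mirrors the paper's own argument: the paper's proof simply cites Table \ref{table:trisolves}, which is generated from exactly the four observations you invoke (triangular solves against $T$, lazy access to $\AA$, the factorization $\Drk = \Rrr\AA$, and the left/right asymmetry of $\col_i(T^{-1}S)$ vs.\ $ST^{-1}$) combined with the inner identities of Theorem \ref{thm:umatchblockidentities}. The only cosmetic divergence is that for $\row_{\prow_i}(\RR)$ the paper's table uses the simpler solve $x\Rirr = e_i$ via $\RR_{\prows\prows} = \Rrr$, whereas you route through $y\AA = \row_i(\DD_{\prows\pcols})$ from Equation \eqref{eq:inneridentity4}; both land within the claimed one-solve bound, so the substance is the same.
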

\begin{proof}
The specific triangular solve operation required for each of the look-ups are reported in Table \ref{table:trisolves}.  See the preceding discussion for a full explanation of how this table was generated.
\end{proof}

\begin{theorem}
\label{thm:reconstructionisfast}
Given access to $\DD$, $\MM$, and $\Rirr$, one can reconstruct any row or column of $\RR, \Ri, \CC$, or $ \Ci$ in $O(m n)$ time, where $\DD \in \field^{m \times n}$.
\end{theorem}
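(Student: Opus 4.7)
The plan is to derive the bound as a direct corollary of Theorem \ref{thm:trisolvebounds} together with the inner identities of Theorem \ref{thm:umatchblockidentities}, by decomposing each reconstruction into a constant number of primitive operations and bounding each primitive by $O(mn)$.

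First I would record the relevant dimension bounds. Let $k$ denote the number of matched indices, so that $k \le \min(m,n)$ and in particular $k^2 \le mn$. The stored matrix $\Rirr$ is $k \times k$; the matrices $\DD$ and $\MM$ have shape $m \times n$; and every row or column one could be asked to return has length at most $\max(m,n)$. Thus a dense matrix-vector product against any submatrix of $\DD$, $\MM$, or $\Rirr$ costs $O(mn)$ in the worst case.

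Next I would appeal to Theorem \ref{thm:trisolvebounds} and the case-by-case catalog in Table \ref{table:trisolves} to identify, for each of the sixteen look-up types tabulated in display \eqref{eq:lazyparameters}, a closed-form reconstruction formula drawn from the inner identities. Each such formula reduces to three kinds of work: (i) reading and permuting entries of $\DD$, $\MM$, and $\Rirr$ along the index sets $\prows, \nprows, \pcols, \npcols$, which costs $O(m+n)$; (ii) at most a constant number of sparse matrix-vector products against submatrices of $\DD$, $\MM$, or $\Rirr$, each bounded by $O(mn)$; and (iii) at most one back-substitution against a $k \times k$ upper-triangular system, derived either from $\Rirr$ directly or from $\AA = \Rirr \DD_{\prows \pcols}$, which Lemma \ref{lem:atri} shows to be upper triangular up to a row permutation.

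The main subtlety is the cost of the triangular solve when the coefficient matrix is $\AA$, since $\AA$ is not stored. I would address this by computing each row or column of $\AA$ on demand during back-substitution as a matrix-vector product of $\Rirr$ against a column of $\DD_{\prows \pcols}$, both of which are directly accessible. The total work for the solve is then bounded by a constant number of matrix-vector products on matrices of dimension at most $m \times n$, hence $O(mn)$. Summing the contributions from (i), (ii), and (iii) across the constant number of primitive steps yields the claimed $O(mn)$ bound for every look-up request.
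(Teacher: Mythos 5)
Your proposal follows essentially the same route as the paper's proof: it derives the bound from Theorem \ref{thm:trisolvebounds} together with the inner identities of Theorem \ref{thm:umatchblockidentities}, breaking each look-up into permutations, a bounded number of matrix-vector products, and at most one triangular solve, and then bounding each primitive. The paper's own proof is in fact terser, so your additional bookkeeping is welcome. You also do something the paper does not: you explicitly flag the subtlety that $\AA = \Rirr \DD_{\prows\pcols}$ is never materialized, so the ``one triangular solve'' against $\AA$ hides nontrivial work.

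However, your resolution of that subtlety does not actually give the claimed bound. If one ``computes each row or column of $\AA$ on demand during back-substitution as a matrix-vector product of $\Rirr$ against a column of $\DD_{\prows\pcols}$,'' then a single back-substitution on a $k\times k$ system will need on the order of $k$ such rows or columns, not a constant number. Each one costs $O(k^2)$ to build (a length-$k$ row of $\Rirr$ times a $k\times k$ block of $\DD$), so the total is $O(k^3)$. Since $k$ can be as large as $\min(m,n)$, this is $O(\min(m,n)^3)$, which can exceed $O(mn)$ by a factor of up to $\min(m,n)$. The sentence ``the total work for the solve is then bounded by a constant number of matrix-vector products'' therefore does not follow from the procedure you describe, and is the load-bearing claim in your argument.

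To close this gap, reorganize the back-substitution so that each step costs $O(k)$ rather than $O(k^2)$. When solving $\AA x = b$, maintain an accumulator $v = \DD_{\prows\pcols}\, x_{\mathrm{partial}}$, where $x_{\mathrm{partial}}$ holds the already-determined coordinates of $x$. At each step the required inner product $\row_i(\AA)\cdot x_{\mathrm{partial}}$ equals $\row_i(\Rirr)\cdot v$, an $O(k)$ operation, and after the next unknown coordinate $x[j]$ is determined one updates $v \mathrel{+}= x[j]\,\col_j(\DD_{\prows\pcols})$, again $O(k)$. The diagonal entries of the (row-permuted) triangular matrix $\AA$ coincide with nonzero entries of $\MM$, because $\Ri\DD = \MM\Ci$ with $\Ci$ unitriangular, so they are retrievable in $O(1)$ from the stored $\MM$. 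The solve then costs $O(k^2) \le O(mn)$ as needed, which together with your bookkeeping for the remaining primitives recovers the stated bound.
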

\begin{proof}
 Theorems \ref{thm:umatchblockidentities} and \ref{thm:trisolvebounds} imply that any row or column of $\RR, \Ri, \CC$, or $ \Ci$ can be reconstructed via (1) some matrix-vector multiplications, (2) concatenation of at most one pair of vectors, and permuting their entries, and (3) at most one sparse triangular solve operation.  All matrices involved have size $m \times n$ or smaller.
\end{proof}

\begin{center}
\begin{table}
\begin{tabularx}{\textwidth}{lYY}
  \toprule
     $\row$ &$\prow_i \; / \; \pcol_i$&$\nprow_i \; / \; \npcol_i$\\
    \midrule
    $\Ri$&$(\dagger \dagger)$&$x \Drk = -\row_{i}( \DD_{\nprows  \pcols})$\\
    $\RR$&$x \Rirr = e_i$&$x \AA = \row_{i}( \DD_{\nprows \pcols})$\\
    $\Ci$&$(\dagger)$&$e_{\npcol_i}$\\
    $\CC$ & $x \AA = e_i$&$e_{\npcol_i}$\\
    \midrule
    
    $\col$&$\prow_i \; / \; \pcol_i$ & $\nprow_i \; / \; \npcol_i$\\
    \midrule
    $\Ri$&$\Drk x = e_i$&$e_{\nprow_i}$\\
    $\RR$&$\AA x = e_i$&$e_{\nprow_i}$\\    
    $\Ci$& $(\dagger)$
    &$(\dagger)$\\    
    $\CC$&$\AA x =  \col_{i} \MM_{\prows \pcols}$&$\AA x =  - \Rirr \col_{i}(\DD_{\prows \npcols})$\\
    \bottomrule
    \end{tabularx}
    \caption{
        Computations necessary to obtain rows and columns of matrices assuming access to only $\Rirr$, $\MM$, and $\DD$.  If a look-up requires the solution of a linear equation via back substitution, then this problem is expressed in form $T x = b$ or $yT = c$.  If no triangular solve is necessary because the vector in question can be computed by reindexing and sparse matrix vector multiplication, the corresponding entry is marked with a $(\dagger)$.  If the vector in question is the $i$th standard unit vector, then the corresponding entry is marked as $e_i$.  The entry for pivot rows of $\Ri$ is marked $(\dagger \dagger)$; in this special case no algebraic operations whatsoever are performed -- one only needs to permute the entries of the corresponding row of $\Rirr$, and insert some zeros.
    }    
    \label{table:trisolves}
\end{table}
\end{center}

\section{Lazy global duality}
\label{sec:gloabl_duality_with_umatch}
\label{sec:lazy_global_duality}

The notion of global dualtiy was introduced\footnote{See  \S\ref{subsec:computationalassymetry} for a review.} in \cite{de2011dualities}.  Roughly translated into the language of the current discussion, the critical observation from \cite{de2011dualities} states that each (0-graded) U-match decomposition of the total boundary operator $\DD$ contains ``all you need to know,'' to decompose a (relative) (co)homological persistence module.  

Theorem \ref{thm:jordanfromumatch} formalizes this fact.  The proof requires very little algebraic machinery,  thanks to the close connection between U-match and  right-reduction, i.e.\ $R = DV$ decomposition.

\begin{theorem}
\label{thm:jordanfromumatch}
If $\RR \MM = \DD \CC$ is a U-match decomposition,  then 
    \begin{align*}
        \underbrace{(\RR \MM) = \DD \CC}_{\rreduction}
         &&
        \underbrace{(  (\CC^{-1})^{\perp}\MM^\perp) = \DD^\perp (\RR^{-1})^{\perp}}_{\rreductionAlt}
    \end{align*}
are both right-reductions.  If, in addition,  $\RR$ and $\CC$ are both 0-graded, then  $\mbasisR{\rreduction}$ and $(\mbasisR{\rreductionAlt}^{\perp})^{-1}$ are (filtered, graded) Jordan bases of $\DD$.  

In particular, Theorem \ref{thm:globalduality} applies to $\mbasisR{\rreduction}$ and $(\mbasisR{\rreductionAlt}^{\perp})^{-1}$.  Consequently, a 0-graded U-match decomposition of $\DD$ provides all of the data necessary to decompose a (relative) (co)homological persistence module into indecomposable submodules.
\end{theorem}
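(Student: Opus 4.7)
The plan is to reduce the Jordan basis claim directly to Theorem~\ref{thm:globalRDV}, which already packages the passage from a pair of anti-transposed right-reductions to the desired pair of Jordan bases of $\DD$. The only substantive work left, then, is verifying that the two tuples written in the statement actually \emph{are} right-reductions; the 0-grading hypothesis on $\RR$ and $\CC$ will at that point transfer to $V = \CC$ for $\rreduction$ and to $\Valt = (\RR^{-1})^\perp$ for $\rreductionAlt$, which is precisely what Theorem~\ref{thm:globalRDV} needs to output filtered, graded Jordan bases.

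For the first right-reduction $\rreduction = (\RR\MM, \DD, \CC)$, the equation $R = \DD V$ with $R = \RR\MM$ and $V = \CC$ is literally the U-match identity, and $V$ is upper unitriangular by assumption; only reducedness of $R = \RR\MM$ demands argument. My approach here would be to apply the matching identity (Lemma~\ref{lem:matchingidentity}) column-wise: for each matched $c \in \pcols$, the column $\col_c(\RR\MM)$ equals a nonzero scalar multiple of $\col_{\mRow(c)}(\RR)$, and since $\RR$ is upper unitriangular, that column has its lowest nonzero entry in row $\mRow(c)$. Because $\mRow \colon \pcols \to \prows$ is a bijection, distinct nonzero columns of $R$ pivot in distinct rows, which is exactly reducedness.

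The tuple $\rreductionAlt$ I would handle by an anti-transpose manipulation of the U-match equation: rearrange $\RR\MM = \DD\CC$ to $\DD = \RR\MM\CC^{-1}$, anti-transpose to $\DD^\perp = (\CC^{-1})^\perp\MM^\perp\RR^\perp$, then right-multiply by $(\RR^\perp)^{-1} = (\RR^{-1})^\perp$ to obtain the stated identity $\DD^\perp(\RR^{-1})^\perp = (\CC^{-1})^\perp\MM^\perp$. Upper unitriangularity of $(\RR^{-1})^\perp$ follows from the fact that anti-transpose preserves upper unitriangularity (transpose sends upper triangular to lower triangular, and reversing row/column order reverses this again), and reducedness of $(\CC^{-1})^\perp\MM^\perp$ then follows from the very same matching-identity argument as above, with $(\CC^{-1})^\perp$ in the role of $\RR$ and $\MM^\perp$ in the role of $\MM$. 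Invoking Theorem~\ref{thm:globalRDV} on the two verified right-reductions finishes the proof. The main obstacle is minor and almost entirely bookkeeping: one must cleanly track how anti-transpose interacts with upper unitriangularity, with matching structure, and with the pivot-index bijection $\mRow$, so that the $\rreductionAlt$ argument can be seen as a symbol-for-symbol translation of the $\rreduction$ argument rather than a genuinely new calculation.
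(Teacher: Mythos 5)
Your proof is correct and takes essentially the same route as the paper: verify that $\rreduction$ and $\rreductionAlt$ are right-reductions, use anti-transpose symmetry for $\rreductionAlt$, and then invoke Theorem~\ref{thm:globalRDV}. The only difference is presentational: where the paper cites Lemma~\ref{lem:umatchRDVgeneral} for reducedness of $\RR\MM$, you unpack that claim directly via the matching identity (Lemma~\ref{lem:matchingidentity}) and injectivity of $\mRow$, which is a cleaner and more self-contained account of a fact the lemma's own proof leaves as an unproved observation.
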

\begin{proof}
That $\rreduction$ and $\rreductionAlt$ are right-reductions follows from Lemma \ref{lem:umatchRDVgeneral} and anti-transpose symmetry.  The conclusion therefore follows directly from Theorem \ref{thm:globalRDV}.
\end{proof}

Theorem \ref{thm:jordanfromumatch} is typical of results concerning U-match decomposition and persistence.  On the one hand, much of the theoretical heavy lifting has already been established in the literature.  On the other hand, the U-match formalism recasts these results in a manner that  clarifies concepts and facilitates theorem-proving.

Theorem \ref{thm:umatchsaecularbasis} below illustrates this point par excellence; the same result could be proved in any number of ways, but U-match decomposition allows a concise account that clarifies the underlying concepts.  Another important example is the family of applications that motivate the present work: lazy algorithms for global duality.  Work with such algorithms on the level of sparse vectors and indices can be painstaking and laborious.  However, theorems of a highly practical nature (e.g., to algorithm design) can be directly proved via the block structure described in Proposition \ref{thm:umatchblockidentities} and elsewhere.

\begin{remark}
\label{rmk:scale2umatch}
Theorem \ref{thm:jordanfromumatch} implies that  $\MM = \mbasisR{\rreduction}^{-1} \DD \mbasisR{\rreduction}$ is a generalized matching matrix.  Thus, we \underline{almost} have a U-match decomposition $\mbasisR{\rreduction} \MM = \DD \mbasisR{\rreduction}$.  It is not a true U-match decomposition in general, since $\mbasisR \rreduction$ may have entries other than 1 on the diagonal.  

This can be corrected by multiplying $\mbasisR{\rreduction}$ on the right with an invertible diagonal matrix $\diagmat$, yielding a bona fide U-match decomposition 
    \begin{align*}
        (\mbasisR{\rreduction} \diagmat)^{-1} \MM = \DD (\mbasisR{\rreduction} \diagmat).
    \end{align*}  
However, the columns of  $\mbasisR{\rreduction} \diagmat$ may no longer form a Jordan basis of $\DD$.  Indeed, one can find $\diagmat$ satisfying both the condition that $\mbasisR{\rreduction} \diagmat$ has 1's on the diagonal and the condition that $\mbasisR{\rreduction} \diagmat$ is a Jordan basis if and only if all diagonal elements of $\mbasisR{\rreduction}$ are equal.
\end{remark}

\subsection{Bases for cycles, boundaries, and the saecular lattice}

\newcommand{\upfrom}[1]{\overset{*}{\mathbf{#1}}}
\newcommand{\npcolsAT}{Y^{\perp c}} 
\newcommand{\pcolsAT}{Y^{\perp}} 

\newcommand{\pairset}{\Xi} 

\newcommand{\saecularlattice}{\mathscr S}

Let $\subspacelattice$ denote the order lattice of subspaces of  $\Chains$.  Let $\saecularlattice$ denote the \emph{saecular lattice} for $\Chains$, the sublattice of $\subspacelattice$ generated by all subspaces that can be expressed in one of the following three forms
    \begin{align*}
        \filta_\fparam \Chains_\simplexdim
        &&
        \DD_\bullet (\filta_\fparam \Chains_\simplexdim)
        &&
        \DD^\bullet (\filta_\fparam \Chains_\simplexdim)
     \end{align*}
for some $\simplexdim$ and $\fparam \ge 0$.  Note, in particular, that $\saecularlattice$ contains $\filta_\fparam\Cycles_\simplexdim$ and $\filta_\fparam\Boundaries_\simplexdim$, as well as $\partial \Ri\filta_\fparam\Chains_\simplexdim$.

\begin{theorem}
\label{thm:umatchsaecularbasis}
Let $\RR \MM = \DD \CC$ be a U-match decomposition.  Suppose $\RR$ and $\CC$ are 0-graded, and define a right-reduction $\rreduction$ as in Theorem \ref{thm:jordanfromumatch}.  Then the columns of $\mbasisR{\rreduction}$ contain a basis for each element of $\saecularlattice$.
\end{theorem}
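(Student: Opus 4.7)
The plan is to exploit that $\mbasisR{\rreduction}$, after a trivial diagonal rescaling, can simultaneously serve as both the domain and codomain COB of a single U-match decomposition of $\DD$. This symmetry lets one collection of vectors inherit bases for every element of both $\subspacelattice^n_\mathrm{dom}$ and $\subspacelattice^n_\mathrm{cod}$ at once, via Theorem \ref{thm:bifiltrationbasis}; combined with 0-gradedness, this will cover all generators of $\saecularlattice$.

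Concretely, I would first take $\diagmat$ to be the diagonal matrix with $\diagmat[k,k] = 1/\mbasisR{\rreduction}[k,k]$, so that $\tilde R := \mbasisR{\rreduction} \diagmat$ is upper unitriangular. Using the identity $\DD \mbasisR{\rreduction} = \mbasisR{\rreduction} \MM$ implied by Theorem \ref{thm:jordanfromumatch}, one verifies $\DD \tilde R = \tilde R \tilde M$ where $\tilde M := \diagmat^{-1} \MM \diagmat$ remains a generalized matching matrix. Thus $\tilde R \tilde M = \DD \tilde R$ is a bona fide U-match decomposition in which $\tilde R$ plays both roles (compare Remark \ref{rmk:scale2umatch}). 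Applying Theorem \ref{thm:bifiltrationbasis} to this decomposition, the columns of $\tilde R$ contain a basis for every element of $\subspacelattice^n_\mathrm{dom}$ (generated by $\filta_*$ and $\DD^\bullet \filta_*$) \emph{and} for every element of $\subspacelattice^n_\mathrm{cod}$ (generated by $\filta_*$ and $\DD_\bullet \filta_*$). Since $\tilde R$ and $\mbasisR{\rreduction}$ have the same columns up to nonzero scalar multiples, the same holds for $\mbasisR{\rreduction}$. Additionally, 0-gradedness of $\mbasisR{\rreduction}$ ensures that its columns of pure dimension $\simplexdim$ form a basis for $\Chains_\simplexdim$. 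Iterated application of Lemma \ref{lem:subspace_bases} then shows that $\mbasisR{\rreduction}$ contains a basis for every element of the sublattice $\mathcal{L}_0 \subseteq \subspacelattice$ generated by $\filta_\fparam$, $\DD^\bullet \filta_\fparam$, $\DD_\bullet \filta_\fparam$, and $\Chains_\simplexdim$ across all $\fparam, \simplexdim$.

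The main remaining step, and the primary technical obstacle, is to verify $\saecularlattice \subseteq \mathcal{L}_0$. The generators $\filta_\fparam \Chains_\simplexdim = \filta_\fparam \cap \Chains_\simplexdim$ and $\DD_\bullet(\filta_\fparam \Chains_\simplexdim) = \DD_\bullet \filta_\fparam \cap \Chains_{\simplexdim - 1}$ reduce to intersections already in $\mathcal{L}_0$ via the $(-1)$-gradedness of $\DD$. The subtle case is $\DD^\bullet(\filta_\fparam \Chains_\simplexdim)$: the preimage of a pure-graded subspace under a $(-1)$-graded map is \emph{not} itself pure-graded, because every cycle in any dimension $k \neq \simplexdim + 1$ lies in the preimage. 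A decomposition by grading, however, yields
\begin{align*}
    \DD^\bullet(\filta_\fparam \Chains_\simplexdim)
    \;=\;
    \sum_{k \neq \simplexdim + 1} \bigl(\Cycles \cap \Chains_k\bigr)
    \;+\;
    \bigl(\DD^\bullet \filta_\fparam \cap \Chains_{\simplexdim + 1}\bigr),
\end{align*}
where $\Cycles = \DD^\bullet \filta_0$, and each summand lies in $\mathcal{L}_0$, so their sum does as well, completing the argument.
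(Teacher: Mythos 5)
Your proof follows essentially the same route as the paper's: rescale $\mbasisR{\rreduction}$ by a diagonal matrix to obtain a genuine U-match decomposition with $\mbasisR{\rreduction}\diagmat$ serving as both domain and codomain \COB{} (Remark \ref{rmk:scale2umatch}), invoke Theorem \ref{thm:bifiltrationbasis} to get bases for the $\DD_\bullet$ and $\DD^\bullet$ sublattices, and close up under intersections and sums via Lemma \ref{lem:subspace_bases}. However, you supply a step the paper glosses over. The paper asserts that Theorem \ref{thm:bifiltrationbasis} directly furnishes bases for $\DD^\bullet(\filta_\fparam\Chains_\simplexdim)$ and $\DD_\bullet(\filta_\fparam\Chains_\simplexdim)$, but that theorem only concerns the lattices generated by $\filta_*$ together with $\DD^\bullet\filta_*$ or $\DD_\bullet\filta_*$; it says nothing about graded pieces $\Chains_\simplexdim$, which are not contiguous initial segments of the cell ordering. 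Your observation that 0-gradedness of $\mbasisR{\rreduction}$ independently puts a basis of each $\Chains_\simplexdim$ among its columns, and your explicit grading decomposition $\DD^\bullet(\filta_\fparam\Chains_\simplexdim) = \sum_{k\neq\simplexdim+1}(\Cycles\cap\Chains_k) + (\DD^\bullet\filta_\fparam\cap\Chains_{\simplexdim+1})$, are exactly what is needed to land each generator of $\saecularlattice$ inside the lattice spanned by subspaces for which Theorem \ref{thm:bifiltrationbasis} and 0-gradedness provide bases. This makes your argument strictly more complete than the one in the paper.
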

\begin{proof}
There exists a diagonal matrix $\diagmat$ such that   $(\mbasisR{\rreduction} \diagmat)^{-1} \MM = \DD (\mbasisR{\rreduction} \diagmat)$ is a bona-fide U-match decomposition, as discussed in Remark \ref{rmk:scale2umatch}.

The columns of $\mbasisR{\rreduction}$ therefore contain bases for every subspace of the form $\DD_\bullet (\filta_\fparam \Chains_\simplexdim)$ and $\DD^\bullet (\filta_\fparam \Chains_\simplexdim)$, by Theorem \ref{thm:bifiltrationbasis}, statements 1 and 2.  The columns of $\mbasisR{\rreduction}$ also contain a basis for each $\filta_\fparam \Chains$, since  $\mbasisR{\rreduction}$ is invertible and upper triangular.  Using Lemma \ref{lem:subspace_bases} we can then construct the desired bases from these.
\end{proof}

\begin{remark}
As discussed in Section 
\ref{sec:fundamentalsubspaces}, identifying the subset of columns of $\mbasisR{\rreduction}$ that freely generate a given subspace in $\saecularlattice$ requires very little effort, and can be deduced from the sparsity pattern of $\MM$.
\end{remark}

\begin{remark}
The U-match decomposition $\MM = (\mbasisR{\rreduction} \diagmat)^{-1} \DD (\mbasisR{\rreduction} \diagmat)$ need not be proper, in general.  
\end{remark}

\begin{remark}
Theorem \ref{thm:umatchsaecularbasis} has a global dual.  One replaces $\filta_\fparam \Chains_\simplexdim$ (a space which one can regard as the image of the inclusion $\filta_\fparam \Chains_\simplexdim \su \filta_\fparammax \Chains_\simplexdim$) with the image of the inclusion $ (\Chains/\filta_\fparam \Chains)^\simplexdim \su  (\Chains/\filta_\fparammin \Chains)^\simplexdim = \Chains^\simplexdim$.  This generates a globally dual saecular lattice $\saecularlattice^*$, whose elements are  freely generated by the rows of matrix $\mbasisR{\rreductionAlt}^\perp$, where $\rreductionAlt$ is defined as in Theorem \ref{thm:jordanfromumatch}. 
\end{remark}

\subsection{Lazy access to Jordan bases}

\newcommand{\RRalt}{{\mathfrak L}}

 U-match factorization provides the following two-step recipe for computing a filtered Jordan basis of the filtered differential operator $\DD$: 
 
 \begin{enumerate}
     \item  Apply Algorithm \ref{alg:revised_lrdec} to obtain an invertible submatrix $\Rirr$ for a proper U-match decomposition $\RR \MM = \DD \CC$.  
     \item Apply the look-up and retrieval methods described in \S\ref{sec:lazyumatch} to access the columns of $\CC$ in a lazy fashion; these columns can then be translated into a Jordan basis, via Theorem \ref{thm:jordanfromumatch}. 
 \end{enumerate}

This is by no means the only lazy approach to constructing Jordan bases; several variants on this method, including optimizations designed to accelerate computation and sparsify output, can be found in Appendix \ref{sec:lazy_jordan_alt}.

\subsection{Linear and inverse problems}
\label{sec:exampleproblems}

A surprising diversity of direct and inverse problems arise in the natural course of applications with persistent (relative) (co)homology.  One of the attractive features of U-match decomposition is a unified framework to address these.  

\subsubsection*{Homology}

\newcommand{\bb}{b}
\newcommand{\chainc}{f}

Let $\chaina \in \Chains_\simplexdim$ be a chain of dimension $\simplexdim$.  The \emph{birth time} of $\chaina$ is the moment at which $\chaina$ enters the filtration, i.e.\ $\min \{\fparamb : \chaina \in \filta_\fparamb \Chains_\simplexdim\} $.  The \emph{bounding time} of $\chaina$ is the moment at which $\chaina$ becomes nullhomologous, or $\infty$, if $\chaina$ never becomes nullhomologous. Concretely, this translates to $\min \{ \fparamb : \chaina \in  \filta_\fparamb\Boundaries_\simplexdim\}$, if we take $\min \emptyset = \infty$.  The \emph{lifespan} of $\chaina$ is the half-open interval $[a,b)$, where $a$ and $b$ are the birth and bounding time of $\chaina$, respectively.

One can compute the bounding time of $x$ by solving the \emph{earliest bounding chain problem}.  This is the inverse problem of finding a specific element $\chainb \in \filta_\fparam \Chains_{\simplexdim+1}$ such that $\chaina = \DD \chainb$, where $\fparam$ is the bounding time of $\chaina$.  More generally, one could solve for the affine space of \underline{all} solutions to $\chaina = \DD \chainb$ in $\filta_\fparamb \Chains_{\simplexdim+1}$, for each $\fparamb \ge \fparam$.  Finally, given a different $\simplexdim$-chain, $\chainc$, one can ask where, in the filtration, $\chaina$ and $\chainc$ become homologous.  We call this the \emph{time of homology} problem.

All these problems become easy to solve when one is given access to a (0-graded) U-match decomposition of $\DD$.  First, we may apply Proposition \ref{prop:solvingsystems} either (i) to decide that no bounding chain exists, in which case the space of solutions is empty and the bounding time of $x$ is $\infty$, or (ii) to obtain a solution $\chainb$ to $\DD \chainb = \chaina$.  Corollary \ref{cor:minmaxsolutions} then ensures that $\chainb$ is an earliest bounding chain.  In this case, the space of bounding chains at time $\fparamb \ge \fparam$ can be expressed in form $\chainb + \filta_\fparamb\Cycles_{\simplexdim+1}$, and an explicit basis for $\filta_\fparamb\Cycles_{\simplexdim+1}$ can be computed as in \S\ref{sec:fundamentalsubspaces}. Technically this result indicates that the columns of $\CC$ contain a basis for the space of \underline{all} cycles at time $\fparamb$; however, since we assume that all matrices are 0-graded one can show that the subset of columns representing chains of dimension $\simplexdim+1$ is a basis for $\filta_\fparamb\Cycles_{\simplexdim+1}$.

The birth time of $\chaina$ requires no special machinery to compute; if $\chaina$ is a linear combination of basis vectors $\sum_{j \in J} \alpha_j \svec_j$ with each $\alpha_j \neq 0$, then $\chaina$ first appears at time $t = \max J$.  The lifespan of $\chaina$ is obtained for free once one calculates the birth and bounding times.

Finally, one can solve the time of homology problem by applying the methods already discussed to the difference $\chaina - \chainc$, since $[\chaina] = [\chainc]$ in $\Homologies_\simplexdim(\filta_\fparamb \Chains)$ precisely when $\chaina, \chaina - \chainc \in \filta_\fparamb \Chains$ and $\chaina - \chainc  \in \filta_\fparam\Boundaries_\simplexdim$.

\subsubsection*{Relative cohomology}

The discussion of inverse problems for homology has a global dual for persistent relative cohomology.  There is a natural sense in which this dual is ``just'' anti-transpose symmetry. However, there are several interesting differences in semantic interpretation.  For example, it is easy to determine the lefthand endpoint of the time interval where $\chaina$ represents a nonzero homology class and harder to determine the righthand endpoint; indeed, sufficiently hard to oblige us to use U-match machinery to recover it. For persistent relative cohomology, the situation is reversed; that is, the righthand endpoint is easy to determine and the lefthand requires work. This is, of course, a reflection of the fact that the anti-transpose operation reverses order of rows and columns. In the interest of space, we omit further details.  

\subsubsection*{Change of basis}

Proposition \ref{prop:kerneldeletion} makes it straightforward to perform change of basis operations on cycles and cocyles.  In particular, if we wish to re-express a column vector $c$ as a linear combination in the columns of domain \COB{} $\RR$, then we may compute $\Ri c$ by setting some entries of $c$ to zero.

\section{Factorization algorithms}
\label{sec:factorization_algorithms}

Here we present two variants of a familiar algorithm, Gauss-Jordan elimination. One variant returns a full U-match decomposition of form $\Ri \DD = \MM \Ci$.  The other returns only $\Rirr$, which, as we saw in \S\ref{sec:lazyumatch}, suffices to reconstruct the other parts of the decomposition, if and when they are needed.   

Readers versed in computational persistence may observe that Algorithm \ref{alg_lrdec} is essentially the \emph{cohomology algorithm}, a standard method to compute persistent relative cohomology \cite{de2011dualities}.  Up to relabeling, this method amounts to an application of the ``standard algorithm'' \cite{cohen2006vines} to the antitranspose of $\DD$.  
The procedure is ``left-looking,'' in the sense that no information about row $i$ of $\DD$ is needed until we construct row $i$ of $\Ri$.

We write $\id^{n\times n}$ for the $n \times n$ identity matrix and $0^{m \times n}$ for the $m \times n$ zero matrix.

\medskip

\begin{algorithm}[H]
\SetAlgoLined
\KwIn{Matrix $D\in \field^{m\times n}$.}
\KwResult{Upper unitriangular matrices $\Ri\in\field^{m\times m},\Ci\in\field^{n\times n}$ and matching matrix $M\in\field^{m\times n}$ such that $\Ri D=M\Ci$.  The corresponding U-match decomposition is $\RR M = \DD \CC$.}
Initialization: $\Ri\gets I^{m\times m}$, $\Ci \gets I^{n\times n}$, $M\gets 0^{m\times n}$ \;
\For{$i\gets m$ \textbf{to} $1$}{
\While{ there exist  $j \in \{i+1, \ldots, m\}$ and $k\in \{1, \ldots, n\}$ such that  $\row_i(\DD)$ and $\row_j(\DD)$ both have leading nonzero entries in column $k$, } 
{
$\row_i(\DD) \gets \row_i(\DD) - \frac{D[i,k]}{D[j,k]} \row_j(\DD)$\;
$\row_i(\Ri) \gets \row_i(\Ri)-\frac{D[i,k]}{D[j,k]} \row_j(\Ri)$\;
}
}
\For{$i\gets 1$ \textbf{to} $m$}{
\If{for some $k$, $D[i,k]$ is the leading entry of $\row_i(\DD)$}{
$\row_k(\Ci)\gets \frac{1}{D[i,k]}\row_i(\DD)$\;
$M[i,k] \gets D[i,k]$\;
}
}
\caption{Proper U-match decomposition (uncompressed)}
\label{alg_lrdec}
\end{algorithm}

\begin{proposition}
\label{prop:correctnessofalgorithm1}
Algorithm \ref{alg_lrdec} returns a U-match decomposition.
\end{proposition}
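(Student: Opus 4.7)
The plan is to verify the four defining features of a U-match decomposition for the output: $\Ri$ and $\Ci$ upper unitriangular, $M$ a matching matrix, and $\Ri\DD = M\Ci$ (which rearranges to $\RR M = \DD\CC$). The central invariant to maintain throughout the first for-loop is $\Ri\DD^{(\mathrm{in})} = \DD$, where $\DD^{(\mathrm{in})}$ is the input matrix and $\DD$ denotes the current state of the mutable array. This holds at initialization and is preserved by each row operation, since subtracting a scalar multiple of $\row_j(\DD)$ from $\row_i(\DD)$ is left-multiplication of $\DD^{(\mathrm{in})}$ by an elementary matrix, which we simultaneously apply to $\Ri$. In parallel, an induction on the outer loop (which processes $i$ from $m$ down to $1$) shows $\Ri$ stays upper unitriangular: at iteration $i$ the updates to $\row_i(\Ri)$ only add multiples of $\row_j(\Ri)$ with $j > i$, each supported in $\{j, j+1, \ldots, m\}$ by the inductive hypothesis, so the new $\row_i(\Ri)$ is supported in $\{i, i+1, \ldots, m\}$ with its diagonal entry untouched.

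Next I would establish the key ``row-reduced'' property of the post-loop array $\DD'$: distinct nonzero rows have distinct leading columns. Each inner while loop strictly increases the column index of the leading entry of $\row_i(\DD)$ (since $\row_j(\DD)$ for $j > i$ vanishes strictly to the left of its own leading entry), so the loop terminates; upon termination $\row_i(\DD)$ either is zero or has its leading column disjoint from the leading columns of $\{\row_j(\DD) : j > i\}$. Because rows $j > i$ are never modified during iteration $i$, this invariant propagates to the end of the outer loop.

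With these facts in hand, the second for-loop finishes the argument. For each row $i$ with leading nonzero $\DD'[i,k]$, we set $M[i,k] = \DD'[i,k]$ and $\row_k(\Ci) = \DD'[i,k]^{-1}\row_i(\DD')$. The row-reduced property makes $M$ a matching matrix (at most one nonzero per row by construction, and at most one per column since distinct nonzero rows have distinct leading columns). The matrix $\Ci$ is upper unitriangular because any modified $\row_k(\Ci)$ has leading entry $1$ in column $k$ with zeros strictly to the left, and unmodified rows are the standard basis rows $e_k$. Finally, $M\Ci = \DD'$ is checked row-by-row: for nonzero $\row_i(\DD')$ one has $\row_i(M\Ci) = M[i,k]\row_k(\Ci) = \row_i(\DD')$, and for zero rows both sides vanish. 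Combined with the invariant $\Ri\DD^{(\mathrm{in})} = \DD'$, this yields $\Ri\DD^{(\mathrm{in})} = M\Ci$, whence $\RR M = \DD^{(\mathrm{in})}\CC$ after inversion.

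The main obstacle is purely notational: keeping careful track of the distinction between the original input and its evolving state across iterations, and verifying that the row-reduced invariant is correctly propagated across the outer loop. No step requires machinery beyond elementary linear algebra.
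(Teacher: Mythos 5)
Your proposal is correct and follows essentially the same route as the paper's proof: maintain the invariant that $\Ri$ applied to the input reproduces the current working array, show the first loop leaves a row-reduced array while keeping $\Ri$ upper unitriangular, observe that the second loop assembles an upper unitriangular $\Ci$ and a matching matrix $M$, and verify $M\Ci = \Ri \DD$ row by row. You supply a few details the paper leaves implicit (termination of the inner while loop via the strictly increasing leading-column index, and the induction showing $\Ri$ stays supported on $\{i,\ldots,m\}$ in row $i$), but the argument is the same one.
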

\begin{proof}
For any $\row_i(\DD)$, the first nested loop in Algorithm~\ref{alg_lrdec} adds scaled rows $\row_j(\DD)$ below $\row_i(\DD)$ to clear the first non-zero entry of $\row_i(\DD)$; while $\Ri$ records the row operations during the clearing process. Hence, $\Ri$ is an upper unitriangular matrix and $\Ri \DD$ becomes a reduced matrix such that for any two non-zero rows of $\Ri \DD$, the first non-zero entries have different column indices. 

The second loop scales each non-zero row of $\Ri \DD$ and reassembles them to form an upper unitriangular matrix $\Ci$. 

To verify $\Ri \DD=M\Ci$ or $LM=DR$, observe that if the $i$-th row of $\Ri \DD$ is $0$, then the $i$-th row of $M$ (and also $M\Ci$) is zero; if the $i$-th row of $M$ is not $0$, assume that $M[i,k]\neq 0$, then $\row_i(M\Ci)=\row_i(M)\Ci=M[i,k]\row_k(\Ci)=D[i,k]\row_k(\Ci)=\row_i(\Ri \DD)$.
\end{proof}

Since our work is motivated by applications where $\Ri$ is too large to store in memory, one would naturally like an analog of Algorithm \ref{alg_lrdec} which does not construct  $\Ri$ completely, but rather just the submatrix $\Rirr$.  Algorithm \ref{alg:revised_lrdec} is one such method.  

Significantly,  Algorithm \ref{alg:revised_lrdec} does not record any of the modified rows of $\DD$ in memory; rather it recomputes each modified pivot row as the product of a corresponding row of $\overline{\RR}$ with $\DD$, whenever needed.  This is essentially the strategy pioneered by \cite{bauer2019ripser}; as discussed in that work, it can cut memory use dramatically.

\medskip

\begin{algorithm}[H]
\SetAlgoLined
\KwIn{Matrix $\DD \in \field^{m\times n}$.}
\KwResult{Matching matrix $\MM\in\field^{m\times n}$ and upper triangular matrix $\overline{\RR}$ such that $\overline{\RR} = \Rirr$, where $\Ri$ is the row operation matrix returned by Algorithm \ref{alg_lrdec}.}
Initialization: $\overline{\RR}\gets \emptyset$, $\MM\gets 0_{m\times n}$, $\mathrm{indices} = \emptyset$\;
\For{$i\gets m$ \textbf{to} $1$}{
$\mathrm{vec} =[0,0,\cdots,0]\in \field_{1\times (m-i)}$\;
$\mathrm{row} = \row_i(\DD)$ \;
$\mathrm{end} = \mathrm{indices}.\mathrm{length}()$ \;

\While{ exist $j=\mathrm{indices}[l]$ and $k$ with $M[j,k]\neq 0$ and $k$ is the first nozero entry of $\mathrm{row}$,} 
{
$\lambda = \mathrm{row}[k]/\MM[j,k]$\;
$\mathrm{reduced} = \row_l(\overline{\RR})\cdot D_{\mathrm{indices}, \upto{n}}$ \;
$\mathrm{row} \gets \mathrm{row} - \lambda\cdot \mathrm{reduced}$\;
$\mathrm{vec} \gets \mathrm{vec} -\lambda\cdot \row_j(\Ri)$\;
}

\If{$\mathrm{row}\neq 0$}{
let $k$ be the first non-zero entry of $\mathrm{row}$ \;
$\overline{\RR}\gets
\begin{bmatrix}
1 & \mathrm{vec} \\
0 & \overline{\RR}
\end{bmatrix}$\;
$M[i,k]=D[i,k]$\;
$\mathrm{indices}.\mathrm{push}(i)$\;
$\mathrm{indices}.\mathrm{sort}()$\;
}
}
\caption{Proper U-match decomposition, (compressed)}
\label{alg:revised_lrdec}
\end{algorithm}

\begin{proposition}
Let $\Ri$ and $\overline{\RR}$ be the matrices returned by Algorithms \ref{alg_lrdec} and  \ref{alg:revised_lrdec}, respectively, and let $ \rho=(\prow_1, \cdots, \prow_k)$ denote the indices of the pivot rows of the elimination procedure in Algorithm \ref{alg_lrdec}.  Then $\overline{\RR} =  (\Ri)_{\rho\rho}$.
\end{proposition}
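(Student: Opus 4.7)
The plan is to establish $\overline{\RR} = (\Ri)_{\rho\rho}$ by maintaining a tight inductive invariant that links the state of Algorithm~\ref{alg:revised_lrdec} to the state of Algorithm~\ref{alg_lrdec}, and to precede it by a small structural lemma about the sparsity of $\Ri$.

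\textbf{Step 1: A support lemma for Algorithm~\ref{alg_lrdec}.} First I would show that for every pivot index $j \in \rho$, the support of $\row_j(\Ri)$ is contained in $\rho$. The argument is a downward induction on $j$ ranging over $\rho$. The base case (the largest pivot index) is immediate, since that row is never touched by Algorithm~\ref{alg_lrdec} and $\row_j(\Ri)$ is the standard unit vector $e_j$. For the inductive step, observe that the invariant $\Ri\DD_0 = \DD_{\mathrm{current}}$ is maintained throughout Algorithm~\ref{alg_lrdec}, and that by Proposition~\ref{prop:correctnessofalgorithm1} non-pivot rows of the final $\Ri\DD_0$ vanish. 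Therefore the only rows $j' > j$ whose current value $\row_{j'}(\DD)$ is nonzero (so that they can be used to reduce $\row_j(\DD)$) are pivot rows, so $\row_j(\Ri)$ accumulates only scalar multiples of $\row_{j'}(\Ri)$ for $j' \in \rho$ with $j' > j$; by the inductive hypothesis each such summand is supported in $\rho$.

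\textbf{Step 2: The invariant for Algorithm~\ref{alg:revised_lrdec}.} I would then prove by downward induction on $i$ that at the end of iteration $i$ of the outer loop in Algorithm~\ref{alg:revised_lrdec}, the list \texttt{indices} equals the pivot indices in $\{i, i+1, \ldots, m\}$ listed in ascending order, and $\overline{\RR}$ has size $|\texttt{indices}|\times|\texttt{indices}|$ with
\[
  \overline{\RR}[p,q] \;=\; \Ri[\,\texttt{indices}[p],\, \texttt{indices}[q]\,].
\]
The base case is trivial, since initially $\overline{\RR}$ is empty. Specializing the invariant to $i = 1$ yields the statement of the proposition.

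\textbf{Step 3: Faithful replay of the Algorithm~\ref{alg_lrdec} reductions.} The heart of the inductive step is to show that the quantity \texttt{reduced} computed at each inner-loop iteration of Algorithm~\ref{alg:revised_lrdec} equals $\row_j(\DD_{\mathrm{current}})$ as it would appear in Algorithm~\ref{alg_lrdec}, even though Algorithm~\ref{alg:revised_lrdec} never mutates $\DD$. Using the inductive hypothesis on $\overline{\RR}$ together with the support lemma from Step~1, for $j = \texttt{indices}[l]$ I would write
\[
  \row_l(\overline{\RR})\cdot \DD_{\texttt{indices},\,\upto{n}}
  \;=\; \sum_{m\in\texttt{indices}} \Ri[j,m]\,\row_m(\DD_0)
  \;=\; \sum_{m} \Ri[j,m]\,\row_m(\DD_0)
  \;=\; \row_j(\Ri\DD_0)
  \;=\; \row_j(\DD_{\mathrm{current}}),
\]
where the second equality uses that $\row_j(\Ri)$ is supported in $\rho \cap \{m : m \ge j\}$, which is precisely \texttt{indices} at this stage. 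Consequently \texttt{row} evolves through the same sequence of scalar updates as $\row_i(\DD)$ in Algorithm~\ref{alg_lrdec}, and \texttt{vec} tracks the pivot-restricted portion of $\row_i(\Ri)$, provided one interprets the update ``$\texttt{vec} \gets \texttt{vec} - \lambda\cdot \row_j(\Ri)$'' as subtracting the corresponding row of $\overline{\RR}$ (which equals the pivot-restricted $\row_j(\Ri)$ by the invariant). It then remains to check that at the end of the iteration the block assembly $\begin{bmatrix} 1 & \texttt{vec} \\ 0 & \overline{\RR} \end{bmatrix}$ prepends a row to $\overline{\RR}$ whose diagonal entry is $\Ri[i,i] = 1$ and whose off-diagonal entries match $\Ri[i, \texttt{indices}]$, after \texttt{indices.sort()} places $i$ at position $0$.

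\textbf{Main obstacle.} The principal care required is the bookkeeping of indices, not the algebra: the vector \texttt{vec} is indexed implicitly by the current pivot indices strictly greater than $i$, and the notation $\row_j(\Ri)$ inside Algorithm~\ref{alg:revised_lrdec} must be reinterpreted via the invariant as the pivot-restricted row that lives inside $\overline{\RR}$. Once Step~1 pins down that $\Ri$'s nonzero entries along pivot rows occur only at pivot columns, this reinterpretation is canonical, and the two algorithms can be placed in lock-step to close the induction.
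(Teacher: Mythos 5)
Your proof is correct and pursues the same underlying idea as the paper (Algorithm~\ref{alg:revised_lrdec} faithfully replays Algorithm~\ref{alg_lrdec} while recording only the rows and columns of $\Ri$ indexed by pivots), but the paper dispatches the claim in a single sentence, whereas you supply a genuine inductive argument. The substantive addition is your Step~1 support lemma, that $\supp(\row_j(\Ri)) \subseteq \rho$ for every pivot row $j$; this is not stated anywhere in the paper, yet it is exactly what is needed to justify the middle equality $\sum_{m\in\texttt{indices}} \Ri[j,m]\,\row_m(\DD_0) = \row_j(\Ri\DD_0)$, i.e.\ that restricting the multiplication to pivot rows of $\DD$ loses nothing. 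Without that lemma, the ``\texttt{reduced}'' vector constructed by Algorithm~\ref{alg:revised_lrdec} need not equal the current state of $\row_j(\DD)$ in Algorithm~\ref{alg_lrdec}, so the claim of lock-step execution would not be justified. You also correctly flag that the line ``$\texttt{vec} \gets \texttt{vec} - \lambda\cdot\row_j(\Ri)$'' in the pseudocode is an abuse of notation (Algorithm~\ref{alg:revised_lrdec} never holds $\Ri$) and read it as the corresponding row of $\overline{\RR}$, which the invariant makes canonical. One small imprecision: you assert that $\rho\cap\{m : m\ge j\}$ ``is precisely \texttt{indices} at this stage,'' but it is only a \emph{subset} of \texttt{indices} (which at outer iteration $i$ contains all pivot indices exceeding $i$, not merely those $\ge j$); the containment is all that the argument requires, so correctness is unaffected.
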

\begin{proof}
The statement follows from the fact that Algorithm~\ref{alg:revised_lrdec} is just an adopted version of Algorithm~\ref{alg_lrdec} in which we only record the rows and columns of $\Ri$ whose indices corresponding the non-zero rows of $\Ri \DD$.
\end{proof}

\subsection{Short circuit optimizations specific to persistence}
\label{sec:shortcircuitph}

Several short-circuit optimizations from the current literature in PH computation can also be applied to optimize our {factorization} algorithms. In particular, \emph{clearing optimization} allows one to skip some iterations of the for loop, and \emph{Morse pairings} allow one to short circuit many iterations that can't be skipped by clearing.  Here we discuss these ideas in detail.

\subsubsection*{Clear and compress (computation with a twist)}

\emph{Clearing} and \emph{compression} refer to a family of acceleration techniques core to modern persistent (co)homology computation.  The mathematical basis for these techniques  can be traced to the following general property of matching relations on  2-nilpotent matrices.

\begin{proposition}
\label{prop:clearandcompress}
If $\IMatch$ is the matching relation of a square matrix $\DD$, and $\DD^2 = 0$, then $\DEF(\IMatch) \cap \VAL(\IMatch) = \emptyset$.
\end{proposition}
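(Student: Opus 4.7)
My plan is to take any U-match decomposition $\RR \MM = \DD \CC$ of $\DD$, rewrite $\DD^2 = 0$ in terms of the factorization, and extract a contradiction by forcing a diagonal entry of an upper unitriangular matrix to vanish. Existence of such a decomposition is guaranteed by the constructive results in \S\ref{sec:factorization_algorithms}, and by Theorem \ref{thm:matchingunique} the matching $\IMatch$ depends only on $\DD$, so any choice suffices; notably, I will not need the decomposition to be proper.

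Setting $N := \Ci \RR$, I would first observe that $N$ is upper unitriangular as a product of two upper unitriangular matrices, so in particular $N[i,i] = 1$ for every $i$. Substituting $\DD = \RR \MM \Ci$ into the nilpotency condition and cancelling the invertible factors $\RR$ on the left and $\Ci$ on the right,
\begin{align*}
0 = \DD^2 = \RR \MM (\Ci \RR) \MM \Ci = \RR (\MM N \MM) \Ci,
\end{align*}
gives $\MM N \MM = 0$.

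Next I would expand $(\MM N \MM)[a,b]$ entrywise. Because $\MM$ has at most one nonzero entry in each row and in each column, the double sum collapses to
\begin{align*}
(\MM N \MM)[a,b] = \MM[a, \mCol(a)] \, N[\mCol(a), \mRow(b)] \, \MM[\mRow(b), b]
\end{align*}
whenever $a \in \prows$ and $b \in \pcols$, and vanishes otherwise. The outer scalar factors are nonzero by definition of $\IMatch$, so the vanishing of $\MM N \MM$ forces $N[\mCol(a), \mRow(b)] = 0$ for all such $(a,b)$. Since $\mCol$ bijects $\prows$ onto $\pcols$ and $\mRow$ bijects $\pcols$ onto $\prows$, this is exactly the statement that the block $N_{\pcols \, \prows}$ is zero.

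The contradiction is then immediate: if some $i$ belonged to $\prows \cap \pcols$, then the entry $N[i,i]$ would lie in the zero block $N_{\pcols \, \prows}$, forcing $N[i,i] = 0$, yet unitriangularity of $N$ gives $N[i,i] = 1$. Hence $\DEF(\IMatch) \cap \VAL(\IMatch) = \prows \cap \pcols$ is empty. The main obstacle I anticipate is purely notational: one must carefully track the bijections $\mCol$ and $\mRow$ in the expansion of $\MM N \MM$ to recognize that the derived vanishing pins down exactly the block of $N$ whose diagonal positions are indexed by $\prows \cap \pcols$.
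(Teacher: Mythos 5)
Your proof is correct, and it takes a genuinely different route from the paper's. The paper's proof invokes the Jordan-basis machinery (Theorem \ref{thm:jordanfromumatch} together with Remark \ref{rmk:scale2umatch}) to manufacture a \emph{special} U-match decomposition in which the domain and codomain \COB{}s coincide, i.e.\ $\RR = \CC$, so that $\MM = \RR^{-1}\DD\RR$ is literally conjugate to $\DD$; then $\MM^2 = 0$ follows immediately, and a nilpotent matching matrix is visibly forced to have disjoint row and column support. Your argument, by contrast, works with an \emph{arbitrary} U-match decomposition $\RR\MM = \DD\CC$: in general $\Ci\RR \neq \id$, so you introduce $N = \Ci\RR$, deduce $\MM N \MM = 0$, and then exploit the matching structure to pin down the block $N_{\pcols\prows} = 0$, which collides with unitriangularity of $N$ on any diagonal position in $\prows \cap \pcols$. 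In effect your proof is a strict generalization of the paper's --- their argument is your argument specialized to the case $N = \id$. What the paper's route buys is brevity once the Jordan-basis toolbox is already on hand; what yours buys is independence from that toolbox (you use only the existence of a U-match decomposition and Theorem \ref{thm:matchingunique}, and as you note you do not even need properness), at the cost of a slightly more delicate entry-wise bookkeeping step with the bijections $\mCol$ and $\mRow$.
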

\begin{proof}
 Fix a U-match decomposition $\RR \MM = \DD \CC$, 
and define a right-reduction $\rreduction$  as in Theorem \ref{thm:jordanfromumatch}.  Then there exists another U-match decomposition, $(\mbasisR{\rreduction} \diagmat) \MM = \DD (\mbasisR{\rreduction} \diagmat)$, as discussed in Remark \ref{rmk:scale2umatch} (recall that the matching array, $\MM$, is the same in all U-match decompositions, c.f.\ Theorem \ref{thm:matchingunique}).  We have $\MM =  (\mbasisR{\rreduction} \diagmat)^{-1} \DD (\mbasisR{\rreduction} \diagmat)$, hence $\MM^2 = 0$.  Since $\MM$ is a matching array, this implies $\DEF(\IMatch) \cap \VAL(\IMatch) = \emptyset$.
\end{proof}

Proposition \ref{prop:clearandcompress} states, in particular, that the set of pivot rows in Algorithm \ref{alg:revised_lrdec} and the set of rows indexed by $\VAL(\IMatch)$ are disjoint.  In practice, this often means that one can ``ignore'' rows indexed by $\VAL(\IMatch)$ during matrix reduction.  

The idea behind this result, and its implications for PH computation, can be traced at least as far back as one of the earliest works in this field, \cite{ZCComputing05}.  Here the authors use a variant on Proposition \ref{prop:kerneldeletion} to show that certain change of basis operations on a boundary matrix $\partial_\simplexdim$ result in the zeroing out of pivot rows, leaving all other rows unchanged; they use this zeroing procedure as a preprocessing step in right reduction.  

Proposition \ref{prop:clearandcompress} also implies that the set of pivot columns in Algorithm \ref{alg:revised_lrdec} and the set of columns indexed by $\DEF(\IMatch)$ are disjoint.  This gives rise to a natural dual approach, in which one skips over, clears, or deletes columns indexed by $\DEF(\IMatch)$.  This idea was developed into a formal acceleration technique in \cite{CKPersistent11}; in experiments, the technique improved time and memory use substantially.

These ideas were further developed in \cite{BKRClear14}, which computes PH in ``chunks''; after a chunk is computed, information about $\DEF(\IMatch)$ and $\VAL(\IMatch)$ are extracted and used to simplify the boundary matrix.  

Algorithms that compute rows of $\DD$ in a lazy fashion need not delete or simplify rows of $\DD$ at all; they can simply skip over them.  This saves not only algebraic operations (which would otherwise have been needed to reduce a row to zero), but also the computations needed to construct that row in the first place.  The pairing of lazy methods with this clearing optimization was pioneered in \cite{bauer2019ripser}.  In numerical experiments, the time and memory saved by excluding rows indexed by $\VAL(\IMatch)$ is disproportionate to the number of rows excluded \cite{CKPersistent11, BKRClear14, bauer2019ripser, zhang2019hypha}, at least for \underline{\smash{clique complexes}}.  Effects for cubical complexes tend to be nontrivial but less pronounced \cite{bauer2014distributed}.  The reason for these empirical trends is a matter of ongoing research; see  \cite{bauer2019ripser} for a nice review.  New experimental evidence is reported in \S\ref{sec:experiments}.

In concrete terms, the clear/compress/twist optimization can be applied to Algorithm \ref{alg:revised_lrdec} by inserting the following lines at the very beginning of each iteration of the outer for-loop (that is, the loop that iterators over $i$):

\begin{algorithm}[H]
\SetAlgoLined
$\ldots$
\\
\For{$i\gets m$ \textbf{to} $1$}{
    \If{$M[p,i] \neq 0$ for some $p > i$}{
    \textbf{continue}
    }
$\cdots$
}
$\cdots$
\caption{The clear/compress/twist short circuit}
\label{alg:clearcompresstwistshortcircuit}
\end{algorithm}

\medskip

\subsubsection*{Minimal/steepness/emergent optimization}

Suppose that the $i$th iteration of the outer for-loop in Algorithm \ref{alg:revised_lrdec} corresponds to the $p$th matched row, meaning $i = \prow_p$.  Suppose, moreover, that either of the following two equivalent conditions holds true
    \begin{enumerate}[label=\textbf{(B\arabic*)}]
        \item \label{item:pareto1} The column index matched to row $\prow_p$ coincides with the leading nonzero entry of row $\prow_p$.  In symbols, $\prowp_p = \min \supp (\row_{\prow_p}(\DD))$.
        \item \label{item:pareto2} The first nonzero entry of row $\row_{\prow_p}(\DD)$ appears in column $j$, and $\col_j(\MM) = 0$ on this iteration of the outer for-loop.\footnote{If this condition is satisfied, then column $j$ of $\MM$ will \underline{become} nonzero at the end of iteration $i = \prow_p$, however.}  Concretely, this condition holds iff (i) $\DD[i,j] \neq 0$, (ii) $\DD[i,j'] = 0$ for $j' < j$, and  (iii) $\DD[i', j] = 0$ for $i < i'$.
    \end{enumerate}

\newcommand{\Par}{\mathrm{Par}}

We refer to the set of all pairs $(\prow_p,  \prowp_p) \in \IMatch$ such that \ref{item:pareto1} and \ref{item:pareto2} hold for $i = \prow_p$ as the set of \emph{Pareto pairs} of $\DD$.  This set is denoted $\Par(\DD)$.

Algorithm \ref{alg:revised_lrdec} performs no algebraic operations on any row $i$ such that $i = \prow_p$ for some $(\prow_p,  \prowp_p) \in \Par(\DD)$; this can be confirmed by a cursory examination of the procedure.  Rather, Algorithm \ref{alg:revised_lrdec} will simply set $\MM[i, \prowp_p]$ equal to $\DD[i, \prowp_p]$, and extend $\bar L$ to a matrix whose top row is $[1, 0, \ldots, 0]$.

 In a lazy regime where one constructs each row of $\DD$ on the fly, one can therefore short-circuit the construction of row $i$ as soon as (i) it has been determined that $i = \prow_p$ for some $(\prow_p,  \prowp_p) \in \Par(\DD)$, and (ii) index $\prowp_p$ and entry $\DD[\prow_p, \prowp_p]$ have been calculated.  

While elementary in principle, the implementation of this short-circuit technique can be complex in practice; some lazy constructors -- including many of the most commonly used constructors for filtered clique and cubical complexes -- do not generate the entries of each row in sorted order, so determining the first nonzero entry of row $i$ can entail a nontrivial computational cost.  Nevertheless, effective implementations do exist, and are regarded as essential to many of the fastest solvers currently available (at least, for computations involving clique complexes).

Unlike the short-circuit method for clear/compress/twist, this short-circuit method  cannot be easily expressed by inserting some additional lines into Algorithm \ref{alg:revised_lrdec}.  Rather, it calls for a modification to the lower-level subroutine that finds the first nonzero entry of row $i$ and then performs the associated update on $\bar \RR$ and $\MM$.  Thus, pseudocode is omitted.

\begin{remark}[Historical note]
The set $\Par(\DD)$ has received much attention over the past decade, and has been independently discovered by a variety of authors.  

Kahle \cite{kahle2011random} developed an important instance of this object in work on  certain probability spaces of simplicial complexes. Delgado-Friedrichs et al. considered an analogous construction for cubical complexes in \cite{delgado2014skeletonization}, which names the elements of $\Par(\DD)$ \emph{close pairs}.  Independently, Henselman-Petrusek defined $\Par(\DD)$ for arbitrary boundary matrices under the name \emph{Pareto frontier} in \cite{henselman2016matroid}, and later termed the elements of this set \emph{minimal pairs} \cite[Remark 8.4.2]{henselman2017}.  Bauer generalized the construction of Kahle to arbitrary filtered simplicial complexes for use in PH computations, in work that ultimately appeared in \cite{bauer2019ripser}.  Following the appearance of that work, Lampret independently developed a more general construction called a \emph{steepness matching}; unlike those which preceded it, the steepness matching is suitable for boundary matrices with coefficients in an arbitrary unital ring (by contrast, the predecessors restricted to field coefficients).

It has been further noted \cite{kahle2011random, bauer2019ripser, henselman2016matroid, lampret2020chain, delgado2014skeletonization} that $\Par(\DD)$ constitutes a discrete Morse vector field -- a fact with deeper  implications for both theory and algorithms.
\end{remark}

\section{Experiments}
\label{sec:experiments}

In \S\ref{sec:lazyumatch}-\ref{sec:factorization_algorithms}, we present a computational scheme for  U-match decomposition, storage, and retrieval of a matrix $\DD$.  While no restrictions are placed on $\DD$, the scheme is specifically chosen to work  with boundary operators of filtered chain complexes.  In particular, the scheme is optimized for applications where the following conditions hold:

\begin{enumerate}[label=\textbf{(C\arabic*)}]

    \item \label{item:matrixassumptioncompressed}  \emph{Algorithm \ref{alg:revised_lrdec} (compressed decomposition) returns smaller outputs than Algorithm \ref{alg_lrdec} (uncompressed decomposition). In particular,  $\Rirr$ holds substantially fewer nonzero entries than $\Ri$.}  

    \item \label{item:matrixassumptionantitranspose} 

    \emph{Algorithm  \ref{alg:revised_lrdec} requires less time and memory to decompose $\DD$  than $\DD^\perp$.}    

\end{enumerate}

Here we present numerical evidence that \ref{item:matrixassumptioncompressed} and \ref{item:matrixassumptionantitranspose} do hold for a broad range of filtered boundary operators $\DD$ found in topological data analysis, thus justifying our design decisions.  These experiments further support the thesis that

\begin{enumerate}[label=\textbf{(C\arabic*)}]
\setcounter{enumi}{2}
    \item \label{item:matrixassumptionlazy}  \emph{Compared to the standard alternative for computing cycle representatives in persistent homology -- right-reduction of $\DD$ -- the lazy U-match scheme  consumes substantially less time and memory.}
\end{enumerate}

\subsection{Design}

In order to test \ref{item:matrixassumptioncompressed} - \ref{item:matrixassumptionlazy}, we consider a set of real-world and simulated data sets.  Each data set engenders either a filtered simplicial complex or filtered cubical complex; in either case the complex becomes nullhomotopic by the last step in the filtration.

To each filtered complex, $X$, we associate a matrix $\DD$ representing the degree-2 boundary operator $\Chains_2(X, \binary) \to \Chains_1(X, \binary)$, where $\binary$ represents the Galois field of order 2.  The rows and columns of $\DD$ are sorted in ascending order of birth time.  

We pre-compute the index matching relation $\IMatch$ of $\DD$; this relation implicitly contains  the sequence of row-pivot elements $\prows = (\prows_1, \ldots, \prows_k)$ and column-pivot elements $\pcols = (\pcols_1, \ldots, \pcols_k)$.  We also pre-compute the number of off-diagonal nonzero entries in $\Ri$ and $\Rirr$, where $\Ri$ is the row operation matrix returned by Algorithm \ref{alg_lrdec}.  Finally, we measure the time and memory used by Algorithm \ref{alg:revised_lrdec} to decompose each of the following matrices: $\DD, \DD^\perp, \Drk, \Drk^\perp$.

The time and memory needed to decompose $\Drk^\perp$ represents an approximate lower bound on the cost of computing cycle representatives in a sequential non-greedy fashion\footnote{By contrast,  the method we propose is sequential and greedy.}.  Indeed, this is essentially the approach taken in \cite{vcufar2020ripserer}, where pivot-elements are precomputed via the cohomology algorithm, and $\DD_{\upto{m} \pcols}^\perp$ is decomposed by a procedure nearly identical to Algorithm \ref{alg:revised_lrdec}.  Since we include the decomposition of $\Drk^\perp$ in our analysis, we also include $\Drk$, for symmetry.

\subsection{Data sets}

Data used for the experiments are described below; all data sets and code for generating simulated data are available at \cite{UMatch_code}.

\smallskip
\noindent \textbf{Gaussian Random Fields} (GRF2DAni, GRF2DExp, GRF3DAni, and GRF3DExp): Gaussian random fields with exponential and anisotropic covariance structure in dimensions two and three were generated via the Julia package \texttt{GaussianRandomFields.jl}.  These fields were formatted as $(1000 \times 1000)$ and $(50 \times 50 \times 50)$ pixel arrays, then converted to filtered cubical complexes via the ``T-construction'' \cite{garin2020duality}.
\smallskip

\noindent \textbf{Erdos-Renyi} (ER100 and ER150): Filtered clique complexes for complete edge-weighted graphs on $100$ and $150$ vertices with weights drawn iid from the uniform distribution  
\smallskip

\noindent  \textbf{Uniform} (Uniform): A Vietoris-Rips complex for $500$ points sampled uniformly from the unit cube in $\mathbb{R}^{20}$ under the standard Euclidean metric.
\smallskip

\noindent \textbf{Torus} (Torus): A Vietoris-Rips complex for $500$ points sampled uniformly from the unit cube in $\mathbb{R}^3$, equipped with the metric $d(x,y) = \min_{z} \vert\vert x-(y+z)\vert\vert_2$, where $z$ runs over all points in $\{0, 1, -1\}^3$.
\smallskip

\noindent \textbf{Henneberg} (Henne): A Vietoris-Rips complex for $1000$ points subsampled randomly from the $5456$ points on the Henneberg surface inthree dimensions available at \cite{Stolz2020}. 

\noindent \textbf{Cyclo-octane} (Cyclo):  A Vietoris-Rips complex for $1000$ points subsampled randomly from the $6040$ points in $\mathbb{R}^{24}$ from the Cyclo-octane data set  available at \cite{Stolz2020}.

\begin{remark}
Very significant theoretical progress has been made in the theory of random cell complexes \cite{kahle2011random, linial2006homological, costa2016random, meshulam2009homological, adler2010persistent, hiraoka2018limit}.  However, since data sets used in applications rarely conform to the assumptions of any one probability model, it is convention to evaluate the performance of decomposition algorithms via benchmarking on a range of scientific data sets.  
\end{remark}

\subsection{Software}

All software is implemented in the Rust programming language; code for running these experiments is available at \cite{UMatch_code}. Results as described below are for computations  performed on a Dell PowerEdge R730xd server with 2.4GHz Intel Xeon E5-2640 processors and 512 GB of RAM, running Ubuntu 18.04 LTS.

The implementation of Algorithm \ref{alg:revised_lrdec} stores upper triangular matrices in CSR format, and matching matrices as a hash map.  For both clique and cubical complexes, we store the matrix $\DD$ in a compressed data structure that permits lazy look-up of rows and columns. While doing row reductions, the rows of $\DD$ are generated by a coface iterator; correspondingly, while doing column reductions, the columns of $\DD$ are generated by a face iterator.  The data structure for clique complexes closely resembles that of   \cite{bauer2019ripser}, and the structure used for cubical complexes closely resembles that of \cite{maria2014gudhi, kaji2020cubical}.  

Unlike CSC/CSR storage formats, the lazy structure that encodes $\DD$ has no analog of a ``transpose'' operation that can significantly speed up or slow down read-access to rows or columns.  Nor does it have a natural ``sub-index'' operation (since each row/column is built on the fly).  Therefore, each variant on   $\DD^\perp, \Drk, \Drk^\perp$, is encoded by a wrapper object which translates a call for a specific row or column into a call for a different row or column, as necessary.  To decompose $\DD, \DD^\perp, \Drk$, or $ \Drk^\perp$, we first load the source data for $\DD$ and the index matching relation $\IMatch$.  The necessary wrapper object is then constructed from $\DD$ and $\IMatch$.  

\begin{remark}
The statistics reported in Tables \ref{table:time} and \ref{table:memory} include the time required to load both $\DD$ \underline{and} the set of all matched row/column indices, even when $\IMatch$ is unused because the matrix to be decomposed is $\DD$ or $\DD^\perp$.  This convention is used in order to avoid variable confounds in benchmark results.
\end{remark}

In the case of clique complexes, we also implement the minimal/steepness/emergent optimization (\S \ref{sec:shortcircuitph}) to short-circuit construction of certain rows and columns.  This strategy makes less sense for cubical data, both because it is harder to find the leftmost element of a row (or the rightmost element of a column) in this regime, and because each row and column has a small number of nonzero entries in general (6 at most), which places a low ceiling on the potential benefit of short-circuiting the construction of these objects.

\subsection{Results and discussion}
We report the results of the experiments described above in Tables \ref{table:entries}, \ref{table:time}, and \ref{table:memory} at the end of the document. For each experiment, Table  \ref{table:entries} gives the number of non-zero entries in the different matrices we consider,  Table \ref{table:time} gives the decomposition time, and Table \ref{table:memory} gives the peak heap memory used during the decomposition memory. 

Our experiments verify postulate \ref{item:matrixassumptioncompressed}  that $\Rirr$ consistently uses less memory than $\Ri$; see Table \ref{table:entries}. This finding is commensurate with results from \cite{zhang2019hypha, de2011dualities} and other sources, which  suggest that the number of algebraic operations needed to reduce non-pivot rows and columns to zero tends to far outstrip the number of operations needed to reduce pivot rows and columns; note, in particular, that there is a 1-1 correspondence between row additions and off-diagonal entries of $\Ri$.  This difference exists \emph{even} for cubical complexes, where the gap in time and memory to reduce $\DD$ versus $\DD^\perp$ disappears.

Regarding postulate \ref{item:matrixassumptionantitranspose}, we find the decomposition time is similar for $\DD$ and $\DD^\perp$ when $\DD$ comes from a cubical complex and substantially faster for $\DD$ than $\DD^\perp$ when $\DD$ comes from a clique complex. These results recapitulate existing findings in the literature. In particular, it was noted in \cite{bauer2014distributed} that the gap in time/memory to reduce $\DD$ versus $\DD^\perp$ disappears for cubical complexes.

 Postulates \ref{item:matrixassumptioncompressed} and \ref{item:matrixassumptionantitranspose} jointly support the decision to place $\Rirr$ at the heart of our lazy decomposition, storage, and retrieval scheme.  In particular, decomposition of $\DD$ (which exposes $\Ri$) is faster than decomposition of $\DD^\perp$ (which exposes $\CC$), and storage of $\Rirr$ costs considerably less than storage of $\Ri$.
 
 \medskip

For postulate \ref{item:matrixassumptionlazy}, we find that clearing a pivot block by column operations uses similar-order-of-magnitude time and memory as does clearing the same pivot block by row operations.  In the standard approach to reducing a filtered boundary operator by row-reduction, one would apply the cohomology algorithm to $\DD_{\prows \upto{n}},$ having deleted all nonpivot rows via the clearing/compression optimization\footnote{Compression does not remove \underline{all} non-pivot rows, in general.  However, since the final space in our filtration is nullhomotopic, the image of $\partial_2$ equals the kernel of $\partial_1$, hence (by the rank-nullity theorem) the number of pivot columns of $\partial_1$ equals the number of non-pivot rows of $\partial_2$; therefore, compression removes all non-pivot rows for each of the data sets studied here.}. In this case, the time to reduce $\DD_{\prows \upto{n}}$ would be roughly similar to that of applying the same algorithm to $\Drk$ since construction of the vast majority of pivot rows is short-circuited, c.f.\ \cite{bauer2019ripser}.  Thus, the time needed to compute generators by first row-reducing (thus, revealing the pivot elements) then column-reducing the submatrix indexed by pivot indices, is approximately double the time required only to row-reduce.  On the other hand, performing a sparse triangular solve requires only a fraction of a second.  Thus, in regimes where only a small number of generators are required, the lazy approach offers concrete performance advantages.
    
The time and memory required to reduce $\Drk$ roughly approximates that required to reduce $\DD_{\prows \upto{n}}$, which is the matrix one would reduce with the standard cohomology algorithm under the classical clearing optimization.  Thus, our results replicate the finding that compression  accelerates computation (Table \ref{table:time}), even when the number of non-pivot rows or columns is small.

\begin{table}[ht]
\centering
\begin{tabularx}{\textwidth}{YYYYY}
  \toprule
        &&\multicolumn{3}{c}{number of nonzero entries}\\
         \cmidrule{3-5}
       dataset&size of $\partial_2$&$\MM$&$\Ri - \id$&$\Rirr - \id$\\
        \midrule
        GRF2DAni&$2,002,000\times1,000,000$&$1,000,000$&$70,471,346$&$15,634$\\
        \midrule
        GRF3DAni&$390,150\times382,500$&$257,500$&$12,043,279$&$163,733$\\
        \midrule
        GRF2DExp&$2,002,000\times1,000,000$&$1,000,000$&$53,229,639$&$310,373$\\
        \midrule
        GRF3DExp&$390,150\times382,500$&$257,500$&$9,270,009$&$77,735$\\
        \midrule
        ER100&$4,657\times134,654$&$4,558$&$49,561$&$4,241$\\
        \midrule
        ER150&$10,846\times504,017$&$10,697$&$154,897$&$17,245$\\
        \midrule
        Uniform&$112,586\times15,586,723$&$112,087$&$1,693,846$&$5,019$\\
        \midrule
        Torus&$91,162\times9,314,575$&$90,663$&$2,486,139$&$1,211$\\
        \midrule
        Henne&$411,484\times100,278,849$&$410,485$&$23,885,893$&$10,126$\\
        \midrule
        Cyclo&$300,712\times 47,272,174$&$299,713$&$14,897,198$&$5,572$\\
        \bottomrule
\end{tabularx}
\caption{Nonzero entries for several (sub)matrices associated with  U-match decomposition.  Each row corresponds to a clique or cubical complex, $X$. We pass the dimension-2 boundary operator $ \partial_2: \Chains_2(X, \binary) \to \Chains_1(X, \binary)$, represented as a matrix $\DD$, to Algorithm \ref{alg_lrdec} in order to obtain a U-match decomposition $\RR \MM = \DD \CC$ (equivalently, $\Ri \DD = \MM \Ci$). 
The two righthand columns report number of \underline{\smash{off-diagonal}} nonzero entries for $\Ri$ and $\Rirr$, respectively.  
Recall that matrix $\MM$ is uniquely determined by $\DD$; the number of nonzero entries in this matrix equals the number of pivot elements of the decomposition, and this number does not depend on choice of decomposition algorithm. By contrast, matrix $\Ri$ is not uniquely determined by $\DD$, and other decomposition algorithms may yield different results.  In this experiment, the number of off-diagonal entries in $\Rirr$ is typically even smaller than the number of nonzero entries in $\MM$, often by several orders of magnitude.
}
\label{table:entries}
\end{table}

\begin{table}[h]
\centering
\begin{tabularx}{\textwidth}{YYYYY}
  \toprule
        &\multicolumn{4}{c}{decomposition timing (seconds)}\\
         \cmidrule{2-5}
         &\multicolumn{2}{c}{row clearing}&\multicolumn{2}{c}{column clearing}\\
       dataset&full matrix&pivot block&full matrix&pivot block\\
        \midrule
        GRF2DAni&$128.80$&$6.12$&$8.36$&$9.25$\\
        \midrule
        GRF3DAni&$60.64$&$2.18$&$43.09$&$2.21$\\
        \midrule
        GRF2DExp&$85.03$&$7.06$&$9.43$&$9.37$\\
        \midrule
        GRF3DExp&$35.64$&$1.87$&$28.45$&$1.97$\\
        \midrule
        ER100&$4.77$&$0.20$&$98.00$&$0.07$\\
        \midrule
        ER150&$46.84$&$1.52$&$1,649.75$&$0.39$\\
        \midrule
        Uniform&$1,086.58$&$20.72$&$14,917.19$&$13.89$\\
        \midrule
        Torus&$1,042.05$&$11.79$&$973.93$&$8.02$\\
        \midrule
        Henne&$29,712.27$&$169.91$&$30,563.55$&$128.29$\\
        \midrule
        Cyclo&$11,688.37$&$76.77$&$47,381.64$&$48.65$\\
        \bottomrule
\end{tabularx}
\caption{Execution time for Algorithm \ref{alg:revised_lrdec}.  Each row corresponds to the dimension-2 boundary operator $ \partial_2: \Chains_2(X, \binary) \to \Chains_1(X, \binary)$ of a clique or cubical complex, $X$.  We identify this operator with its matrix representation, $\DD$. For each $X$, we pass $\DD, \DD^\perp, \Drk$ and $\Drk^\perp$ to Algorithm  \ref{alg:revised_lrdec} to obtain a compressed representation of a U-match decomposition $\RR \MM = \DD \CC$.  Reduction time for pivot blocks are roughly similar for row vs.\ column operations in both clique and cubical complexes.  For full matrices, column reduction is sometimes faster for cubical complexes, and sometimes slower for clique complexes.
}
\label{table:time}
\end{table}

\begin{table}[h]
\centering
\begin{tabularx}{\textwidth}{YYYYY}
  \toprule
        &\multicolumn{4}{c}{decomposition peak heap memory use (kb)}\\
         \cmidrule{2-5}
         &\multicolumn{2}{c}{row clearing}&\multicolumn{2}{c}{column clearing}\\
       dataset&full matrix&pivot block&full matrix&pivot block\\
        \midrule
        GRF2DAni&$1,241,720$&$1,0836,88$&$1,031,244$&$1,094,752$\\
        \midrule
        GRF3DAni&$321,644$&$279,016$&$321,348$&$279,300$\\
        \midrule
        GRF2DExp&$1,244,532$&$1,088,636$&$1,038,472$&$1,093,644$\\
        \midrule
        GRF3DExp&$321,348$&$278,900$&$321,780$&$276,096$\\
        \midrule
        ER100&$30,612$&$16,208$&$32,580$&$16,056$\\
        \midrule
        ER150&$109,296$&$45,788$&$98,412$&$45,768$\\
        \midrule
        Uniform&$2,002,032$&$1,231,860$&$1,881,004$&$1,231,844$\\
        \midrule
        Torus&$1,121,316$&$743,848$&$1,241,508$&$743,904$\\
        \midrule
        Henne&$13,449,404$&$7,808,640$&$10,132,272$&$7,808,800$\\
        \midrule
        Cyclo&$6,663,060$&$3,719,136$&$5,254,084$&$3,719,108$\\
        \bottomrule
\end{tabularx}
\caption{Memory use (peak heap) for Algorithm \ref{alg:revised_lrdec}.  Each row corresponds to the dimension-2 boundary operator $ \partial_2: \Chains_2(X, \binary) \to \Chains_1(X, \binary)$ of a clique or cubical complex, $X$.  We identify this operator with its matrix representation, $\DD$. For each $X$, we pass $\DD, \DD^\perp, \Drk$ and $\Drk^\perp$ to Algorithm  \ref{alg:revised_lrdec} to obtain a compressed representation of a U-match decomposition $\RR \MM = \DD \CC$.  Memory use is roughly similar for reduction of $\Drk$ versus $\Drk^\perp$ (that is, decomposition of the pivot matrix by row versus column operations).  Memory use is also similar for reduction of $\DD$ versus $\DD^\perp$ (that is, decomposition of $\DD$ by row versus column operations).  
}
\label{table:memory}
\end{table}

\section{Conclusion}
\label{sec:conclusion}
A host of problems in modern TDA can be answered with homological algebra, and indeed, linear algebra.  However, data structures with the capacity to store large quantities of linear data have proven to be a decisive bottleneck.  The absence of such structures limits our ability to perform an essential task in algebraic topology -- namely, to extract knowledge about shapes from diagrams of maps.  

The U-match strategy addresses this problem through compression and lazy evaluation.  It flexibly adapts to diverse problems in applied homological algebra, in particular computing bases for subspaces of cycles and boundaries. 
At the time of this writing, cycle representatives can be computed by a handful of TDA software packages but remain difficult to analyze or work with because the software that computes them presents the results \emph{a la carte} without exposed access to the underlying chain bases and matrices. 
Through U-match, we gain  access not only to cycle representatives but also to the bases and matrices necessary to manipulate them.   Our experiments, detailed in \S\ref{sec:experiments}, demonstrate the computational efficacy of this approach.
This same framework makes accessible the computations needed to implement an array of techniques from algebraic topology beyond simple persistent homology computations, at a scale that allows us to work with real data. Computing induced maps on (persistent) homology and working with diagrams and exact sequences are fundamental in pure topology. While the authors defer our own efforts in this direction to forthcoming and future work, we believe that access to these same methods will be of great utility to applied topologists in general.

\medskip

In developing the U-match framework, several directions for future effort became apparent. As we have seen, computation time and resources are always a constraint. Thus, developing variants of these methods that are appropriate for implementation in distributed computing environments or via GPU acceleration would be of value in many contexts. In addition, case-by case study of each of the 64 calculations described in Remark \ref{rmk:64cases} would potentially provide insights and further refinements of these methods in particular cases. Finally, we have performed only the most obvious experiments with these tools; further statistical analysis of the various components of the U-match decomposition in different contexts would improve our understanding of its capabilities and limitations.
	
\section*{Acknowledgements}

The authors would like to thank David Turner and Bryn Keller for their many contributions to the project. This material is based upon work supported by the National Science Foundation under grants  DMS-1854683, DMS-1854703 and DMS-1854748.

\bibliography{ffwg_bib}

\appendix

\section{Block identities}
\label{sec:blockidentities}

Posit a proper U-match decomposition 
    \begin{align}
    \RR \MM = \DD \CC.
    \tag{\ref{eq:umatchdef}}
    \end{align}
We showed in \S\ref{sec:overallblockstructure} that
    \begin{align}
    \begin{array}{l |cc|}
        \multicolumn{1}{c}{}& \multicolumn{1}{c}{\nprows} & \multicolumn{1}{c}{\prows}  \\  \cline{2-3}
        \nprows & \id & \RR_{\nprows\prows}   \\   
        \prows &  &  \RR_{\prows  \prows} \\ \cline{2-3}     
    \end{array}     
    &&    
    \begin{array}{l |cc|}
        \multicolumn{1}{c}{}& \multicolumn{1}{c}{\pcols} & \multicolumn{1}{c}{\npcols}  \\  \cline{2-3}
        \nprows &  &    \\   
        \prows & \MM_{\prows  \pcols} &   \\ \cline{2-3}     
    \end{array}     
    &&
    =
    &&
    \begin{array}{l |cc|}
        \multicolumn{1}{c}{}& \multicolumn{1}{c}{\pcols} & \multicolumn{1}{c}{\npcols}  \\  \cline{2-3}
        \nprows & \DD_{\nprows  \pcols} & \DD_{\nprows  \npcols}   \\   
        \prows & \DD_{\prows  \pcols} &  \DD_{\prows  \npcols} \\ \cline{2-3}     
    \end{array}     
    &&
    \begin{array}{l |cc|}
        \multicolumn{1}{c}{}& \multicolumn{1}{c}{\pcols} & \multicolumn{1}{c}{\npcols}  \\  \cline{2-3}
        \pcols & \CC_{\pcols\pcols} & \CC_{\pcols\npcols}   \\   
        \npcols &  &  \id \\ \cline{2-3}     
    \end{array} 
    \tag{\ref{eq:umatchblocks}}
	\end{align}  
In particular, axioms \ref{item:proper1} and \ref{item:proper2}, which define what it means for a U-match decomposition to be proper, are equivalent to equations 
    \begin{align}
        \CC_{\npcols \upto{n}} &= \id_{\npcols \upto{n}}
        \label{eq:properaxiom_blockform_ColOper}   
        \\
        \RR_{\upto{m} \nprows} &= \id_{\upto{m} \nprows }
        \label{eq:properaxiom_blockform_RowOper}
    \end{align}
respectively.

If we write $\prows^\perp = (\prows_k, \ldots, \prows_1)$ for the reverse of a finite sequence $\prows$, then the anti-transposed U-match decomposition $(\Ci)^\perp \MM = \DD^\perp (\Ri)^\perp$ has an analogous block structure:
    \begin{multline}
        \begin{array}{l |cc|}
        \multicolumn{1}{c}{}& \multicolumn{1}{c}{\npcols^\perp} & \multicolumn{1}{c}{\pcols^\perp}  \\  \cline{2-3}
        \npcols^\perp & \id & (\Ci_{\pcols\npcols})^\perp   \\   
        \pcols^\perp &  &  (\Ci_{\pcols\pcols})^\perp \\ \cline{2-3}     
    \end{array} 
    \quad 
    \begin{array}{l |cc|}
        \multicolumn{1}{c}{}& \multicolumn{1}{c}{\prows^\perp} & \multicolumn{1}{c}{\nprows^\perp}  \\  \cline{2-3}
        \npcols^\perp &  &    \\   
        \pcols^\perp & \MM_{\prows  \pcols}^\perp &   \\ \cline{2-3}     
    \end{array} 
     \\
    =
    \quad
    \begin{array}{l |cc|}
        \multicolumn{1}{c}{}& \multicolumn{1}{c}{\prows^\perp} & \multicolumn{1}{c}{\nprows^\perp}  \\  \cline{2-3}
        \npcols^\perp & \DD_{\nprows  \pcols}^\perp & \DD_{\nprows  \npcols}^\perp   \\   
        \pcols^\perp & \DD_{\prows  \pcols}^\perp &  \DD_{\prows  \npcols}^\perp \\ \cline{2-3}     
    \end{array}     
    \quad
    \begin{array}{l |cc|}
        \multicolumn{1}{c}{}& \multicolumn{1}{c}{\prows^\perp} & \multicolumn{1}{c}{\nprows^\perp}  \\  \cline{2-3}
        \prows^\perp & (\Ri_{\prows  \prows})^\perp & (\Ri_{\nprows\prows})^\perp   \\   
        \nprows^\perp &  & \id   \\ \cline{2-3}     
    \end{array} 
    \label{eq:umatchblocks_antitranspose}
	\end{multline}  
	
\begin{remark}
\label{rmk:pivotnonpivotorder}
    Notice, in particular, that the symbol representing the sequence of non-pivot column indices, $\npcols^\perp$, appears above and to the left of the symbol for pivot columns indices $\pcols^\perp$  wherever these symbols appear as row/column labels in Equation \eqref{eq:umatchblocks_antitranspose}.  This ordering is reversed in Equation \eqref{eq:umatchblocks}.  A similar observation holds for row labels. 
\end{remark}

We claim that 
    \begin{alignat}{4}
        \MM \Ci
        \;
        &= 
        \;
        \Ri \DD
        \;
        &&
        \equiv 
        \begin{array}{l |c|l}
            \multicolumn{1}{c}{}
            &
            \multicolumn{1}{c}{\upto{n}} 
            &  \multicolumn{1}{c}{} \\  \cline{2-2}
            \nprows &    \\ 
            \cline{2-2}
            \prows & 
            \Rirr \DD_{\prows  \upto{n}} &   \\ \cline{2-2}    
        \end{array}          
        \label{eq:umatchrowidentity}
    \\
        \RR \MM 
        \;
        &= 
        \;    
        \DD \CC
        \;    
        &&
        \equiv 
        \begin{array}{l |c|c|}
            \multicolumn{1}{c}{}& \multicolumn{1}{c}{\pcols} & \multicolumn{1}{c}{\npcols}  \\  \cline{2-3}
            \multirow{2}{*}{ $\upto{m}$ } &
            \multirow{2}{*}{ $\DD_{\upto{m}  \pcols} \CC_{\pcols \pcols}$ }
             &
            \\   
             &   &   \\ \cline{2-3}     
        \end{array}      
        \label{eq:umatchcolidentity} 
    \end{alignat}    
For proof, let us focus first on Equation \eqref{eq:umatchrowidentity}.  Identity $\MM \Ci = \Ri \DD$ follows directly from the defining equation \eqref{eq:umatchdef}.  Equation \eqref{eq:umatchblocks}  implies that $\Ri \DD$ has the following block structure 
\begin{align*}
     \Ri \DD
    \equiv 
    \begin{array}{l |cc|}
        \multicolumn{1}{c}{}& \multicolumn{1}{c}{\nprows} & \multicolumn{1}{c}{\prows}  \\  \cline{2-3}
        \nprows & \id &  (\Ri)_{\nprows \prows}    \\   
        \prows &  &  (\Ri)_{\prows \prows} \\ \cline{2-3}     
    \end{array}  
    \;
    \;
    \begin{array}{|c|}
        \multicolumn{1}{c}{\upto{n}} \\  \cline{1-1}
        \DD_{\nprows  \upto{n}}       \\   
         \DD_{\prows  \upto{n}}   \\ \cline{1-1}     
    \end{array} 
\end{align*}
hence $(\Ri \DD)_{\prows \upto{n}} = \Rirr \DD_{\prows \upto{n}}$; this accounts for the lower half of the matrix on the righthand side of \eqref{eq:umatchrowidentity}.  On the other hand, every non-pivot row of $\MM$ vanishes, hence $(\MM \Ci)_{\nprows \upto{n}}=0 $.  This accounts for the upper half of the matrix on the righthand side of \eqref{eq:umatchrowidentity}, and completes the proof of that equation.  Equation \eqref{eq:umatchcolidentity} can be argued in a similar fashion, invoking the identity 
\begin{align*}
    \DD \CC
    \equiv 
    \begin{array}{l |c|c|}
        \multicolumn{1}{c}{}& \multicolumn{1}{c}{\pcols} & \multicolumn{1}{c}{\npcols}  \\  \cline{2-3}
        \multirow{2}{*}{ $\upto{m}$ } &
        \multirow{2}{*}{ $\DD_{\upto{m}  \pcols}$ }
         &  \multirow{2}{*}{ $\DD_{\upto{m}  \npcols}$ }   \\   
         &   &   \\ \cline{2-3}     
    \end{array}     
    \;
    \;
    \begin{array}{|c|c|l}
        \multicolumn{1}{c}{\pcols} & \multicolumn{1}{c}{\npcols} &  \\  \cline{1-2}
        \CC_{\npcols\pcols} & \CC_{\pcols \npcols }&   \multicolumn{1}{c}{ \pcols}  \\   
        & \id & \multicolumn{1}{c}{ \npcols} \\ \cline{1-2}     
    \end{array}    
\end{align*}
Alternatively, one could prove Equation \eqref{eq:umatchcolidentity}  by showing that it is equivalent to Equation \eqref{eq:umatchrowidentity}, via anti-transpose duality.

Since the non-pivot rows of $\CC$ and the nonpivot columns of $\RR$ (hence also of $\Ri$) are  unit vectors (this holds by Axioms \ref{item:proper1} and \ref{item:proper1}, equivalently, by Equations \eqref{eq:properaxiom_blockform_ColOper} and \eqref{eq:properaxiom_blockform_RowOper}), it follows from Equations \eqref{eq:umatchrowidentity} and \eqref{eq:umatchcolidentity} that 
    \begin{align}
        \Ci
        \;
        &
        \equiv 
        \;
        \begin{array}{l |c|l}
            \multicolumn{1}{c}{}
            &
            \multicolumn{1}{c}{\upto{n}} 
            &  \multicolumn{1}{c}{} \\  \cline{2-2}
            \pcols & \Mirk \Rirr \DD_{\prows \upto{n}}    \\ 
            \cline{2-2}
            \npcols & \id_{\npcols \upto{n}} 
             &   \\ \cline{2-2}    
        \end{array}          
        \label{eq:umatchrowidentityInner}
    \\
        \RR 
        \;    
        &
        \equiv 
        \begin{array}{l |c|c|}
            \multicolumn{1}{c}{}& \multicolumn{1}{c}{\nprows} & \multicolumn{1}{c}{\prows}  \\  \cline{2-3}
            \multirow{2}{*}{ $\upto{m}$ } &
            \multirow{2}{*}{ $\id_{\upto{m} \nprows}$ }
             &
            \multirow{2}{*}{ $\DD_{\upto{m}  \pcols} \CC_{\pcols \pcols} \Mirk$ }
            \\   
             &   &   \\ \cline{2-3}     
        \end{array}      
        \label{eq:umatchcolidentityInner} 
    \end{align}

\subsection{Inner identities and proof of Theorem \ref{thm:umatchblockidentities}} 
\label{sec:proofofmatrixidentities}
    
Let us define
    \begin{align}
        \AA = \Rirr \Drk
    \end{align}
Then 
    \begin{align}
        \Drk &= \Rrr \AA 
        \\
        \Drki &= \Ai \Rirr
        \label{eq:smallmatrixconversion}
    \end{align}
It follows from Equation \eqref{eq:umatchblocks} that $\Rrr \Mrk  = \Drk \Ccc$, therefore
    \begin{align}
        \Mrk    & = \Rirr \Drk \Ccc \\
                & = \AA \Ccc\\
    \end{align}
Hence
    \begin{align}
        \Ccc & = \Ai \Mrk
        \label{eq:ARtip}
    \end{align}

We are now ready to prove Theorem \ref{thm:umatchblockidentities}.  Let us first recall the statement of this result:

\newtheorem*{thm:associativity}{Theorem \ref{thm:umatchblockidentities}}
\begin{thm:associativity}[Inner identities]
Posit a proper U-match decomposition $\RR \MM = \DD \CC$, and let $\AA = \Rirr \DD_{\prows  \pcols} = (\Ri \DD)_{\prows \pcols}$.  Then the  following matrix identities hold, where
    \begin{enumerate}
        \item blank entries indicate zero blocks
        \item permutations on the rows and columns of $\RR, \Ri, \CC, \Ci, \Ri \DD,$ and $\DD \CC$ are indicated by the sequences $\prows, \nprows, \pcols, \npcols, \upto{m},\upto{n}$, which appear as labels for blocks of row and column indices.
    \end{enumerate}
\begin{alignat}{2}
    &
    \CC 
    \;
    &&\equiv 
    \;
    \begin{array}{l |cc|}
        \multicolumn{1}{c}{}& \multicolumn{1}{c}{\pcols} & \multicolumn{1}{c}{\npcols}  \\  \cline{2-3}
        \pcols & \Ai \MM_{\prows \pcols} & - \Ai \Rirr \DD_{\prows \npcols}     
        \\   
        \npcols &  &  \id \\ \cline{2-3}     
    \end{array}     
    \tag{\ref{eq:inneridentity1}}    
    \\[1em]
    %
    &
    \Ci
    \;
    &&\equiv 
    \;
    \begin{array}{l |c|l}
        \multicolumn{1}{c}{}
        &
        \multicolumn{1}{c}{\upto{n}} 
        &  \multicolumn{1}{c}{} \\  \cline{2-2}
        \pcols & \MM_{\prows \pcols}^{-1} \Rirr \DD_{\prows  \upto{n}}   \\ 
        \cline{2-2}
        \npcols & \id_{\npcols \upto{n} }
        &   \\ \cline{2-2}    
    \end{array}   
    \tag{\ref{eq:inneridentity2}}    
    \\[1em]
    %
    &
    \Ri 
    \;
    &&\equiv 
    \;
    \begin{array}{l |cc|}
        \multicolumn{1}{c}{}& \multicolumn{1}{c}{\nprows} & \multicolumn{1}{c}{\prows}  \\  \cline{2-3}
        \nprows & \id &  -\DD_{\nprows  \pcols} \DD_{\prows \pcols}^{-1}   \\
        \prows &  &  \Rirr \\ \cline{2-3}     
    \end{array}    
    \tag{\ref{eq:inneridentity3}}    
    \\[1em]
    %
    &
    \RR
    \;
    &&\equiv 
    \;
    \begin{array}{l |c|c|}
        \multicolumn{1}{c}{}& \multicolumn{1}{c}{\nprows} & \multicolumn{1}{c}{\prows}  
        \\  \cline{2-3}
         \upto{m} 
        &
        \id_{\upto{m} \nprows} 
         &
         \DD_{\upto{m}  \pcols} \Ai
         \\ \cline{2-3}     
    \end{array}     
    \tag{\ref{eq:inneridentity4}}
\end{alignat}
\end{thm:associativity}
\begin{proof}
Equation \eqref{eq:ARtip} provides the last equality in the following sequence
    \begin{align*}
        \RR_{\upto{m} \prows} 
        = 
        \DD \CC_{\upto{n} \pcols} \Mirk 
        =
        \DD_{\upto{m} \pcols} \CC_{ \pcols \pcols} \Mirk         
        =
        \DD_{\upto{m} \pcols } \Ai
    \end{align*}
Identity \eqref{eq:inneridentity4} follows, if we recall that non-pivot columns are unit vectors, as per Equation \eqref{eq:properaxiom_blockform_RowOper}.  It follows, therefore, that 
    \begin{align}
        \RR
        \equiv
        \begin{array}{l |cc|}
            \multicolumn{1}{c}{}& \multicolumn{1}{c}{\nprows} & \multicolumn{1}{c}{\prows}  \\  \cline{2-3}
            \nprows & \id &  \DD_{\nprows  \pcols} \Ai   \\
            \prows &  &  \Rrr \\ \cline{2-3}     
        \end{array}         
        \label{eq:rrblockformwithA}
    \end{align}
One can verify that the product of the two matrices on the righthand sides of Equations \eqref{eq:rrblockformwithA} and \eqref{eq:inneridentity3} is equal to $I$.  This proves Equation \eqref{eq:inneridentity3}, since inverses are unique.

Equation \eqref{eq:inneridentity2} was proved in the preceding discussion (Equation \eqref{eq:properaxiom_blockform_ColOper}).  If we assume Equation \eqref{eq:inneridentity1}, then $(\CC \Ci)_{\pcols \pcols}  =  \Ai \Mrk \cdot \Mirk \Rirr \Drk = \Drki \Drk = \id$ and Equation \eqref{eq:smallmatrixconversion} yields both under braces in the following expression:
    \begin{align*}
        (\CC \Ci)_{\pcols \npcols} 
        =
        \underbrace
            {\Ai \Mrk \Mirk \Rirr} 
            _
            {\Drki}
            \DD_{\prows \npcols}
        -
        \underbrace
            {\Ai \Rirr} 
            _
            {\Drki}     
            \DD_{\prows \npcols}
        =
        0
    \end{align*}
In particular, the formula for $\Ci$ given in Equation \eqref{eq:inneridentity1} satisfies the condition $\Ci \CC = \id$.  Correctness of Equation \eqref{eq:inneridentity1} follows by uniquenss of inverses.  This completes the proof.
\end{proof}

\subsection{Further identities}

\begin{proposition}
\label{prop:umatchblockidentities}
Let $\AA = \Rirr \DD_{\prows  \pcols} = (\Ri \DD)_{\prows \pcols}$.  The following matrix identities hold, where
    \begin{enumerate}
        \item blank entries indicate zero blocks
        \item by abuse of notation, the matrices $\RR, \Ri, \CC, \Ci, \Ri \DD,$ and $\DD \CC$ that appear on the left side of each equation are understood to have their rows and columns permuted as indicated by the row/column labels on the right side of each equation.
    \end{enumerate}
\begin{align*}
    \CC 
    \;
    &= 
    \;
    \begin{array}{l |cc|}
        \multicolumn{1}{c}{}& \multicolumn{1}{c}{\prows} & \multicolumn{1}{c}{\npcols}  \\  \cline{2-3}
        \pcols & \AA^{-1} &    \\   
        \npcols &  &  \id \\ \cline{2-3}     
    \end{array}     
    \;
    \;
    \begin{array}{|cc|}
        \multicolumn{1}{c}{\prows} & \multicolumn{1}{c}{\npcols}  \\  \cline{1-2}
        \id & -\Rirr \DD_{\prows  \npcols}   \\   
        &  \id \\ \cline{1-2}     
    \end{array}     
    \;
    \;
    \begin{array}{|cc|l}
        \multicolumn{1}{c}{\pcols} & \multicolumn{1}{c}{\npcols} & \multicolumn{1}{c}{} \\  \cline{1-2}
        \MM_{\prows  \pcols} & & \    \\   
         &  \id &  \\ \cline{1-2}     
    \end{array}    
     && \CC_{\pcols\npcols} & = - (\DD_{\prows  \pcols})^{-1} \DD_{\prows  \npcols}
    \\[1em]
    %
    \Ci
    \;
    &\equiv 
    \;
    \begin{array}{l |cc|}
        \multicolumn{1}{c}{}& \multicolumn{1}{c}{\prows} & \multicolumn{1}{c}{\npcols}  \\  \cline{2-3}
        \pcols & (\MM_{\prows \pcols})^{-1} &    \\   
        \npcols &  &  \id \\ \cline{2-3}     
    \end{array}     
    \;
    \;
    \begin{array}{|cc|}
        \multicolumn{1}{c}{\prows} & \multicolumn{1}{c}{\npcols}  \\  \cline{1-2}
        \id & \Rirr \DD_{\prows  \npcols}   \\   
        &  \id \\ \cline{1-2}     
    \end{array}     
    \;
    \;
    \begin{array}{|cc|l}
        \multicolumn{1}{c}{\pcols} & \multicolumn{1}{c}{\npcols} & \multicolumn{1}{c}{} \\  \cline{1-2}
        \AA & &     \\   
         &  \id &  \\ \cline{1-2}     
    \end{array}   
    \\[1em]
    %
    \Ri 
    \;
    &\equiv 
    \;
    \begin{array}{l |cc|}
        \multicolumn{1}{c}{}& \multicolumn{1}{c}{\nprows} & \multicolumn{1}{c}{\prows}  \\  \cline{2-3}
        \nprows & \id &  -\DD_{\nprows  \pcols} \AA^{-1}   \\   
        \prows &  &  \id \\ \cline{2-3}     
    \end{array}     
    \;
    \;
    \begin{array}{|cc|l}
        \multicolumn{1}{c}{\nprows} & \multicolumn{1}{c}{\prows} & \multicolumn{1}{c}{} \\  \cline{1-2}
        \id & &     \\   
         &  \Rirr &  \\ \cline{1-2}     
    \end{array}  
    &&
    (\Ri)_{\nprows\prows} &= -\DD_{\nprows  \pcols} (\DD_{\prows  \pcols})^{-1}
    \\[1em]
    %
    \RR
    \;
    &\equiv 
    \;
    \begin{array}{l |cc|}
        \multicolumn{1}{c}{}& \multicolumn{1}{c}{\nprows} & \multicolumn{1}{c}{\prows}  \\  \cline{2-3}
        \nprows & \id &     \\   
        \prows &  &  \RR_{\prows  \prows} \\ \cline{2-3}     
    \end{array}  
    \;
    \;
    \begin{array}{|cc|l}
        \multicolumn{1}{c}{\nprows} & \multicolumn{1}{c}{\prows} & \multicolumn{1}{c}{} \\  \cline{1-2}
        \id & \DD_{\nprows  \pcols} \AA^{-1} &     \\   
         &  \id &  \\ \cline{1-2}     
    \end{array}      
\end{align*}

Moreover,
\begin{align*}
    \MM \Ci
    \;
    &= 
    \;
    \Ri \DD
    \;
    \equiv 
    \begin{array}{l |c|l}
        \multicolumn{1}{c}{}
        &
        \multicolumn{1}{c}{\upto{n}} 
        &  \multicolumn{1}{c}{} \\  \cline{2-2}
        \nprows &    \\ 
        \cline{2-2}
        \prows & 
        \Rirr \DD_{\prows  \upto{n}} &   \\ \cline{2-2}    
    \end{array}     
\\
    \RR \MM 
    \;
    &= 
    \;
    \DD \CC
    \;
    \equiv 
    \;  
    \begin{array}{l |c|c|}
        \multicolumn{1}{c}{}& \multicolumn{1}{c}{\pcols} & \multicolumn{1}{c}{\npcols}  \\  \cline{2-3}
        \multirow{2}{*}{ $\upto{m}$ } &
        \multirow{2}{*}{ $\DD_{\upto{m}  \pcols} \CC_{\pcols \pcols}$ }
         &
        \\   
         &   &   \\ \cline{2-3}     
    \end{array}      
\end{align*}
\end{proposition}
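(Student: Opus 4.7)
The plan is to derive Proposition \ref{prop:umatchblockidentities} entirely from the inner identities of Theorem \ref{thm:umatchblockidentities} together with the one-line factorization $\AA = \Rirr \Drk$, which immediately yields the pair of algebraic conversions $\Ai \Rirr = \Drki$ and $\Rrr \AA = \Drk$ used repeatedly in what follows. The last two identities of the proposition, for $\MM \Ci = \Ri \DD$ and $\RR \MM = \DD \CC$, were already proved earlier in this appendix as Equations \eqref{eq:umatchrowidentity} and \eqref{eq:umatchcolidentity}, so no new argument is required for them; I would simply cite those.

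For the four single-matrix identities (those for $\CC$, $\Ci$, $\Ri$, $\RR$), the strategy is direct block multiplication of the two- or three-factor products on the right-hand sides, comparing against the closed-form expressions supplied by Theorem \ref{thm:umatchblockidentities}. For instance, for $\CC$ I would multiply the three upper-unitriangular block factors and obtain $\Ai \MM_{\prows\pcols}$ in the $(\pcols,\pcols)$ block and $-\Ai \Rirr \DD_{\prows\npcols}$ in the $(\pcols,\npcols)$ block, matching \eqref{eq:inneridentity1}; an analogous computation handles $\Ci$ using $\Mirk \AA = \Mirk \Rirr \Drk$ to match \eqref{eq:inneridentity2}. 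For $\Ri$ and $\RR$, I would use $\Ai \Rirr = \Drki$ and $\Rrr \AA = \Drk$ to recognize the $(\nprows,\prows)$ block of $\Ri$ as $-\DD_{\nprows\pcols}\Drki$ and the $(\prows,\prows)$ block of the factored $\RR$ as $\Rrr \cdot \id = \Rrr$ coming from $\DD_{\prows\pcols}\Ai = \Rrr \AA \Ai = \Rrr$, matching \eqref{eq:inneridentity3} and \eqref{eq:inneridentity4} respectively.

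The two side-equalities displayed to the right of the $\CC$ and $\Ri$ identities, namely $\CC_{\pcols\npcols} = -\Drki \DD_{\prows\npcols}$ and $(\Ri)_{\nprows\prows} = -\DD_{\nprows\pcols}\Drki$, then follow on the spot by substituting $\Ai \Rirr = \Drki$ into the corresponding blocks of \eqref{eq:inneridentity1} and \eqref{eq:inneridentity3}.

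\textbf{Anticipated difficulty.} There is no genuine analytic obstacle here; all the substance has already been absorbed into Theorem \ref{thm:umatchblockidentities}. The only care needed is bookkeeping: faithfully tracking the row/column orderings indicated by the sequences $\prows, \nprows, \pcols, \npcols$, maintaining the convention that blank blocks are zero, and not confusing $\AA$, $\Rrr$, $\Drk$ and their inverses with one another. In particular, the factored form chosen for the right-hand sides is engineered so that every inner factor is block-upper-unitriangular, making the block multiplication essentially mechanical.
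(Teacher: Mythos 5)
Your plan is correct and is the natural way to establish the proposition; the paper in fact states Proposition~\ref{prop:umatchblockidentities} without an explicit proof, evidently treating it as a direct consequence of Theorem~\ref{thm:umatchblockidentities} together with the conversions $\AA = \Rirr\Drk$, $\Drki = \Ai\Rirr$, and the already-proved Equations~\eqref{eq:umatchrowidentity}--\eqref{eq:umatchcolidentity}, which is exactly what you propose. Your block multiplications check out (e.g.\ for $\RR$ the $(\prows,\prows)$ block becomes $\Rrr$ since $\Drk\Ai = \Rrr\AA\Ai = \Rrr$, and for $\Ri$ the $(\nprows,\prows)$ block becomes $-\DD_{\nprows\pcols}\Ai\Rirr = -\DD_{\nprows\pcols}\Drki$), so there is nothing to add.
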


\section{Connections to order theory}
\label{sec:asideonordertheory}

This result, due to Birkhoff \cite{birkhoff1973lattice}, states that every pair of poset maps $F: \upto{p} \to \lata$, $G: \upto{q} \to \lata$ into a modular order lattice $\lata$ extend to a lattice homomorphism $H: \latb \to \lata$, where $\latb$ is the free distributive lattice generated by $\upto{p}$ and $\upto{q}$.  Concretely, $\latb$ can be realized as the lattice of down-closed subsets of the product poset $\upto{p} \times \upto{q}$.  It turns out that the nonzero entries of $\MM$ correspond exactly to the indices $(s,t)$ such that $H(\upto{s} \times \upto{t}) > H(\upto{s} \times \upto{t} - \{(s,t) \})$, when $\lata$ is the subspace lattice of $\field^m$, $F_s$ is the subspace of $\field^m$ consisting of vectors supported on $\upto{s}$, and $G_t$ is the column space of $D_{\upto{m},\upto{t}}$ \cite{grandis2012homological}. This unassuming fact has proved useful in extending the  ideas of persistent homology from the setting of linear maps and vector spaces to more general algebraic settings \cite{henselman2019decomposition}.

\section{Short-circuit techniques for acceleration and sparsification}
\label{sec:earlystopping}

Many applications of U-match decomposition  make use of both a domain \COB{} $\CC$ and a codomain \COB{} $\RR$.  However, a substantial subset of these applications do not rely on the assumption that $\RR$ is the \underline{\smash {specific}} \COB{} corresponding to $\CC$ (if the decomposition is proper), or even that  $\RR \MM = \DD \CC$.  Rather, in these cases it suffices to assume that there \underline{\smash{exist}} U-match decompositions $\hat \RR \MM = \DD \CC$ and $\RR \MM = \DD \hat \CC$ for \underline{\smash{some}} upper unitriangular $\hat \RR$ and $\hat \CC$.  Such is the case, for example, in  persistent (co)homology computations that require cycle representatives in both persistent homology and persistent cohomology, but not a specific correspondence between the two.\footnote{Of course, in other applications it may be vitally important to understand the correspondence between the two, and in such cases one should probably bite the bullet and requite $\RR \MM = \DD \CC$.}

In such cases, it can be advantageous, computationally, to obtain a $\CC$, which is as sparse as possible. Sparsification encompasses a challenging class of problems in matrix algebra, generally. However, the following observation provides several highly practical heuristics:  recall from Corollary \ref{cor:1blockdiff} that any two proper domain \COB{}s associated to the same mapping array $\DD$ can differ from one another only in the columns indexed by $\VAL(\IMatch) = \{\pcol_1, \ldots, \pcol_k\}$, i.e., only in pivot columns.  In reality, sparsifying $\CC$ therefore means sparsifying pivot columns, since each non-pivot column is uniquely determined.

\begin{lemma}
\label{lem:iffRcol}
Let $\pcol_p$ be a pivot column index, $v$ be a column vector, and $\CC$ be a U-match column operation matrix.  Then swapping column $\pcol_p$ of $\CC$ with $v$ results in a new column operation matrix iff the following conditions hold:
    \begin{enumerate}
        \item $v[\pcol_p] = 1$ and $v[i] = 0$ for $i > \pcol_p$, and
        \item $(\DD v)[i] = 0$ for $i > \pcolp_p$
    \end{enumerate}
\end{lemma}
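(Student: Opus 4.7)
The plan is to work through both directions, tracking the effect of replacing $\col_{\pcol_p}(\CC)$ with $v$ on the equation $\RR \MM = \DD \CC$. By Theorem \ref{thm:matchingunique}, the matching array is uniquely determined by $\DD$, so if the modified matrix $\CC'$ is a valid column operation matrix at all, it must pair with the same $\MM$ via some upper unitriangular $\RR'$ satisfying $\RR' \MM = \DD \CC'$.

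For the forward direction, upper unitriangularity of $\CC'$ immediately gives condition (1), and the matching identity (Lemma \ref{lem:matchingidentity}) applied to column $\pcol_p$ yields $\DD v = \MM[\pcolp_p, \pcol_p] \cdot \col_{\pcolp_p}(\RR')$. Upper triangularity of $\RR'$ then forces $(\DD v)[i] = 0$ for $i > \pcolp_p$, which is condition (2).

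For the reverse direction, I plan to construct $\RR'$ explicitly: for each matched row $r = \pcolp_q$ with $q \neq p$ set $\col_r(\RR') := \col_r(\RR)$; for each unmatched row $r$ set $\col_r(\RR') := e_r$; and define $\col_{\pcolp_p}(\RR') := \MM[\pcolp_p, \pcol_p]^{-1} \cdot \DD v$. Verifying $\RR' \MM = \DD \CC'$ is then a column-by-column calculation, since $\CC'$ agrees with $\CC$ outside column $\pcol_p$, and column $\pcol_p$ of $\RR' \MM$ reduces to $\MM[\pcolp_p, \pcol_p] \cdot \col_{\pcolp_p}(\RR') = \DD v$ by construction. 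Condition (2) ensures that $\col_{\pcolp_p}(\RR')$ vanishes on rows strictly below $\pcolp_p$, while the remaining columns of $\RR'$ are either columns of the upper unitriangular $\RR$ or standard unit vectors.

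The main obstacle is showing that the diagonal entry of $\col_{\pcolp_p}(\RR')$ equals $1$; equivalently, that $(\DD v)[\pcolp_p] = \MM[\pcolp_p, \pcol_p]$. Condition (2) controls entries strictly below row $\pcolp_p$ but says nothing at $\pcolp_p$ itself. To handle this, I plan to write $v = \col_{\pcol_p}(\CC) + w$, where condition (1) and upper unitriangularity of $\CC$ force $w \in \filta_{\pcol_p - 1}$. Since $\DD \col_{\pcol_p}(\CC) = \MM[\pcolp_p, \pcol_p] \cdot \col_{\pcolp_p}(\RR)$ already contributes the correct value at row $\pcolp_p$, it suffices to show $(\DD w)[\pcolp_p] = 0$, that is, $\DD_\bullet \filta_{\pcol_p - 1} \cap \filtb_{\pcolp_p} \subseteq \filtb_{\pcolp_p - 1}$. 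Theorem \ref{thm:fundamentalsubspaces} together with the matching identity identifies $\DD_\bullet \filta_{\pcol_p - 1}$ with the span of $\{\col_{\pcolp_q}(\RR) : q < p\}$. Each such column has its leading nonzero entry at row $\pcolp_q$, and the bijection $\mRow$ forbids $\pcolp_q = \pcolp_p$ for $q < p$; an iterated back-substitution argument---peeling off, at each step, the basis vector whose leading row index $\pcolp_q$ is currently the largest---shows that any element of this span lying in $\filtb_{\pcolp_p}$ must in fact lie in $\filtb_{\pcolp_p - 1}$, completing the plan.
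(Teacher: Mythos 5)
The paper states Lemma \ref{lem:iffRcol} without proof, so there is nothing to compare against; I evaluate your argument on its own terms. It is correct, and it isolates the one genuinely substantive point. The forward direction is routine once one invokes uniqueness of the matching array (Theorem \ref{thm:matchingunique}) to pin down $\MM$ and then applies the matching identity to the hypothetical decomposition $\RR'\MM = \DD\CC'$: unitriangularity of $\CC'$ gives condition (1), and upper triangularity of $\RR'$ together with $\DD v = \MM[\pcolp_p,\pcol_p]\cdot\col_{\pcolp_p}(\RR')$ gives condition (2). For the converse, you correctly observe that the column-by-column verification of $\RR'\MM = \DD\CC'$ and the sub-diagonal vanishing of $\col_{\pcolp_p}(\RR') := \MM[\pcolp_p,\pcol_p]^{-1}\DD v$ are immediate, and that the only thing conditions (1)--(2) do \emph{not} deliver is the diagonal normalization $(\DD v)[\pcolp_p] = \MM[\pcolp_p,\pcol_p]$. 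Your reduction via $v = \col_{\pcol_p}(\CC) + w$ with $w \in \filta_{\pcol_p-1}$ correctly transfers the burden to the inclusion $\DD_\bullet\filta_{\pcol_p-1}\cap\filtb_{\pcolp_p}\subseteq\filtb_{\pcolp_p-1}$, and that inclusion does hold: by Theorem \ref{thm:fundamentalsubspaces} and the matching identity, $\DD_\bullet\filta_{\pcol_p-1}$ is spanned by $\{\col_{\pcolp_q}(\RR) : q<p\}$; each such column has maximum support index exactly $\pcolp_q$ (unitriangularity of $\RR$), these indices are distinct, and none equals $\pcolp_p$ (injectivity of $\mRow$). Hence every nonzero element of this span has maximum support index in a set excluding $\pcolp_p$, so lying in $\filtb_{\pcolp_p}$ already forces it into $\filtb_{\pcolp_p-1}$. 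That is the content of your ``iterated back-substitution'' step; for a final write-up I would phrase it directly in terms of the maximum support index of a generic nonzero combination rather than as an iterated peeling process, which obscures a short argument. Two small points to tighten: (i) replacing the unmatched columns of $\RR$ by unit vectors is superfluous, since those columns multiply zero rows of $\MM$ and so never affect $\RR'\MM$ --- leaving them as in $\RR$ is simpler; (ii) the claim $\DD w \in \filtb_{\pcolp_p}$ uses \emph{both} condition (2) on $\DD v$ \emph{and} the vanishing of $\col_{\pcolp_p}(\RR)$ below row $\pcolp_p$, not condition (2) alone, and this should be stated explicitly.
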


The heuristics afforded by Lemma \ref{lem:iffRcol} can be described as follows.  Suppose we need to calculate a vector $v$ which is the $\pcol_p$th column of a column-operation matrix associated to $\DD$, and that we have computed $\Ri$ and saved $\Rirr$, as per the proposed compression scheme.  By Proposition \ref{thm:umatchblockidentities}, we can compute column $\pcol_p$ of the column operation matrix that corresponds to $\Ri$ via $\col_{\pcol_p}(\AA^{-1}) \cdot \MM[\pcolp_p, \pcol_p]$.

If we have not saved $\AA^{-1}$ to memory, then we can recover this column by back-substitution. This process entails a sequence of vectors $v_0, \ldots, v_k$, where $v_0$ is the $p$th standard unit vector $e_p = (0, \ldots, 1, \ldots, 0)$, $v_k$ satisfies $\AA v_k = e_p$, and each $v_{t+1}$ differs from $v_t$ by adding at most one nonzero coefficient. If, for any $t$, the vector $v_t$ satisfies the criterion of Lemma \ref{lem:iffRcol}, then we may stop the process early; $v_t$ is already a serviceable column vector, and may have fewer nonzero entries than $\col_{\pcol_p}(\AA^{-1})$.
     
On the other hand, if we already have saved $\AA^{-1}$ to memory, then we may return $v = \col_{\pcol_p}(\AA^{-1}) \cdot \MM[\pcolp_p, \pcol_p]$ directly.  However, we may also delete any nonzero coefficient $v[\pcol_i]$ for which $i<p$ and  $\DD[i, \pcol_q] = 0$ for $i > \pcolp_p$, since deletion of such coefficients will still result in a column vector $v$ which satisfies the criteria of Lemma \ref{lem:iffRcol}.
    
    As a special case of the preceding two optimizations, we may take $v$ to be the standard unit vector whenever $\DD[\pcolp_p, \pcol_p]$ is the lowest nonzero entry in $\DD$.  Where this condition is satisfied, the solution is zero-cost globally optimal.

\medskip

In practice, early stopping, deletion, and zero-cost global optima are all highly relevant to performant persistent (co)homology computation, since it has been observed empirically that the overwhelming majority of pivot columns satisfy the condition for a zero-cost global optimum, in many applied settings \cite{henselman2016matroid, zhang2019hypha, bauer2019ripser, lampret2020chain}.

\section{Lazy access to Jordan bases: alternative approaches}
\label{sec:lazy_jordan_alt}

 U-match factorization provides at least three distinct lazy  approaches to compute a filtered Jordan basis of the filtered differential operator $\DD$: 
 
 \begin{itemize}[align=left,style=nextline,leftmargin=*,font=\normalfont]
 \item[\textbf{Strategy 1:}] Apply Algorithm \ref{alg:revised_lrdec} to obtain the invertible submatrix $\Rirr$ corresponding to a proper U-match decomposition $\RR \MM = \DD \CC$.  The Jordan basis can be constructed directly from $\CC$, as per Theorem \ref{thm:jordanfromumatch}.  Apply the  methods from \S\ref{sec:lazyumatch} to access the columns of $\CC$ in a lazy fashion.

\item[\textbf{Strategy 2:}] Apply Strategy 1 with the following modification: instead of  constructing the columns of $\CC$ exactly, use the early stopping criterion from Appendix \ref{sec:earlystopping} to construct the columns of a \underline{\smash{possibly different}} matrix $\stilde \CC$.  Matrix $\stilde \CC$ is a domain \COB{} for some U-match decomposition $\stilde \RR \MM = \DD \stilde \CC$, and, as such, Theorem \ref{thm:jordanfromumatch} still applies.  We can therefore construct a Jordan basis from the columns of $\stilde \CC$ using that formula.

\item[\textbf{Strategy 3:}] Apply Algorithm \ref{alg:revised_lrdec} to the anti-transposed matrix $\DD^\perp$.  Doing so is nearly equivalent to performing the standard persistent homology column  algorithm \cite{de2011dualities} on $\DD$; the only functional difference concerns the amount of data that one retains or deletes at each step of the elimination process. Informally, this process is the natural counterpart to Algorithm \ref{alg_lrdec} in which one adds columns left to right, rather than adding rows from bottom to top. One can use the resulting U-match decomposition to obtain a Jordan basis, as per Theorem \ref{thm:jordanfromumatch}.  However, in this case \underline{\smash{the early stopping strategy does not}} \underline{\smash{apply}} since the process used to extract the necessary column vectors from the U-match decomposition involves only re-indexing of column vectors, and insertion of a few entries of 0 or 1, and thus, no clearing operations.
 \end{itemize}

If we write $\EE_1$, $\EE_2$, and $\EE_3$ for the bases produced by strategies 1, 2, and 3, respectively, then no two of these arrays must necessarily  equate.  This fact is simplest to observe in the case of $\EE_1$ versus $\EE_2$, since the early stopping procedure produces strictly sparser matrices by design.  

To see how $\EE_1$ and $\EE_3$ may come to disagree, consider the simpler case where $\DD$ is not a boundary matrix but an invertible upper triangular array.  In this case, the matrix $\Rirr = \Ri$ returned by Algorithm \ref{alg:revised_lrdec} will be an identity matrix; the corresponding U-match decomposition will be 
$$
I \MM = \DD \DD^{-1}.
$$
By contrast, if we apply Algorithm \ref{alg:revised_lrdec} to $\DD^{\perp}$ then we obtain $ I \MM = (\DD^{\perp})(\DD^{\perp})^{-1}$; taking anti-transposes and rearranging factors, we then obtain a distinct U-match decomposition, 
$$
\DD \MM = \DD I.
$$
In particular, the first approach yields domain and codomain \COB{}s equal to $\DD^{-1}$ and $I$, respectively; the second approach yields  $I$ and $\DD$, respectively.  This motivating example can be turned into a real example by constructing a chain complex that vanishes outside dimensions 1 and 2, whose boundary operator $\partial_2: \Chains_2 \to \Chains_1$ is given by $\DD$.

\begin{remark}
This entire discussion has a natural dual, under anti-transpose symmetry.  The overall flavor is similar to that of the duality described in Theorem \ref{thm:jordanfromumatch}.  
\end{remark}

\end{document}